\begin{document}

\title{\vspace{-.2cm}
Achieving Linear Convergence in Distributed Asynchronous Multi-agent Optimization}

\author{Ye Tian, Ying Sun,   and Gesualdo Scutari\vspace{-.9cm}
\thanks{  Part of this work has been presented at the 56th Annual Allerton Conference 
\cite{Tian_Allerton18} and posted on arxiv \cite{Tian_arxiv} on March 2018.   This work  has been supported   by the USA National Science Foundation
under Grants CIF 1632599 and CIF 1719205; and in part by the Office of
Naval Research under Grant N00014-16-1-2244, and the Army Research Office under Grant W911NF1810238.}
\thanks{The authors are with   the School of Industrial Engineering, 
Purdue University, West-Lafayette, IN, USA; Emails:  \texttt{\{tian110,sun578,gscutari\}@purdue.edu.}}}

\maketitle

\begin{abstract}
 This papers studies  multi-agent (convex and \emph{nonconvex}) optimization over static digraphs. We propose a general    distributed \emph{asynchronous} algorithmic framework whereby   i) agents  can update   their local variables  as well as communicate with their neighbors at any time, without any form of coordination; and  ii) they can perform their local computations  using (possibly) delayed, out-of-sync information from  the other agents.  Delays need not be known to the agent or obey any specific profile, and can also be time-varying (but bounded).   
 The algorithm builds on a  tracking mechanism that is robust against asynchrony (in the above sense), whose goal is to estimate locally the average of agents' gradients. 
 When applied to strongly convex functions,   we prove that it converges at an R-linear (geometric)  rate as long as the step-size is  {sufficiently small}. A sublinear convergence rate is proved, when nonconvex problems and/or diminishing, {\it uncoordinated}  step-sizes are considered. 
  To the best of our knowledge, this is the first distributed algorithm with provable geometric convergence rate in such  a general asynchronous setting. 
  Preliminary numerical results demonstrate the efficacy of the proposed algorithm and validate our theoretical findings.\vspace{-0.2cm}
\end{abstract}

\begin{IEEEkeywords}
 Asynchrony, Delay, Directed graphs, Distributed optimization, Linear convergence, Nonconvex optimization.\vspace{-0.4cm}
\end{IEEEkeywords}

%
\IEEEpeerreviewmaketitle

\section{Introduction}
\label{sec:intro}

We study convex and nonconvex distributed optimization over a network of agents, modeled as a directed fixed graph. Agents aim at cooperatively solving the   optimization problem\vspace{-0.1cm}
\begin{equation}
\tag{P}
  \min_{\bx\in \mathbb{R}^n}   F ( \bx) \triangleq \sum_{i=1}^I f_i\big(\bx\big)  
\label{eq:problem} \vspace{-0.1cm}
\end{equation}
where   $\map{f_i}{\real^{n}}{\real}$ is the cost
function of agent $i$, assumed to be smooth (nonconvex) and known only to agent $i$. In this setting, 
optimization has to be performed in a distributed, collaborative manner:  agents can only receive/send  information from/to its immediate neighbors.  Instances of  \eqref{eq:problem}  that require distributed computing  have found a wide range of applications in different areas, including 
network information processing, 
resource allocation in communication networks, 
swarm robotic, and machine learning, just to name a few. 
 
 Many of the aforementioned applications give rise to extremely large-scale
problems and networks, which naturally call for {\it asynchronous}, {\it parallel} solution methods. In
fact, 
asynchronous modus operandi 
reduces the idle times of workers, mitigate communication and/or memory-access
congestion, save power (as agents need not perform computations and communications at every iteration), and make algorithms more fault-tolerant. 
 In this paper, we consider the following  very general, abstract,   asynchronous model \cite{Bertsekas_Book-Parallel-Comp}:   \begin{description}
    \item[(i)] 	Agents  can perform  their local computations  as well as communicate (possibly in parallel) with their immediate neighbors at any time, without any form of coordination or centralized scheduling; and %
   \item[(ii)] when solving their local subproblems, agents can use outdated information from  their neighbors.  \end{description}
   In (ii) no   constraint is imposed on the delay profiles: delays  can be arbitrary (but bounded),    time-varying, and (possibly)  dependent on the specific activation rules adopted to wakeup the agents in the network.
This model captures in a unified fashion several  {forms} of asynchrony: some  {agents} execute more iterations than others; some agents communicate more frequently than others; and inter-agent communications can be unreliable and/or subject to  unpredictable, time-varying delays. 
 
 Several forms of asynchrony have been studied in the literature--see Sec.~\ref{sec:state_of_the_art} for an overview of   related works. However, 
 we are not aware of any   distributed algorithm that is compliant to  the  asynchrony model (i)-(ii) and distributed (nonconvex) setting above. Furthermore, when considering the special case of a strongly convex function $F$, 
   it is  not clear how to   design    a  (first-order)   {\it distributed asynchronous}  algorithm (as specified above) that achieves {\it linear convergence}  rate.  
 { This paper answers these questions}--
 see Sec.~\ref{contributions} and {Table~1} for a summary  of our  contributions.\vspace{-0.3cm}

\begin{table*}[t]\label{tb:compare_final}\vspace{-0.3cm}
\centering
\resizebox{\textwidth}{!}{
\begin{tabular}{|c | c | c | c | c | c | c | c | c | c | c| }
\hline
\multirow{3}{*}{Algorithm }  &  \multirow{3}{6.5em}{\centering Nonconvex Cost Function}   &\multirow{3}{3.5em}{\centering No Idle Time }     &\multirow{3}{ 5em}{\centering Arbitrary Delays}    &\multirow{3}{*}{ Parallel}     & \multicolumn{2}{c|}{Step Sizes  } & \multirow{3}{*}{Digraph } &\multirow{3}{*}{Global  Convergence  to Exact Solutions} &\multicolumn{2}{c|}{Rate Analysis}  \\                        
\cline{6-7}   \cline{10-11}                                               
&                                      &                                                                                   &                                                                  &                                                                   &    \multirow{2}{*}{Fixed}    &           \multirow{2}{7em}{\centering Uncoordinated Diminishing }        &                        &                              &  \multirow{2}{8em}{Linear Rate for Strongly Convex}  & \multirow{2}{*}{Nonconvex}	\\
                                                                             &                                    &             &             &                                                        &                        &                   &                                        &               &  	& \\
\hline
\hline  
Asyn. Broadcast \cite{nedic2011asynchronous}			& 	                                  &                           &                                & \checkmark                         &\checkmark            & \checkmark                  &                           &   In expectation (w. diminishing step)                      &      & \\ 
\hline  
Asyn. Diffusion \cite{zhao2015asynchronous}			& 	                                  &                           &                                &                          &\checkmark            &                   &                           &                         &    & \\ 
\hline
Asyn. ADMM \cite{kumar2017asynchronous}			& 	 \checkmark                                 &                           &                                 &                          &  \checkmark           &                &                          & Deterministic            &  &   \\ 
\hline
Dual Ascent in \cite{eisen2017decentralized}			&                        	       & \checkmark          &     Restricted                 &     Restricted         & \checkmark              &              &                              &                              &  &  \\ 
\hline
ra-NRC            \cite{bof2017newton}                           &                                      &                            &                                &                           &  \checkmark             &               & \checkmark             &                          &  & \\   
\hline
ARock             \cite{peng2016arock}                     		&                               & \checkmark                 &  Restricted                 &                   &  \checkmark            &                &                         & Almost surely              &  In expectation & \\ 
\hline
ASY-PrimalDual \cite{wu2016decentralized}                    &                                    & \checkmark                 &  Restricted             &                   &  \checkmark           &               &                        & Almost surely                 &  & \\ 
\hline
\textbf{ASY-SONATA}                                              &  \checkmark                  & \checkmark                    & \checkmark             &   \checkmark      &\checkmark           &  \checkmark    &\checkmark               &Deterministic      & Deterministic   &   Deterministic \\ 
\hline
\end{tabular}
}\smallskip

 \caption{ C\MakeLowercase{omparison with state-of-art distributed asynchronous algorithms}. Current schemes can deal with uncoordinated activations but only with some    forms of delays.  ASY-SONATA enjoys all the desirable features listed in the table.} \vspace{-0.3cm}
\vspace{-0.5cm}
\end{table*}

    \subsection{Literature Review}\label{sec:state_of_the_art}
 Since the seminal work 
 \cite{tsitsiklis1986distributed},  asynchronous parallelism has been applied to several    {\it centralized} optimization
algorithms, including block coordinate descent  (e.g., \cite{tsitsiklis1986distributed,liu2015asyspcd,cannelli2016asynchronous}) and  stochastic gradient   (e.g., \cite{Hogwild!,lian2015asynchronous}) methods. However, these schemes are not  applicable to    the networked setup considered in this paper, because they  would require the knowledge of  the    function $F$ from each agent. 
{\it Distributed} methods exploring (some form of) asynchrony   over networks with no centralized node  have been studied in    \cite{NotarnicolaNotarstefanoTAC17,xu2017convergence,iutzeler2013asynchronous,wei20131,bianchi2016coordinate,wang2015cooperative,li2016distributed,Tsianos:ef,Tsianos:en,lin2016distributed,doan2017impact,nedic2011asynchronous,zhao2015asynchronous,kumar2017asynchronous,eisen2017decentralized,bof2017newton,peng2016arock,wu2016decentralized}.   
We group next these works  based upon the  features (i)-(ii) above.  
  
\noindent\textbf{(a)~Random activations and no delays} \cite{NotarnicolaNotarstefanoTAC17,xu2017convergence,iutzeler2013asynchronous,wei20131,bianchi2016coordinate}: These schemes considered distributed {\it convex} unconstrained optimization over {\it undirected} graphs. While substantially different in the form of the updates performed by the agents--\cite{NotarnicolaNotarstefanoTAC17,iutzeler2013asynchronous,bianchi2016coordinate} are   instances of  primal-dual (proximal-based) algorithms, \cite{wei20131} is an ADMM-type algorithm, while  \cite{xu2017convergence}  is based on the  distributed gradient tracking mechanism introduced in\cite{DiLorenzoCAMSAP15,xu2015augmented,di2016next}--all these algorithms   are asynchronous in the sense of feature (i) [but not (ii)]: at each iteration, a subset of agents \cite{NotarnicolaNotarstefanoTAC17,iutzeler2013asynchronous,bianchi2016coordinate} (or edge-connected agents \cite{wei20131,xu2017convergence}), chosen at random, is activated,  performing then  their updates and communications with their immediate neighbors; between two activations, agents are  assumed to be  in {\it idle} mode (i.e., able to {\it continuously} receive information).  However,  {\it no form of  delays} is allowed:  every agent must perform its local computations/updates using the {\it most updated} information from its neighbors.   
This means that  all the actions performed by the agent(s) in   an activation must be completed before a new activation (agent)   takes place (wakes-up), which calls for some coordination among the agents. 
 {Finally,   no convergence rate was  provided for the aforementioned schemes but  \cite{wei20131,xu2017convergence}.}  \\   
  \noindent\textbf{(b)~Synchronous activations  and delays} \cite{wang2015cooperative,li2016distributed,Tsianos:ef,Tsianos:en,lin2016distributed,doan2017impact}: 
  These schemes considered  distributed constrained {\it convex} optimization over {\it undirected} graphs. They study the impact of delayed gradient information 
  \cite{wang2015cooperative,li2016distributed} or   communication delays (fixed \cite{Tsianos:ef}, uniform \cite{doan2017impact,li2016distributed} or time-varying \cite{Tsianos:en,lin2016distributed}) on the convergence rate of distributed gradient (proximal \cite{wang2015cooperative,li2016distributed} or projection-based \cite{lin2016distributed,doan2017impact}) algorithms or  dual-averaging distributed-based  schemes \cite{Tsianos:ef,Tsianos:en}. While these schemes are all synchronous [thus lacking of feature (i)], they can tolerate {\it communication delays} [an instantiation of feature (ii)],  converging at a {\it sublinear rate} to an optimal solution.   
Delays   must be  such that no  losses   occur--every agent's message  {will eventually} reach its destination within a finite {time}. \\
  \noindent\textbf{(c) Random/cyclic activations and some form of delays} \cite{nedic2011asynchronous,zhao2015asynchronous,kumar2017asynchronous,eisen2017decentralized,bof2017newton,peng2016arock,wu2016decentralized}:  The class of optimization problems along with the key features of the algorithms proposed  in these papers  are summarized in {Table~1} and briefly discussed next. The majority of these  works studied distributed (strongly) {\it convex} optimization over {\it undirected} graphs, with   \cite{zhao2015asynchronous} assuming that all the functions $f_i$ have the same minimizer, \cite{kumar2017asynchronous} considering also nonconvex objectives,  and \cite{bof2017newton} being implementable also over digraphs. The algorithms in 
  \cite{nedic2011asynchronous,zhao2015asynchronous} are  gradient-based schemes; \cite{kumar2017asynchronous} is a decentralized instance of  ADMM; \cite{peng2016arock}   applies an asynchronous parallel ADMM scheme  to distributed  optimization; and \cite{wu2016decentralized} builds on a primal-dual method. The schemes in  \cite{eisen2017decentralized,bof2017newton} instead build on (approximate) second-order information. All these algorithms are asynchronous in the sense of feature (i): \cite{nedic2011asynchronous,zhao2015asynchronous,kumar2017asynchronous, peng2016arock,wu2016decentralized} considered random activations of the agents (or edges-connected agents) while \cite{eisen2017decentralized,bof2017newton} studied deterministic, uncoordinated activation rules. As far as feature (ii) is concerned, some    form of delays is allowed. 
 %
 %
   More specifically, \cite{nedic2011asynchronous,zhao2015asynchronous,kumar2017asynchronous,bof2017newton} can deal    with {\it packet losses}:   the information sent by an agent to its neighbors either gets lost or received  with {\it no delay}. They also assume  that agents are {\it always in idle mode} between two activations. Closer to the proposed asynchronous framework  are the schemes in  \cite{peng2016arock,wu2016decentralized} wherein a probabilistic model   is employed  to describe the activation of the agents and the aged information used in their updates.   The
 model  requires that the random variables triggering  the
activation of the agents are i.i.d and {\it independent} of  the delay vector used by the agent to performs its update. 
While this assumption makes the  convergence analysis possible, in reality, there is a strong dependence of the delays   
on the activation index; see \cite{cannelli2016asynchronous} for a detailed discussion on this issue and several
   counter examples. Other consequences of this model are: the schemes \cite{peng2016arock,wu2016decentralized} are {\it not parallel}--only one agent per time can perform the update--and a random self-delay must be used  in the update of each agent (even if agents have access to their most recent information).  Furthermore, \cite{peng2016arock} calls for the solution of a convex subproblem for each agent at every iteration. Referring to the convergence rate,  \cite{peng2016arock} is the only scheme exhibiting linear convergence  {\it in expectation}, 
   when each $f_i$ is strongly convex and the graph  {\it undirected}. 
   No convergence rate is available in any of the aforementioned papers, when $F$  is nonconvex. \vspace{-.4cm}

 \subsection{Summary of Contributions}\label{contributions}\vspace{-.1cm} 
 This paper proposes a general distributed, asynchronous   algorithmic framework   for (strongly) convex and {\it nonconvex} instances of Problem~\eqref{eq:problem}, over {\it directed} graphs. The algorithm leverages  a  perturbed ``sum-push'' mechanism that is robust against asynchrony, whose goal is to track locally the average of agents' gradients; this scheme along with its convergence analysis are of independent interest. 
 To the best of our knowledge, the proposed framework is  the first  scheme combining the following attractive features (cf. {Table~1}): 
(a) it is {\it parallel and asynchronous [in the sense (i) and (ii)]}--multiple agents can be activated at the same time (with no coordination) and/or outdated information can be used in the agents' updates; our  asynchronous setting (i) and (ii) is  less restrictive than the one in \cite{peng2016arock,wu2016decentralized}; furthermore, in contrast with \cite{peng2016arock}, our scheme avoids solving possibly complicated subproblems; (b) it is applicable to  {\it nonconvex} problems, with probable convergence to  stationary solutions of \eqref{eq:problem};  (c) it is implementable over {\it digraph};  (d) it employs either a constant step-size  or {\it uncoordinated} diminishing ones; (e) it  \textit{converges at an R-linear rate} (resp. sublinear)   when $F$ is strongly convex  (resp. nonconvex) and a constant (resp. diminishing, uncoordinated) step-size(s)
 is employed; this contrasts \cite{peng2016arock} wherein each $f_i$ needs to be strongly convex; and (f) it is ``protocol-free'', meaning that  agents need not obey any  specific   communication protocols or  asynchronous modus operandi (as long as delays are bounded and agents  update/communicate uniformly infinitely often). 
 
 On the technical side, convergence is studied introducing two techniques of independent interest, namely: i)  the asynchronous agent system   is reduced to   a synchronous ``augmented'' one with no delays by  adding  virtual agents to the graph.  While this idea was first explored in  \cite{dominguez2011distributed1,nedic2010convergence},\cite{lin2013constrained}, the proposed enlarged system and algorithm   differ from   those used therein, which cannot deal with the general asynchronous model considered here--see   Remark \ref{rmk_augmented_graph}, Sec.\ref{sec:pertavg}; and ii) the rate analysis is employed putting forth a generalization of the small gain theorem (widely used in the literature \cite{Nedich-geometric}   to analyze synchronous schemes), which  is expected to be broadly applicable to other distributed algorithms.\vspace{-0.2cm}
 
 \subsection{Notation} 
Throughout the paper we use  the following notation.  Given the matrix   $\mathbf{M}\triangleq (M_{ij})_{i,j=1}^I$, 
$\mathbf{M}_{i,:}$ and $\mathbf{M}_{:,j}$ denote its $i$-th row vector and $j$-th column vector. Given the  sequence $\{ \bd{M}^t \}_{t=s}^k$, with $k\geq s$, we define   $\bd{M}^{k:s} \triangleq \bd{M}^k \bd{M}^{k-1} \cdots \bd{M}^{s+1} \bd{M}^s$, if $k>s$; and $\bd{M}^{k:s} \triangleq   \bd{M}^s$ otherwise. Given two matrices (vectors) $\mathbf{A}$ and $\mathbf{B}$ of same size,  by $\mathbf{A} \preccurlyeq \mathbf{B}$ we mean  that $\mathbf{B}-\mathbf{A}$   is a nonnegative matrix (vector). The dimensions of the all-one vector $\bd{1}$ and the $i$-th canonical vector $\mathbf{e}_i$ will be clear from the context. We use $\norm{\cdot}$ to represent the Euclidean norm for a vector whereas the spectral norm for a matrix.   
The indicator function $\mathbbm{1}[E]$ of an event $E$ equals to $1$ when the event $E$ is true,  and $0$ otherwise. 
Finally, we use the convention  $\sum_{t \in \emptyset} x^t = 0$ and $\prod_{t \in \emptyset} x^t = 1$. 
 
 
 
 
 \section{Problem Setup and Preliminaries}\label{sec:Setup}
 \subsection{Problem Setup}\label{sec:Problem_setup}
 We study Problem~\eqref{eq:problem} under the following   assumptions.   \vspace{-0.1cm}\begin{assumption}[On the optimization problem]\mbox{}
\begin{enumerate} 
  \item Each $f_i:\mathbb{R}^{n}\rightarrow \mathbb{R}$  is proper, closed and  $L_i$-Lipschitz differentiable;
  \item $F$ is bounded from below.~\oprocend
\end{enumerate}
\label{ass:cost_functions}\vspace{-0.1cm}
\end{assumption}
Note that $f_i$ need not be convex. We also make the blanket   assumption  that each agent $i$ knows only its own  $f_i$,  but not $\sum_{j\neq i} f_j$.  To state linear convergence, we will use  the following extra condition on the objective function. \vspace{-0.1cm}
  \begin{assumption}[Strong convexity] Assumption~\ref{ass:cost_functions}{(i)} holds  and, in addition,  
  $F$  is $\tau$-strongly convex.~\oprocend 
\label{ass:cost_functions_strongly_convex}\vspace{-0.1cm}
\end{assumption} 
 \noindent \textbf{On the communication network:} The  communication network of the agents is  modeled as a  fixed, directed
graph $\GG = (\mathcal{V},\EE)$, where $\mathcal{V}=\until{I}$ is the set of nodes (agents), and $\EE\subseteq \mathcal{V} \times \mathcal{V}$
is the set of edges ({communication} links). If  $(i,j)\in \EE$, it means that agent $i$
can send information to agent $j$. We assume that the digraph does not have self-loops.
We denote by $\nbrs_i^{\text{in}}$ the set of \emph{in-neighbors} of node $i$, i.e.,  
$\nbrs_i^{\text{in}} \triangleq \left\{j \in \mathcal{V} \mid (j,i) \in \EE \right\}$ while   $\nbrs_i^{\text{out}} \triangleq \left\{j \in \mathcal{V} \mid (i,j) \in \EE \right\}$
is the   set of \emph{out-neighbors} of agent $i$. 
We make the following standard assumption on the graph connectivity. 
\begin{assumption}
The graph $\GG$ is strongly connected.~\oprocend
\label{ass:strong_conn}
\end{assumption} 
 \vspace{-0.4cm}

 \subsection{Preliminaries: The SONATA algorithm \cite{sun2016distributed,YingMAPR}}\label{sec:SONATA}
The proposed asynchronous algorithmic framework   builds on the   synchronous SONATA algorithm,    proposed in~\cite{sun2016distributed,YingMAPR}  to solve (nonconvex) multi-agent optimization problems over time-varying digraphs. This is motivated by the fact that SONATA has the unique property of being provably applicable to both convex and nonconvex problems, and it achieves linear convergence when applied to strongly convex objectives $F$. We thus begin reviewing  SONATA, tailored to~\eqref{eq:problem};   then we generalized it  to the asynchronous setting (cf. Sec.~\ref{sec:Asy-SONATA}).

Every agent controls and iteratively updates the tuple  $(\bx_i, \by_i, {\mathbf{z}_i},\phi_i)$: $\bx_i$ is agent $i$'s copy of the shared variables $\bx$ in \eqref{eq:problem};  $\by_i$ acts as a local proxy of the sum-gradient  {$\nabla F$}; and     $\mathbf{z}_i$ and $\phi_i$  are auxiliary variables instrumental to deal with communications over  digraphs. Let $\bx_i^k, \mathbf{z}_i^k$, $\phi_i^k$, and $\mathbf{y}_i^k$ denote  the value of the aforementioned variables at iteration $k\in \mathbb{N}_0$. The  update of each agent $i$  reads: \vspace{-0.1cm}
\begin{align}
	\bx_i^{k+1} &= \sum_{j \in \mathcal{N}_i^{\text{in}}\cup \{i\}} w_{ij} \,{{ {\left(\bx_{j}^{k}-\alpha^k \,\by_{j}^{k}\right)}}   },  \label{eq:SONATA_x_update}\\
	\mathbf{z}_i^{k+1} & = \sum_{j \in \mathcal{N}_i^{\text{in}}\cup \{i\}} a_{ij}  \mathbf{z}_j^k + \nabla f_i(\bx_i^{k+1}) -  \nabla f_i(\bx_i^{k}) , \label{eq:SONATA_s_update} \\
	\phi_i^{k+1} & = \sum_{j \in \mathcal{N}_i^{\text{in}}\cup \{i\}} a_{ij} \phi_j^k,  \label{eq:SONATA_phi_update}\\
	\mathbf{y}_i^{k+1} & = \mathbf{z}_i^{k+1}/\phi_i^{k+1}, \label{eq:SONATA_y_update}
\end{align}
with 
{$\mathbf{z}_i^0 = \by_i^0=\nabla f_i (\bx_i^0)$} and $\phi_i^0 = 1$, for all $i\in \mathcal{V}$. In \eqref{eq:SONATA_x_update}, {$\by_i^k$ is a local estimate of the average-gradient $({1}/{I}) \sum_{i=1}^I \nabla f_i(\bx_i^k)$.} Therefore, every agent, first moves along the estimated gradient direction, generating $\bx_i^{k}-\alpha^k \,\by_i^k$ ($\alpha^k$ is the step-size); and then performs a consensus step to   {force} asymptotic agreement among the local variables $\bx_i$. Steps (\ref{eq:SONATA_s_update})-(\ref{eq:SONATA_y_update}) represent a perturbed-push-sum update, aiming at tracking the   gradient {$({1}/{I})\,\nabla F$} \cite{di2016next, YingMAPR, xu2015augmented}.  The weight-matrices $\mathbf{W}\triangleq (w_{ij})_{i,j=1}^I$ and $\mathbf{A}\triangleq (a_{ij})_{i,j=1}^I$ satisfy the following standard assumptions. \vspace{-0.1cm}  
\begin{assumption}[On the weight-matrices]\label{assumption_weights} The weight-matrices $\mathbf{W}\triangleq (w_{ij})_{i,j=1}^I$ and $\mathbf{A}\triangleq (a_{ij})_{i,j=1}^I$ satisfy (we will write $\mathbf{M}\triangleq (m_{ij})_{i,j=1}^I$ to denote either $\mathbf{A}$ or $\mathbf{W}$):
	\begin{enumerate} 
  \item 
 {    {$\exists\, \bar{m}>0$} such that $m_{ii} \geq \bar{m}$,  for all $i \in \mathcal{V}$; and $m_{ij} \geq \bar{m}$,  for all $(j,i) \in \mathcal{E}$; $m_{ij}=0$, otherwise; }
  \item $\mathbf{W}$ is row-stochastic, that is,  $\mathbf{W}\,\mathbf{1}=\mathbf{1}$;
  \item $\mathbf{A}$ is column-stochastic, that is,  $\mathbf{A}^T\,\mathbf{1}=\mathbf{1}$;
~\oprocend
\end{enumerate}
\end{assumption}\smallskip

 In \cite{Nedich-geometric},  a special instance  of SONATA, was proved to converges  at an R-linear  rate when $F$ is strongly convex.  This result was   extended to constrained, nonsmooth (composite), distributed optimization in  \cite{DanSunScutari-complexity}. 
A natural question is   whether   SONATA   works also in an asynchronous setting still converging at 
a linear  rate. 
Naive asynchronization   of    the  updates \eqref{eq:SONATA_x_update}-(\ref{eq:SONATA_y_update})--such as using uncoordinated  activations and/or replacing instantaneous information with a  delayed one--would  not work. 
For instance, the   tracking    (\ref{eq:SONATA_s_update})-(\ref{eq:SONATA_y_update}) calls for the  invariance of the  averages, i.e., $\sum_{i=1}^I \mathbf{z}^{k}_i= \sum_{i=1}^I \nabla f_i(\mathbf{x}^{k})$, for all $k\in \mathbb{N}_0$.   It is not difficult to  check that any perturbation   in (\ref{eq:SONATA_s_update})-e.g., in the form of delays or packet losses--puts in  jeopardy this property. 

     To cope with the above challenges, a first step is robustifying the gradient  tracking scheme. In Sec.~\ref{sec:Asy-Sum-Push},   we introduce  {\PertAvgname/}--an asynchronous, perturbed, instance of the  push-sum algorithm \cite{Kempe_PushSum03}, which serves as a unified algorithmic framework to accomplish several  tasks over digraphs in an asynchronous manner, such as solving the average consensus problem and  tracking the average of agents' time-varying signals. 
     Building on {\PertAvgname/}, in Sec.~\ref{sec:Asy-SONATA}, we  finally present the proposed distributed asynchronous optimization framework, termed  ASY-SONATA. 
     \vspace{-0.2cm}
      
  \section{Perturbed  Asynchronous Sum-Push}\label{sec:Asy-Sum-Push} 
We present  {\PertAvgname/}; 
the algorithm was first introduced in our conference paper \cite{Tian_Allerton18}, which we refer to for   details on the genesis of the scheme and intuitions; 
here we directly introduce the scheme and study its convergence. \\\indent 
 Consider an asynchronous setting wherein     
 agents compute and communicate independently  without coordination. 
 Every agent $i$ maintains state variables   $\bz_{i}$, $\phi_i$, $\by_i$, along with the following auxiliary  variables that are instrumental to deal with uncoordinated activations and delayed information: i) the cumulative-mass variables $\bm \rho_{ j i}$ and  $\sigma_{ j i}$, with $j\in \mathcal{N}_i^{\text{out}}$, which capture the cumulative (sum) information generated by agent $i$ up to the current time and to be sent to   agent   $j\in \mathcal{N}_i^{\text{out}}$; consequently,   $\bm \rho_{ ij}$ and  $\sigma_{ij}$ are received by $i$ from  its in-neighbors   $j\in \mathcal{N}_i^{\text{in}}$; and ii) the buffer variables  $\tilde{\bm \rho}_{ i j}$ and $\tilde{\sigma}_{ i j}$, with $j\in \mathcal{N}_i^{\text{in}}$,  which store the information sent from $j\in \mathcal{N}_i^{\text{in}}$ to $i$ and used by $i$ in its last update. Values of these variables at iteration $k\in \mathbb{N}_0$ are denoted by the same symbols with  the superscript ``$k$''. Note that, because of the asynchrony, each agent $i$ might have  outdated  $\bm \rho_{ij}$ and $\sigma_{ij}$;  $\bm \rho_{ij}^{k-d_j^k}$ (resp.  $\sigma_{ij}^{k-d_j^k}$)  is a delayed version of the current  $\bm\rho_{ij}^{k}$ (resp. $\sigma_{ij}^{k}$) owned by $j$ at time $k$, 
 where  
  $0\leq d_{j}^k\leq D<\infty$ is the delay. Similarly,   $\tilde{\bm \rho}_{ij}$ and  $\tilde{\sigma}_{ij}$ might differ from the last information generated by $j$ for $i$, because agent $i$ might not have received that information yet (due to delays)  or never will (due to packet losses).   \\ \indent The proposed asynchronous algorithm,     \PertAvgname/,    is summarized in Algorithm~\ref{alg:pertavg}. 
  A global iteration clock (not known to the agents) is introduced:  $k\to k+1$ is triggered 
 based upon the completion from  one agent, say  $i^k$,  of the following   actions. 
\textbf{{(S.2)}:}   agent $i^k$ maintains a local variable  {$\tau_{i^k j}$}, for each $j\in \mathcal{N}_{i^k}^{\text{in}}$, which  keeps track of the ``age'' (generated time) of the $({\bm  \rho}, \sigma)$-variables that it has received from its  in-neighbors and {\it already} used. 
   If  $k - d_j^k$ is larger than the current counter   
   $\tau_{i^k j}^{k-1}$, indicating that the received $({\bm \rho},\sigma)$-variables are newer than those currently stored, agent  $i^k$   accepts ${\bm \rho}_{i^k j}^{k - d_j^k}$ and ${\sigma}_{i^k j}^{k - d_j^k}$, and updates $\tau_{i^k j}$ as $k - d_j^k$; otherwise, the variables will be discarded and     $\tau_{i^k j}$ remains unchanged.   Note that  \eqref{eq:purge_old} can be performed without any coordination. It is sufficient that each agent attaches a time-stamp to its produced information     reflecting it local timing counter.   We describe next the other steps, assuming that new information has come in to agent $i^k$, that is, $\tau_{i^k j}=k - d_j^k$.
\begin{algorithm}[t]
\caption{\PertAvgname/ (Global View)}\label{alg:pertavg}
  \begin{algorithmic}
    \StatexIndent[0] \textbf{Data:}   $\mathbf{z}_i^0 \in \mathbb{R}^n$, $\phi_i^0 = 1$,  {$\tilde{\boldsymbol{\rho}}_{ij}^0 = 0$,}  $ \tilde{\sigma}_{ij}^0 = 0$, $\tau_{i j}^{-1} = -D$, for all  $j \in \mathcal{N}_i^{\text{in}}$ and $i\in \mathcal{V}$;   $\sigma_{ij}^t = 0$ and $\boldsymbol{\rho}_{ij}^t = 0$,   for all  $t = -D,  \ldots, 0$; and  $\{{\bm \epsilon}^k \}_{k\in \mathbb{N}_0}$.  Set $k = 0$.  
\While{a termination criterion is not met}
    \State  \texttt{(S.1)}  Pick $(i^k, \bd{d}^k)$, with $\mathbf{d}^k \triangleq (d_{j}^k)_{j \in \mathcal{N}_{{i}^k}^{\text{in}}}$;
    \State \texttt{(S.2)} Set (purge out the old information):
	\begin{align}\label{eq:purge_old}
        \tau_{i^k j}^k = \max\big(\tau_{i^k j}^{k-1}, k- d_j^k\big), \quad \forall j \in \mathcal{N}_{i^k}^{\text{in}};
        \end{align}
   \State \texttt{(S.3)}  Update the variables performing
   \begin{itemize}[leftmargin=1cm] \small 
     \item  \texttt{(S.3.1)} \textbf{Sum step:}
	\begin{align} 
		& \label{eq:pertavg_sum} \bz_{i^k}^{k+\frac{1}{2}}  = \bz_{i^k}^{k}  +  \displaystyle\sum_{j \in \mathcal{N}_{i^k}^{\text{in}}}  \left({\bm \rho}_{{i^k}j}^{\tau_{i^k j}^k }-\tilde{\bm \rho}_{i^kj}^k\right) + \bm{\epsilon}^k \\
		& \phi_{i^k}^{k+\frac{1}{2}}  = \phi_{i^k}^{k}  +  \displaystyle\sum_{j \in \mathcal{N}_{i^k}^{\text{in}}}  \left({\sigma}_{{i^k}j}^{\tau_{i^k j}^k }-\tilde{\sigma}_{i^kj}^k\right) \nonumber \end{align}\vspace{-0.3cm}
     \item \texttt{(S.3.2)} \textbf{Push step:}
		\begin{align}	
		& \bz_{i^k}^{k+1}    =  a_{i^ki^k}\, \bz_{i^k}^{k+\frac{1}{2}}  , \quad \phi_{i^k}^{k+1}    =  a_{i^ki^k}\, \phi_{i^k}^{k+\frac{1}{2}} \nonumber\\
		& {\bm \rho}_{j {i^k}}^{k+1}   = {\bm \rho}_{j {i^k}}^{k} + a_{j {i^k}} \,\bz_{i^k}^{k+\frac{1}{2}}, \label{eq:alg_rho}\\
		& {\sigma}_{j {i^k}}^{k+1}   = {\sigma}_{j {i^k}}^{k} + a_{j {i^k}} \,\phi_{i^k}^{k+\frac{1}{2}},\quad \forall j\in \mathcal{N}_{i^k}^{\text{out}}   \nonumber \end{align}		
     \item \texttt{(S.3.3)} \textbf{Mass-Buffer update:}
		\begin{align}	
		& \tilde{\bm \rho}_{{i^k}j}^{k+1} = {\bm \rho}_{{i^k}j}^{\tau_{i^k j}^k },  \quad   \tilde{\sigma}_{{i^k}j}^{k+1} = {\sigma}_{{i^k}j}^{\tau_{i^k j}^k } ,   { \quad \,\ \forall j \in \mathcal{N}_{i^k}^{\text{in}}} \label{eq:alg_buffer}
	\end{align}
	\item \texttt{(S.3.4)} \textbf{Set: }  $\quad \by_{i^k}^{k+1} =  \bz_{i^k}^{k+1} / \phi_{i^k}^{k+1} .$
	\end{itemize}
	 \State \texttt{(S.4)} \parbox[t]{\dimexpr\linewidth-\algorithmicindent}{ { Untouched state variables shift to state $k+1$ \\ while keeping the same value;    $k\leftarrow k+1$}.\strut}
\EndWhile
  \end{algorithmic} 
\end{algorithm} 
\textbf{{(S.3.1)}:}   In (\ref{eq:pertavg_sum}), agent $i^k$ builds the intermediate ``mass'' $\bz_{i^k}^{k+\frac{1}{2}}$ based upon its current information $\bz_{i^k}^{k}$ and $\tilde{\bm \rho}_{i^kj}^k$, and the (possibly) delayed one from its in-neighbors,  ${\bm \rho}_{{i^k}j}^{k-d_{j}^k}$; and     $\bm{\epsilon}^k \in \mathbb{R}^n$ is an exogenous perturbation (later this perturbation will be properly chosen to accomplish specific goals, see Sec.~\ref{sec:Asy-SONATA}).  Note that    the way  agent $i^k$   forms its
own estimates  $\bm \rho_{i^k j}^{k-d_j^k}$ is {\it immaterial} to the description of the
algorithm. 
The local buffer   $\tilde{\bm \rho}_{i^kj}^k$   stores the value of ${\bm \rho}_{i^kj}$ that agent $i^k$ used in its last update.  Therefore, if the information in  ${\bm \rho}_{i^kj}^{k-d_{j}^k}$ is not older than the one in  $\tilde{\bm \rho}_{i^kj}^{k}$, the    difference ${\bm \rho}_{i^kj}^{k-d_{j}^k} - \tilde{\bm \rho}_{i^kj}^{k}$ in (\ref{eq:pertavg_sum}) will capture  the  sum of the $a_{i^k j}\mathbf{z}_j$'s that  have been generated  by $j\in \mathcal{N}_{i^k}^{\text{in}}$ for $i^k$ up until $k-d_{j}^k$ 
and  not used   by agent $i^k$ yet. 
For instance, in a synchronous setting, one would have  {${\bm \rho}_{i^k j}^{{k}} - \tilde{\bm \rho}_{i^kj}^{k}=a_{i^kj}\mathbf{z}_j^{k+\frac{1}{2}}$}.  
\textbf{{(S.3.2)}:}    the generated   $\bz_{i^k}^{k+\frac{1}{2}}$   is   ``pushed back''  to    agent $i^k$ itself and its out-neighbors. Specifically,  out of  the total mass $\bz_{i^k}^{k+\frac{1}{2}}$ generated, agent $i^k$ gets $a_{ii}\,\bz_i^{k+\frac{1}{2}}$,   determining the update $\bz^k_i \to \bz_i^{k+1}$ while the remaining is allocated to the agents $j\in \mathcal{N}_{i^k}^{\text{out}}$, with  $\bm a_{ji^k}\,\bz_{i^k}^{k+\frac{1}{2}}$ cumulating to the  mass buffer  $\bm \rho_{ji^k}^{k}$ and  generating the update $\bm \rho_{ji^k}^{k}\to \bm \rho_{ji^k}^{k+1}$, to be sent to agent $j $. \textbf{{(S.3.3)}:}   each local buffer variable  $\tilde{\bm \rho}_{i^k j}^k$ is updated to account for the use of new information from $j\in\mathcal{N}_{i^k}^{\text{in}}$.  The final information is then read on the $\by$-variables [cf. \textbf{{(S.3.4)}}].   
\begin{remark}(Global view description) \emph{ Note that    each  agent's update     is fully defined, once $i^k$ and  {$\mathbf{d}^k$} are given.  
The selection $(i^k,\mathbf{d}^k)$ in \textbf{(S.1)} is not performed by anyone;   it is instead an {\it a-posteriori} description of    agents' actions: All agents act asynchronously and continuously; the agent completing the ``push'' step and updating its own variables triggers {\it retrospectively} the iteration counter  $k\to k+1$ and determines the pair  $(i^k,\mathbf{d}^k)$  along with  all quantities involved in the other steps. Differently from most of the current literature, this ``global view'' description of the   agents' actions   allows us  to 
 abstract from  specific 
 computation-communication protocols and asynchronous modus operandi and captures by a unified model a gamut of asynchronous schemes.  }   \end{remark}

  Convergence is given under the following assumptions. \vspace{-0.1cm}
\begin{assumption}[On the asynchronous model] \label{ass:delays}Suppose:
\begin{enumerate}
\item  $\exists$  $0<T<\infty$ such that  {$\cup_{t=k}^{k+T-1} i^t=\mathcal{V}$}, for all $k\in \mathbb{N}_0$; 	 
\item  $\exists$ $0<D<\infty$ such that $0 \leq d_j^k\leq D$, for all $j\in \mathcal{N}_{i^k}^{\text{in}}$ and $k\in \mathbb{N}_0$.\hfill $\square$
\end{enumerate} 
\end{assumption}
  
 The next  theorem   studies convergence  of  \PertAvgname/,  establishing geometric decay of the error $\|\mathbf{y}_i\spe{k} - (1 / I) \cdot \mathfrak{m}_z^{k}\|$, even in the presence of unknown (bounded) perturbations, where  $
\mathfrak{m}^{k}_{z} \triangleq  \sum_{i=1}^I \bd{z}_i^{k} + \sum_{(j,i)\in \mathcal{E}} (\bm{\rho}_{ij}^{k} - \tilde{\bm{\rho}}_{ij}^{k})$ represents   the ``total mass''  of the system at iteration $k$.\vspace{-0.1cm}

\begin{theorem}\label{thm:track}
 {Let $\{\mathbf{y}^k \triangleq [\mathbf{y}_1^k, \ldots,\mathbf{y}_I^k]^\top,$ $\mathbf{z}^k \triangleq [\mathbf{z}_1^k, \ldots,\mathbf{z}_I^k]^\top,$  $( \bm{\rho}_{ij}^{k}, \tilde{\bm{\rho}}_{ij}^{k})_{(j,i) \in \mathcal{E}} \}_{k \in \mathbb{N}_0}$  }be the sequence generated by Algorithm~\ref{alg:pertavg}, under Assumption~\ref{ass:strong_conn}, \ref{ass:delays}, and with   $\mathbf{A}\triangleq (a_{ij})_{i,j=1}^I$ satisfying Assumption~\ref{assumption_weights} (i),(iii). 
Define $K_1 \triangleq (2\,I-1)\cdot T+ I\cdot D .$  There exist   constants $\rho \in (0,1) $ and    $C_1>0$,  such that      \vspace{-0.1cm}
\begin{equation}\label{thm:track:eq}
\Big\|{\mathbf{y}_i\spe{k+1} - (1 / I) \cdot \mathfrak{m}_z^{k+1}}\Big\|\leq C_1 \left(\rho^{k}\norm{\bd{z}\spe{0}}+\sum_{l=0}^k \rho^{k-l}\norm{\bm{\epsilon}^l} \right),
\end{equation}  
for all $i\in\mathcal{V}$ and $k\geq K_1-1$. 

Furthermore,   $\mathfrak{m}^{k}_z = \sum_{i=1}^I {\bd{z}_i^0} + \sum_{t=0}^{k-1} \bm{\epsilon}^t$.
\end{theorem}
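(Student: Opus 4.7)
The plan is to split the proof into two independent pieces: an algebraic mass-conservation identity and a geometric contraction bound obtained via a graph-augmentation reduction.

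For the mass-conservation formula $\mathfrak{m}_z^{k} = \sum_i \mathbf{z}_i^0 + \sum_{t=0}^{k-1}\boldsymbol{\epsilon}^t$, I would induct on $k$. At iteration $k$ only agent $i^k$ acts. The key observation is that \texttt{(S.3.2)} and \texttt{(S.3.3)} jointly preserve $\mathfrak{m}_z^k$: column-stochasticity of $\mathbf{A}$ yields $a_{i^k i^k} + \sum_{j \in \mathcal{N}_{i^k}^{\text{out}}} a_{j i^k} = 1$, so the mass $\mathbf{z}_{i^k}^{k+1/2}$ is exactly split between $\mathbf{z}_{i^k}^{k+1}$ and the increments of $\bm{\rho}_{j i^k}$ broadcast to the out-neighbors, while \texttt{(S.3.3)} overwrites $\tilde{\bm{\rho}}_{i^k j}$ with $\bm{\rho}_{i^k j}^{\tau_{i^k j}^k}$, exactly cancelling the $(\bm{\rho}-\tilde{\bm{\rho}})$ contribution that was absorbed into $\mathbf{z}_{i^k}$ by \texttt{(S.3.1)}. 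The only net change is the exogenous perturbation $\boldsymbol{\epsilon}^k$ added in \texttt{(S.3.1)}, giving $\mathfrak{m}_z^{k+1} = \mathfrak{m}_z^k + \boldsymbol{\epsilon}^k$; telescoping finishes this half.

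For the geometric bound, my plan is to embed the asynchronous, delayed dynamics into a synchronous, delay-free linear recursion on an augmented graph, in the spirit of \cite{dominguez2011distributed1,nedic2010convergence,lin2013constrained} but enriched to accommodate the present asynchrony model. Each time-stamped message still in flight along an edge $(j,i)$ is represented by a virtual node whose state holds the in-transit mass, so that the buffer pair $(\bm{\rho}_{ij},\tilde{\bm{\rho}}_{ij})$ is reconstructed from a chain of at most $D{+}1$ virtual nodes. Under Assumption~\ref{ass:delays} the enlarged state satisfies $\tilde{\mathbf{z}}^{k+1} = \tilde{\mathbf{A}}^k\,\tilde{\mathbf{z}}^k + \tilde{\boldsymbol{\epsilon}}^k$, where $\tilde{\mathbf{A}}^k$ is column-stochastic with positive diagonal and nonzero entries uniformly bounded below by $\bar{m}$; moreover, over any window of length $K_1 = (2I{-}1)T + I\cdot D$, the product $\tilde{\mathbf{A}}^{k+K_1-1:k}$ has a strictly positive column on every real-agent row, i.e.\ the chain is uniformly weakly ergodic on the relevant block. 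The same primitivity window, applied to the all-ones initialization of $\phi$, gives a uniform lower bound $\phi_i^k \geq \phi_{\min} > 0$ for $k \geq K_1 - 1$. A standard coefficient-of-ergodicity estimate then yields geometric contraction of $\mathbf{z}_i^k/\phi_i^k - \mathfrak{m}_z^k/I$ with some rate $\rho \in (0,1)$, and summing the resulting input-to-state bound over the perturbation sequence produces \eqref{thm:track:eq} with an explicit $C_1$ depending on $\phi_{\min}$, $\bar{m}$, $I$, $T$, $D$.

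The main technical obstacle is the augmented-graph construction itself. Unlike the reductions used for packet drops in \cite{dominguez2011distributed1,nedic2010convergence} or simpler delay models in \cite{lin2013constrained}, here one must simultaneously encode (i) uncoordinated wake-ups, (ii) per-link delays that can be arbitrarily correlated with the activation schedule, and (iii) the one-sided \emph{filtering} performed in \texttt{(S.2)}, where $i^k$ discards a received message if $k - d_j^k \leq \tau_{i^k j}^{k-1}$. Careful bookkeeping is required so that $\tilde{\mathbf{A}}^k$ remains column-stochastic at every iteration despite this filtering and so that the primitivity window $K_1$ is uniform in the schedule; the quoted value $K_1 = (2I-1)T + I\cdot D$ should emerge naturally from combining $I\cdot D$ steps to flush the virtual-node chains with $(2I-1)T$ steps needed to propagate every agent's state to every other agent through the real graph under Assumption~\ref{ass:delays}(i). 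Once the reduction is in place, everything else reduces to standard perturbed push-sum rate estimates.
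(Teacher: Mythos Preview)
Your proposal is correct and follows essentially the same route as the paper: the mass-conservation identity is proved by the induction you describe (the paper carries out exactly this cancellation between \texttt{(S.3.1)}, \texttt{(S.3.2)}, \texttt{(S.3.3)} using column-stochasticity of $\mathbf{A}$), and the geometric bound is obtained by the edge-based augmented-graph reduction you outline, followed by a scrambling/ergodicity estimate on the product $\tilde{\mathbf{A}}^{k+K_1-1:k}$ and a uniform lower bound on $\phi_i^k$. One small caution: in the paper's construction the augmented transition matrix does \emph{not} have a positive diagonal on the virtual-node block (the shift along the $(j,i)^0 \to \cdots \to (j,i)^D$ chain has zero diagonal for $d<D$), so the primitivity argument goes through the first $I$ rows only, exactly as you anticipate in your ``strictly positive column on every real-agent row'' remark.
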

\begin{proof}
	See Sec.~\ref{sec:pertavg}.
\end{proof}

\noindent \textbf{Discussion:} Several comments are in order.

\subsubsection{On the asynchronous model}
Algorithm~\ref{alg:pertavg} captures a gamut of asynchronous {\it parallel} 
schemes and architectures, through the mechanism of generation of  $(i^k, \mathbf{d}^k)$.    Assumption~\ref{ass:delays} on $(i^k, \mathbf{d}^k)$ is quite mild: (a)  controls the frequency of the updates whereas (b) 
  limits the age of the old information used in the
computations; they can be easily enforced in practice.  For instance,   (a)  is readily satisfied if each agent wakes up and performs an update whenever some independent internal clock ticks or it is triggered by some of the neighbors;  (b) imposes conditions on the frequency and quality of the communications: information used by each agent cannot become infinitely old, implying that successful communications must occur sufficiently often. This however does not enforce any specific protocol on the activation/idle
time/communication.  For instance, i) agents need not perform the actions in  {Algorithm~\ref{alg:pertavg}} sequentially or inside the same activation round; or ii) executing the ``push'' step does not mean that agents must broadcast their new variables in the same activation; this would just incur a delay (or packet loss) in the  communication.\\\indent 
Note that the time-varying nature of the delays $\mathbf{d}^k$ permits to model also packet losses, as detailed next. Suppose that at iteration $k_1$ agent $j$ sends  its current $\rho,\sigma$-variables to its out-neighbor  $\ell$ and they  get lost; and let  $k_2$ be  the subsequent   iteration when $j$ updates again.   Let $t$ be the first iteration after $k_1$ when agent $\ell$ performs its update;  it  will use   information from $j$ such that  $t-d_j^t\notin [k_1+1,k_2]$, for some   $d_j^t\leq D<\infty$. If $t-d_j^t<  k_1+1$,     no newer information from $j$    has been  used by $\ell$; otherwise  $t-d_j^t\geq k_2+1$ (implying $k_2<t$), meaning that agent $\ell$ has used information  not older than  $k_2+1$. 

 \subsubsection{Comparison with \cite{dominguez2011distributed1,BofCarli17,bof2017newton}}   The use of 
  counter variables [such as  $({\bm \rho},\sigma,\tilde{{\bm \rho}},\tilde{\sigma})$-variables in our scheme]   was first introduced in  \cite{dominguez2011distributed1} to design a synchronous average consensus algorithm robust to packet losses. In \cite{BofCarli17}, this scheme was extended to deal with uncoordinated (deterministic) agents' activations  whereas  \cite{bof2017newton} built on \cite{BofCarli17} to design, in the same setting,  a  distributed Newton-Raphson  algorithm.  There are    important differences between  \PertAvgname/ and the aforementioned schemes, namely: i) none of them  can deal with \emph{delays but packet losses}; ii)  \cite{dominguez2011distributed1}  is {\it synchronous}; and  iii)\cite{BofCarli17,bof2017newton}   are not \emph{parallel} schemes, as  at each iteration only one agent is allowed to  wake up and transmit information to its neighbors. For instance,   \cite{BofCarli17,bof2017newton} cannot model synchronous parallel (Jacobi) updates.   Hence, the convergence analysis of \PertAvgname/  calls for a new line of proof, as introduced in Sec.~\ref{sec:pertavg}.  

 \subsubsection{Beyond average consensus} By choosing properly the perturbation signal ${\bm \epsilon}^k$,   \PertAvgname/ can solve different  problems. Some examples are discussed next.  \\
\noindent  {\it (i) Error free:  $\bm \epsilon^k=\mathbf{0}$.}   \PertAvgname/ solves the average consensus problem and \eqref{thm:track:eq} reads\vspace{-0.2cm}$$\Big\|{\mathbf{y}_i\spe{k+1} - \left(1/I\right) \cdot  \sum_{i=1}^I \bd{z}_i^0}\Big\|\leq C_1\,\rho^{k}\,\norm{\bd{z}\spe{0}}.\vspace{-0.2cm}$$

\noindent  {\it (ii) Vanishing error: $\lim_{k\to\infty} \|\bm{\epsilon}^k\| = 0$.}  Using \cite[Lemma~7(a)]{di2016next},   \eqref{thm:track:eq} reads    
 $\lim_{k\to\infty}\|\mathbf{y}_i\spe{k+1} -  \mathfrak{m}_z^{k+1}\|=0$. 
 
\noindent  {\it (iii) Asynchronous tracking.} 
Each agent $i$ owns a (time-varying) signal  $\{\bd{u}_i^k\}_{k\in \mathbb{N}_0}$; the average tracking problem consists in asymptotically track the average signal  {$\bar{\bd{u}}^k\triangleq (1/I)\cdot\sum_{i = 1}^I \mathbf{u}_i^k$,} that is, \vspace{-0.2cm}\begin{equation}\label{eq:signal}\lim_{k\to\infty}\norm{\by_i\spe{k+1} -  \bar{\mathbf{u}}^{k+1}} = 0,\quad \forall i\in \mathcal{V}.\end{equation} 
Under mild conditions on the signal, this can be accomplished in a distributed and asynchronous fashion, using \PertAvgname/, as formalized next.  \vspace{-0.2cm}
\begin{corollary}\label{cor:signal}
Consider, the following setting   in \PertAvgname/:
 $\bd{z}_i^0 = \bd{u}_i^0$, for all $i\in \mathcal{V}$;     $\bm{\epsilon}^k= \bd{u}_{i^k}^{k+1} - \tilde{\bd{u}}_{i^k}^{k}$, with \vspace{-0.3cm} 
\begin{align*}
\tilde{\bd{u}}_i^{k+1} = 
\begin{cases}
\bd{u}_i^{k+1} & \text{if } i=i^k;\\
\tilde{\bd{u}}_{i}^{k} & \text{otherwise};
\end{cases} \qquad \tl{\bd{u}}_i^{0} = \bd{u}_i^0;
\end{align*}
  Then     \eqref{thm:track:eq} holds, with  $\mathfrak{m}_z^{k+1}= \sum_{i = 1}^I \tilde{\bd{u}}_{i}^{k+1}$. Furthermore,    if   $\lim_{k\to\infty} \sum_{i=1}^I \norm{\bd{u}_i^{k+1}-\bd{u}_i^{k}} = 0$, then   \eqref{eq:signal} holds. 
\end{corollary}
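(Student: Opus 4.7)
The plan is to derive the corollary directly from Theorem~\ref{thm:track} by two successive reductions: first, rewrite the total mass $\mathfrak{m}_z^{k+1}$ in closed form as the stated sum of ``committed'' signal values $\tilde{\bd{u}}_i^{k+1}$; and second, show that, under the vanishing‐increment hypothesis, both the perturbation term appearing on the right‐hand side of \eqref{thm:track:eq} and the gap between $(1/I)\,\mathfrak{m}_z^{k+1}$ and $\bar{\bd{u}}^{k+1}$ tend to zero.

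For the mass identity, I would start from the closed form $\mathfrak{m}_z^{k+1}=\sum_i \bd{z}_i^0 + \sum_{t=0}^{k}\bm{\epsilon}^t$ furnished by Theorem~\ref{thm:track}. With the prescribed choices, $\bm{\epsilon}^t=\bd{u}_{i^t}^{t+1}-\tilde{\bd{u}}_{i^t}^{t}$ is exactly the increment $\tilde{\bd{u}}_{i^t}^{t+1}-\tilde{\bd{u}}_{i^t}^{t}$ of the auxiliary variable at the active agent (by construction, only $i^t$'s committed value changes at iteration $t$). Grouping increments agent by agent and telescoping yields $\sum_{t=0}^k \bm{\epsilon}^t = \sum_i (\tilde{\bd{u}}_i^{k+1}-\bd{u}_i^0)$, whence $\mathfrak{m}_z^{k+1}=\sum_i \tilde{\bd{u}}_i^{k+1}$, and substitution into \eqref{thm:track:eq} gives the first assertion.

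For the limit claim, I would decompose
\begin{equation*}
\by_i^{k+1}-\bar{\bd{u}}^{k+1}=\bigl[\by_i^{k+1}-(1/I)\,\mathfrak{m}_z^{k+1}\bigr]+(1/I)\sum_{j=1}^I\bigl(\tilde{\bd{u}}_j^{k+1}-\bd{u}_j^{k+1}\bigr)
\end{equation*}
and drive each piece to zero. Assumption~\ref{ass:delays}(i) guarantees that every agent is activated at least once in each window of $T$ iterations, so for every $l$ there exists $s^*\in[l-T,l-1]$ with $\tilde{\bd{u}}_{i^l}^l=\bd{u}_{i^l}^{s^*+1}$; a triangle‐inequality telescope then bounds
\begin{equation*}
\|\bm{\epsilon}^l\|\leq \sum_{s=l-T+1}^{l}\sum_{j=1}^I \|\bd{u}_j^{s+1}-\bd{u}_j^s\|,
\end{equation*}
a fixed‐length window sum of terms that vanish by hypothesis; hence $\|\bm{\epsilon}^l\|\to 0$. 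Invoking \cite[Lemma~7(a)]{di2016next} on the right‐hand side of \eqref{thm:track:eq} then drives the first bracket to zero. An entirely analogous argument bounds each $\|\bd{u}_j^{k+1}-\tilde{\bd{u}}_j^{k+1}\|$ by a window sum of length at most $T$, so the second bracket also vanishes, yielding \eqref{eq:signal}.

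The main technical point is the bookkeeping that ties $\bm{\epsilon}^l$ and $\tilde{\bd{u}}_j^{k+1}-\bd{u}_j^{k+1}$ to increments of $\{\bd{u}_i^k\}$ over a bounded‐length past. This is where the uniform‐activation condition Assumption~\ref{ass:delays}(i) is essential: absent a uniform bound on time between consecutive activations of each agent, the gap $k-s^*$ could grow without bound, and the summability of consecutive signal increments would no longer suffice to control the perturbation energy. Once this bookkeeping is in place, everything else is a direct application of Theorem~\ref{thm:track}.
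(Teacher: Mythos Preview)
Your proposal is correct and follows essentially the same route as the paper: the paper proves the mass identity $\mathfrak{m}_z^{k}=\sum_i \tilde{\bd{u}}_i^{k}$ by a one-step induction (which is equivalent to your agent-wise telescoping of $\sum_t \bm{\epsilon}^t$), then bounds $\|\bm{\epsilon}^k\|$ and $\|\bd{u}_j^{k+1}-\tilde{\bd{u}}_j^{k+1}\|$ by the same length-$T$ window sums you describe, and concludes via \cite[Lemma~7(a)]{di2016next} and the triangle inequality exactly as you do. The only cosmetic difference is that the paper explicitly restricts the window bound to $k\geq T$, whereas you implicitly rely on the limit to absorb the finitely many early iterations.
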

\begin{proof}
	 See Appendix~\ref{pf:signal}.
\end{proof}
This instance of \PertAvgname/ will be used in  Sec.~\ref{sec:Asy-SONATA} to perform asynchronous gradient tracking. 
\begin{remark}[Asynchronous average consensus] \emph{To the best of our knowledge, the error-free instance of the \PertAvgname/ discussed above  is the first  (stepsize-free) scheme that   provably solves the  {\it average} consensus problem at a linear rate, under the general  asynchronous model described  by Assumption~\ref{ass:delays}. In fact, the existing    asynchronous consensus schemes  \cite{nedic2010convergence}  \cite{lin2013constrained} achieve an agreement among the agents' local variables whose value   is not in general the average of their initial values, but instead some  {\it unknown} function of them and the asynchronous modus operandi of the agents.
  Related to the \PertAvgname/ is the  ra-AC algorithm in \cite{BofCarli17}, which enjoys the same convergence property but under a more restrictive and specific asynchronous model (no delays but packet losses and single-agent activation per iteration).}\vspace{-0.2cm}  
\end{remark}
 
  \vspace{-0.3cm}

\section{Asynchronous SONATA (ASY-SONATA)}\label{sec:Asy-SONATA} 

 We  are ready now to introduce our distributed asynchronous algorithm--ASY-SONATA. The algorithm combines SONATA (cf.~Sec.~\ref{sec:SONATA}) with  \PertAvgname/ (cf.~Sec.~\ref{sec:Asy-Sum-Push}), the latter replacing the synchronous  tracking scheme \eqref{eq:SONATA_s_update}-\eqref{eq:SONATA_y_update}. The ``global view'' of the scheme is given in Algorithm~\ref{alg:AsyTracking}. 

 \begin{algorithm}[!ht]
\caption{ASY-SONATA (Global View)}
  \begin{algorithmic}
    \StatexIndent[0] \textbf{Data:} For all agent $i$ and $\forall j \in \mathcal{N}_i^{\text{in}}$, $\mathbf{x}_i^0 \in \mathbb{R}^n$, $\mathbf{z}_i^0 = \nabla f_i(\mathbf{x}_i^0)$, $\phi_i^0 = 1$,  {$\tilde{\boldsymbol{\rho}}_{ij}^0 = 0$},  $ \tilde{\sigma}_{ij}^0 = 0$, $\tau_{i j}^{-1} = -D$.  And for $t = -D, -D+1, \ldots, 0$, $\boldsymbol{\rho}_{ij}^t = 0$, $\sigma_{ij}^t = 0$, $\mathbf{v}_i^t = 0$.  Set $k = 0$.

    \While{a termination criterion is not met}

	\State  \texttt{(S.1)} Pick $(i^k, \bd{d}^k)$;

    \State \texttt{(S.2)} Set:
	\begin{align*}
        \tau_{i^k j}^k = \max(\tau_{i^k j}^{k-1}, k-d_j^k), \quad \forall j \in \mathcal{N}_{i^k}^{\text{in}}.
        \end{align*}

      \State \texttt{(S.3)} Local Descent:
        \begin{align}\label{local-descent}
        \mathbf{v}_{i^k}^{k+1} = \mathbf{x}_{i^k}^k - \gamma^k\mathbf{z}_{i^k}^k.
        \end{align}
      \State \texttt{(S.4)} Consensus:
        \begin{align*}
        \mathbf{x}_{i^k}^{k+1} = w_{i^ki^k}\mathbf{v}_{i^k}^{k+1} + \sum_{j\in \mathcal{N}_{i^k}^{\text{in}}} w_{{i^k}j} \mathbf{v}_{j}^{\tau_{i^k j}^k}.
        \end{align*}
      \State \texttt{(S.5)} Gradient Tracking:	
         \begin{itemize}[leftmargin=1cm] \small 
     \item  \texttt{(S.5.1)} \textbf{Sum step:}
	\begin{align*} 
		 \bz_{i^k}^{k+\frac{1}{2}}  = & \bz_{i^k}^{k}  +  \displaystyle\sum_{j \in \mathcal{N}_{i^k}^{\text{in}}}  \left({\bm \rho}_{{i^k}j}^{\tau_{i^k j}^k }-\tilde{\bm \rho}_{i^kj}^k\right) \\
								    &	+ \nabla f_{i^k}(\bd{x}_{i^k}^{k+1}) - \nabla f_{i^k}(\bd{x}_{i^k}^{k})  \end{align*}
     \item \texttt{(S.5.2)} \textbf{Push step:}
		\begin{align*}	
		 & \bz_{i^k}^{k+1}    =  a_{i^ki^k}\, \bz_{i^k}^{k+\frac{1}{2}}  , \\
		& {\bm \rho}_{j {i^k}}^{k+1}   = {\bm \rho}_{j {i^k}}^{k} + a_{j {i^k}} \,\bz_{i^k}^{k+\frac{1}{2}}, \quad \forall j\in \mathcal{N}_{i^k}^{\text{out}}   \end{align*}
     \item \texttt{(S.5.3)} \textbf{Mass-Buffer update:}
		\begin{align*}	
		& \tilde{\bm \rho}_{{i^k}j}^{k+1} = {\bm \rho}_{{i^k}j}^{\tau_{i^k j}^k },   \label{eq:asy_ps_counter}  { \quad \,\ \forall j \in \mathcal{N}_{i^k}^{\text{in}}}
	\end{align*}
	\end{itemize}
	\State \texttt{(S.6)} \parbox[t]{\dimexpr\linewidth-\algorithmicindent}{ { Untouched state variables shift to state $k+1$ \\ while keeping the same value;    $k\leftarrow k+1$}.\strut}

    \EndWhile
  \end{algorithmic}\label{alg:AsyTracking}
\end{algorithm}

 In ASY-SONATA, agents continuously and with no coordination perform: i)  their local computations [cf. \textbf{{(S.3)}}], possibly using an out-of-sync estimate  $\bz_{i^k}^k$ of the average gradient; in   \eqref{local-descent}, $\gamma^k$ is  a step-size (to be properly chosen);
 ii) a  consensus step on the $\bx$-variables, using possibly outdated  information   $\bv_j^{\tau_{i^k j}^k}$ from their in-neighbors [cf. \textbf{{(S.4)}}]; and iii)  gradient tracking [cf. \textbf{{(S.5)}}] to update the local estimate $\bz_{i^k}^k$, based on  the current  cumulative mass variables  $  {\boldsymbol{\rho}}_{i^kj}^{\tau_{i^k j}^k}$,   and buffer variables $\tilde{{\bm \rho}}_{{i^k}j}^{k}$, $j \in \mathcal{N}_{i^k}^{\text{in}}$.   
 
Note that     in   Algorithm~\ref{alg:pertavg},  the tracking variable $\by_{i^k}^{k+1} $ is obtained rescaling $\bz_{i^k}^{k+1}$ by the factor $1/\phi_{i^k}^{k+1}$. In Algorithm~\ref{alg:AsyTracking}, we   absorbed  the scaling $1/\phi_{i^k}^{k+1}$  in the step size and use directly $\bz_{i^k}^{k+1}$ as a proxy of the average gradient, eliminating thus  the $\phi$-variables   (and the related  $\sigma$-, $\tilde{\sigma}$-variables). Also, for notational simplicity and without loss of generality, we assumed that the $\bv$- and ${\bm \rho}$- variables are subject to the same delays (e.g., they are transmitted within the same packet); same convergence results hold if  different delays are considered.
 
We study now   convergence   of the scheme, under  a constant step-size or   diminishing, uncoordinated  ones. \vspace{-0.3cm}

\subsection{Constant Step-size}\label{sec:const_step_size}

Theorem~\ref{thm:linear_const} below establishes {\it linear} convergence of ASY-SONATA when $F$ is strongly convex.  

\begin{theorem}[Geometric convergence]\label{thm:linear_const} 
Consider~(P) under Assumption ~\ref{ass:cost_functions_strongly_convex}, and let   $\bx^\star$ denote its unique solution. Let  {$\{(\bd{x}_i^k)_{i = 1}^ I\}_{k \in \mathbb{N}_0}$} be the sequence generated by Algorithm~\ref{alg:AsyTracking}, under Assumption~\ref{ass:strong_conn}, \ref{ass:delays}, and with weight-matrices   $\mathbf{W}$ and $\mathbf{A}$   satisfying Assumption~\ref{assumption_weights}.  Then, there exists a constant $\bar{\gamma}_1 >0$ [cf. \eqref{eq:gamma_1}] such that if   $\gamma^k \equiv \gamma \leq \bar{\gamma}_1$, it holds
\begin{align}
M_{\text{sc}}(\bx^k) \triangleq \| \bx^k - \bd{1}_I \otimes \bx^\star \|  = \mathcal{O}(\lambda^k) ,
\end{align}
with   $\lambda \in (0,1)$   given by 
\begin{equation}\label{eq:rate-expression}
\lambda = 
\begin{cases}
1 -  \frac{\tau \lbm^{2K_1} \gamma}{2}  &\text{if }  \gamma \in (0, \hat{\gamma}_1],\\
\rho + \sqrt{ J_1 \gamma} & \text{if } \gamma \in ( \hat{\gamma}_1, \hat{\gamma}_2) ,
\end{cases}
\end{equation}
where $\hat{\gamma}_1$ and  $\hat{\gamma}_2$ are some constants strictly smaller than $\bar{\gamma}_1$, and $J_1 \triangleq  (1-\rho)^2/\hat{\gamma}_2$.
\end{theorem}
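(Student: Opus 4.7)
I would reduce Theorem~\ref{thm:linear_const} to a coupled three-term linear recursion and close it via the generalized small-gain argument highlighted in Section~\ref{contributions}. The three quantities to track are the disagreement $E_c^k\triangleq\|\bx^k-\bd{1}\otimes\bar{\bx}^k\|$ of the network average $\bar{\bx}^k\triangleq(1/I)\sum_i \bx_i^k$, the optimality gap $E_\star^k\triangleq\|\bar{\bx}^k-\bx^\star\|$, and the gradient-tracking error $E_t^k\triangleq\|\bz^k-(1/I)\,\mathfrak{m}_z^k\otimes\bd{1}\|$ of the $\bz$-variables around the system mass. Because $M_{\mathrm{sc}}(\bx^k)\le E_c^k+\sqrt{I}\,E_\star^k$, it suffices to show that each of these three errors decays at a common geometric rate $\lambda$. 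Following the technique flagged in Section~\ref{contributions}, I would first pass to the augmented-graph reformulation so that the asynchronous ASY-SONATA updates become a delay-free synchronous system over an enlarged set of virtual agents, putting all three recursions on a single clock, and absorb the $\phi$-normalization (which Algorithm~\ref{alg:AsyTracking} folds into the step-size) into constants.

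\textbf{Per-quantity recursions.} For the tracking error I would invoke Theorem~\ref{thm:track} with the specific perturbation $\bm{\epsilon}^k=\nabla f_{i^k}(\bx_{i^k}^{k+1})-\nabla f_{i^k}(\bx_{i^k}^{k})$ injected by step~\texttt{(S.5.1)} of ASY-SONATA; by telescoping, this choice gives $\mathfrak{m}_z^k=\sum_i\nabla f_i(\bx_i^k)$ and an $E_t^k$-bound of the form $C_1\rho^{k-1}\|\bz^0\|+C_1\sum_{\ell<k}\rho^{k-1-\ell}\|\bm{\epsilon}^\ell\|$. By $L_i$-Lipschitz smoothness and the descent step~\eqref{local-descent}, $\|\bm{\epsilon}^\ell\|\le L_{\max}\|\bx^{\ell+1}-\bx^\ell\|\lesssim\gamma\|\bz^\ell\|$, which a further use of $L$-smoothness of $F$ controls by $\mathcal{O}(\gamma)(E_t^\ell+E_c^\ell+E_\star^\ell)$. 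For the disagreement, the $\bx$-update~\texttt{(S.4)} is row-stochastic; composing it over a window of length $K_1$ and using Assumption~\ref{ass:delays} with strong connectivity gives a window contraction $E_c^{k+K_1}\le\eta\,E_c^k+C_2\gamma\max_{\ell\in[k,k+K_1)}(E_t^\ell+E_\star^\ell)$ for some $\eta\in(0,1)$ depending only on $\bar{m}$ and $K_1$. For the optimality gap, averaging~\texttt{(S.3)}--\texttt{(S.4)} gives $\bar{\bx}^{k+1}-\bar{\bx}^k=-(\gamma/I)\nabla F(\bar{\bx}^k)+\mathcal{O}(\gamma)(E_c^k+E_t^k)$, after which $\tau$-strong convexity plus $L$-smoothness of $F$ yield the standard one-step contraction $E_\star^{k+1}\le(1-c\,\tau\gamma)E_\star^k+C_3\gamma(E_c^k+E_t^k)$ whenever $\gamma$ is below an absolute threshold.

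\textbf{Small-gain closure and the two-regime rate.} Packaging the three inequalities into a single vector recursion $\boldsymbol{E}^{k+K_1}\preccurlyeq\boldsymbol{M}(\gamma)\,\boldsymbol{E}^k$ on windows of length $K_1$, the matrix $\boldsymbol{M}(\gamma)$ is entrywise nonnegative with diagonal $(\rho^{K_1},\eta,(1-c\tau\gamma)^{K_1})$ and off-diagonal entries of order $\mathcal{O}(\gamma)$. Invoking the generalized small-gain theorem from Section~\ref{contributions}, I would show that the spectral radius of $\boldsymbol{M}(\gamma)$ stays strictly below $1$ for all $\gamma\in(0,\bar{\gamma}_1]$ and read off its leading eigenvalue to obtain~\eqref{eq:rate-expression}: for very small $\gamma\in(0,\hat{\gamma}_1]$ the strong-convexity diagonal entry dominates, giving $\lambda=1-\frac{\tau\lbm^{2K_1}\gamma}{2}$; for $\gamma\in(\hat{\gamma}_1,\hat{\gamma}_2)$ the off-diagonal coupling between the tracking/consensus rows (rate $\rho$) and the optimality row produces a perturbed leading eigenvalue of the form $\rho+\sqrt{J_1\gamma}$, with $J_1=(1-\rho)^2/\hat{\gamma}_2$ coming from the coupling constants and the crossover point $\hat{\gamma}_1$ determined by equating the two expressions. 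The main obstacle I foresee is not any single inequality but the alignment step: the three bounds naturally live on different time scales (one-step for optimality, $K_1$-step for consensus, and a geometric convolution for tracking), so collapsing them into a single linear system with constants tight enough to produce the $\sqrt{\gamma}$ correction---rather than a weaker exponent or a worse base---requires a careful choice of the window and of the weighted norm in which the composition is carried out; this is precisely where the generalized version of the small-gain theorem must do work beyond the classical synchronous argument used in~\cite{Nedich-geometric}.
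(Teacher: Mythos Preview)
Your high-level architecture---couple consensus, tracking, and optimality errors and close via small gain---matches the paper, but the specific recursion you write for the optimality gap does not hold in this setting, and this is a genuine gap, not a cosmetic one. You take the reference point to be the simple average $\bar{\bx}^k=(1/I)\sum_i\bx_i^k$ and claim that averaging \texttt{(S.3)}--\texttt{(S.4)} yields $\bar{\bx}^{k+1}-\bar{\bx}^k=-(\gamma/I)\nabla F(\bar{\bx}^k)+\mathcal{O}(\gamma)(E_c^k+E_t^k)$. But at each iteration only agent $i^k$ moves, and $\mathbf{W}$ is merely \emph{row}-stochastic (digraph), so $\bar{\bx}^{k+1}-\bar{\bx}^k=(1/I)(x_{i^k}^{k+1}-x_{i^k}^k)$ contains a non-vanishing mixing term $\sum_j w_{i^kj}v_j^{\tau_{i^kj}^k}-(1-w_{i^ki^k})x_{i^k}^k$ that is \emph{not} $\mathcal{O}(\gamma)$ and cannot be absorbed into $E_c^k$. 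The simple average is not invariant under row-stochastic mixing, so you cannot extract a clean $(1-c\tau\gamma)$ contraction for $E_\star^k$ this way. A similar issue affects your tracking error: since the $\phi$-normalization has been absorbed, $z_{i^k}^k$ does not approximate $(1/I)\mathfrak{m}_z^k$ but rather $\xi_{i^k}^{k-1}\mathfrak{m}_z^k$, where $\xi^k$ is the limiting left-stochastic vector from Lemma~\ref{lm:rate1}.

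The paper fixes both problems by replacing $\bar{\bx}^k$ with a \emph{weighted} reference $\omega^k\triangleq\bm{\psi}^{0\top}\bd{h}^0+\sum_{l<k}\bm{\psi}^{l\top}\bd{\delta}^l$, built from the Perron-like vectors $\bm{\psi}^k$ of the augmented row-stochastic system (Lemma~\ref{lm:Wscramb}); this $\omega^k$ satisfies the exact one-step relation $\omega^{k+1}=\omega^k-\gamma\,\psi_{i^k}^k z_{i^k}^k$, from which the contraction $|\omega^{k+1}-x^\star|\le(1-\tau\eta^2\gamma)|\omega^k-x^\star|+\ldots$ follows directly. Correspondingly, the paper tracks \emph{four} quantities rather than three---it singles out $|z_{i^k}^k|$ in addition to the tracking error $|z_{i^k}^k-\xi_{i^k}^{k-1}\bar g^k|$---and, instead of windowing over $K_1$ steps, converts the geometric-convolution bounds into linear inequalities in the weighted sup-norms $|u|^{\lambda,N}\triangleq\max_{k\le N}|u^k|/\lambda^k$ (Lemma~\ref{conv_lm}), assembles a $4\times 4$ system in these norms, and checks $\rho(\mathbf{K})<1$ via a characteristic-polynomial sign test (Lemma~\ref{lm:polynomial}). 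The two-regime rate \eqref{eq:rate-expression} then comes not from an eigenvalue readout but from choosing $\lambda\ge 1-\tfrac{\tau\eta^2}{2}\gamma$ and solving the resulting sufficient condition $(\lambda-\rho)^2\ge J_1\gamma$.
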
\vspace{-0.3cm}
\begin{proof}
	See Sec.~\ref{sec:theory}.\vspace{-0.1cm}
\end{proof}

When $F$ is convex (resp. nonconvex), we introduce the following merit  function   to measure the progresses of the algorithm towards optimality (resp.  stationarity) and consensus: 
\begin{equation}\label{eq:merit_function}
M_F(\bx^k) \triangleq \max \{\norm{\nabla F(\bar{\bd{x}}^k) }^2, \norm{ \bd{x}^k- \bd{1}_I \otimes \bar{\bd{x}}^k}^2\},
\end{equation}
where $\bd{x}^k \triangleq  [\bd{x}_1^{k\top}, \cdots, \bd{x}_I^{k\top}]^{\top}$ and $\bar{\bd{x}}^k \triangleq (1/I) \cdot \sum_{i=1}^I \bd{x}_i^k.$
Note that $M_F$ is a valid merit function,  since it is continuous and $M_F(\bx) =0$ if and only if all $\bx_i$'s are consensual and optimal (resp. stationary solutions).
\begin{theorem}[Sublinear convergence]\label{thm:sublinear_const}
 Consider~(P) under Assumption~\ref{ass:cost_functions} (thus possibly nonconvex). Let   {$\{(\bd{x}_i^k)_{i = 1}^I \}_{k \in \mathbb{N}_0}$} be the sequence generated by Algorithm~\ref{alg:AsyTracking}, in the same setting of Theorem~\ref{thm:linear_const}. Given $\delta>0$,  let $T_{\delta}$ be the first iteration $k\in \mathbb{N}_0$ such that $M_F (\bx^k)  \leq \delta$.
Then, there exists a  $\bar{\gamma}_2 >0$ [cf. \eqref{eq:gamma_2}], such that if $\gamma^k \equiv \gamma \leq \bar{\gamma}_2$,   {$T_\delta = \mathcal{O} (1/\delta)$.
The values of the above constants is given in the proof.\vspace{-0.1cm}
                                                                                    }
\end{theorem}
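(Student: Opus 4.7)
My plan is to prove the sublinear rate via a Lyapunov-function argument. The proposed potential is
$$V^k \triangleq F(\bar{\bx}^k) - \inf F + c_1\,\|\bx^k - \bd{1}_I \otimes \bar{\bx}^k\|^2 + c_2\,E_{\text{track}}^k,$$
where $E_{\text{track}}^k$ is the gradient-tracking error of the inner \PertAvgname/ module, augmented with a sliding window of the last $D$ perturbations so as to absorb the delays. The target is a one-step descent inequality
$$V^{k+1} \leq V^k - c\,\gamma\,\bigl(\norm{\nabla F(\bar{\bx}^k)}^2 + \norm{\bx^k - \bd{1}_I \otimes \bar{\bx}^k}^2\bigr),\quad c>0,$$
valid for every $\gamma \in (0,\bar{\gamma}_2]$. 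Since $M_F(\bx^k)$ is upper bounded by the sum of the two quantities on the right-hand side, telescoping this inequality together with $\inf F > -\infty$ (Assumption~\ref{ass:cost_functions}(ii)) gives $\sum_{k=0}^{T-1} M_F(\bx^k) = \mathcal{O}(1)$, hence $\min_{0\leq k < T} M_F(\bx^k) = \mathcal{O}(1/T)$, which is exactly $T_\delta = \mathcal{O}(1/\delta)$.

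To establish the one-step inequality I would combine three ingredients. (a) The descent lemma applied at $\bar{\bx}^k$: since $\nabla F$ is $L_F$-Lipschitz (with $L_F \leq \sum_i L_i$) and the average displacement decomposes as $\bar{\bx}^{k+1} - \bar{\bx}^k = -(\gamma/I)\,\nabla F(\bar{\bx}^k) + r^k$, where $r^k$ collects the tracking error and the error induced by evaluating gradients at non-consensual $\bx_i^k$, one obtains a $-\Theta(\gamma)\,\norm{\nabla F(\bar{\bx}^k)}^2$ term plus remainders proportional to the consensus disagreement and the tracking error. (b) A consensus recursion obtained from the $\mathbf{W}$-mixing in step (S.4); using the augmented-graph reduction of Section~\ref{sec:pertavg} (cf. Remark~\ref{rmk_augmented_graph}), the asynchronous mixing over $(2I-1)T + I\,D$ consecutive iterations contracts the consensus disagreement by a factor strictly smaller than one, up to a driving term of order $\gamma\,\norm{\bz^k}$. (c) Theorem~\ref{thm:track} applied to the tracking module with perturbation $\bm{\epsilon}^k = \nabla f_{i^k}(\bx_{i^k}^{k+1}) - \nabla f_{i^k}(\bx_{i^k}^k)$; Lipschitz continuity and~\eqref{local-descent} yield $\norm{\bm{\epsilon}^k} \leq L\,\gamma\,\norm{\bz_{i^k}^k}$, closing the feedback loop among the three errors.

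The hard part will be handling this feedback loop in the presence of arbitrary (bounded) delays: the tracking error at iteration $k$ involves the full perturbation history through the convolution $\sum_{l=0}^k \rho^{k-l}\norm{\bm{\epsilon}^l}$ from~\eqref{thm:track:eq}, and each $\bm{\epsilon}^l$ itself depends on $\bz_{i^l}^l$, which is driven by the optimality and consensus errors. The route I would take is the generalized small-gain argument advertised in Section~\ref{contributions}: encode consensus error, tracking error, and $\norm{\nabla F(\bar{\bx}^k)}$ as three non-negative scalar sequences, express each as a linear convolution of the others plus an $\mathcal{O}(\gamma^2)$ driving term, and show that for $\gamma$ small enough the induced $\ell_2\!\to\!\ell_2$ norm of the coupling operator is strictly less than one. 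The threshold $\bar{\gamma}_2$ arises precisely as the largest $\gamma$ making this operator a contraction; it scales like $(1-\rho)/(L\,C_1)$ with $\rho,C_1$ inherited from Theorem~\ref{thm:track}, and is in general strictly smaller than the $\bar{\gamma}_1$ of Theorem~\ref{thm:linear_const} because without strong convexity one cannot boost the negative $\gamma$-term in the descent lemma via a $-\gamma\tau\,\|\bx^k-\bd{1}_I\otimes \bx^\star\|^2$ contribution.

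Once the small-gain bound is in place, the Lyapunov inequality follows by adding suitably weighted copies of (a)--(c): the positive remainders in (a) are absorbed by (b)--(c) after one contraction cycle, the driving $\gamma\,\norm{\bz^k}^2$ terms are in turn bounded by $\norm{\nabla F(\bar{\bx}^k)}^2 + \norm{\bx^k - \bd{1}_I \otimes \bar{\bx}^k}^2$ plus higher-order $\gamma$-contributions that are absorbed again by choosing $\gamma \leq \bar{\gamma}_2$. Telescoping $V^{k+1} \leq V^k - c\gamma M_F(\bx^k)$ over $k=0,\ldots,T-1$ and using $V^k \geq 0$ gives $\sum_{k=0}^{T-1} M_F(\bx^k) \leq V^0/(c\gamma) = \mathcal{O}(1)$, so that the first hitting time satisfies $T_\delta = \mathcal{O}(1/\delta)$, as claimed.
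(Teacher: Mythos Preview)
Your plan assembles the right ingredients, but the anchor you choose for the descent argument does not work in this setting. You propose to apply the descent lemma at the simple average $\bar{\bx}^k$ and write $\bar{\bx}^{k+1}-\bar{\bx}^k=-(\gamma/I)\nabla F(\bar{\bx}^k)+r^k$ with $r^k$ a ``small'' remainder. Over a digraph with only row-stochastic $\mathbf{W}$, the consensus step in (S.4) does \emph{not} preserve $\bar{\bx}$: even with $\gamma=0$, the single-agent mixing moves $\bar{\bx}$ by a quantity of the same order as the consensus disagreement, so it cannot be pushed into $r^k$. In addition, the tracking variable satisfies $z_{i^k}^k\approx\xi_{i^k}^{k-1}\sum_i\nabla f_i(x_i^k)$ with a time-varying, agent-dependent weight $\xi_{i^k}^{k-1}\in[\eta,1]$ (Lemma~\ref{lm:rate1}), not $(1/I)\nabla F$. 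Both obstructions are structural, and they are precisely why the paper works with the $\bm\psi$-weighted scalar $\CV^k$ of~\eqref{eq:dfz}, which obeys the clean recursion $\CV^{k+1}=\CV^k-\gamma\,\psi_{i^k}^k z_{i^k}^k$ [cf.~\eqref{dyz}] and hence admits a bona fide descent-lemma step (Proposition~\ref{eq:descent}).

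Even with the right anchor, the one-step inequality $V^{k+1}\le V^k-c\gamma M_F(\bx^k)$ you target is too strong here: consensus and tracking contract only over windows of length $K_1=(2I-1)T+ID$, so a per-iteration Lyapunov drop cannot absorb the error terms. The paper instead argues cumulatively. Descent at $\CV^k$ yields a bound on $\sum_{t\le k}(\NT^t)^2\gamma$ in terms of $F(\CV^0)-F^{\inf}$ plus $\sum_t(\CE^t)^2$ and $\sum_t(\TE^t)^2$; these two sums are bounded back by $\sum_t(\NT^t)^2\gamma^2$ via a direct $\ell_2$ convolution estimate (Lemma~\ref{lm:sqauresum}), \emph{not} the small-gain theorem. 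Closing the loop for $\gamma\le\bar\gamma_2$ gives $\sum_t(\NT^t)^2<\infty$, and then $\sum_k M_F(\bx^k)<\infty$ via the pointwise bound of Lemma~\ref{lm:merit}. The generalized small-gain theorem is used in the paper only for the strongly convex rate (Theorem~\ref{thm:linear_const}); in the nonconvex case there is no contracting ``optimality gap'' sequence to feed into it, so your proposed $\ell_2\to\ell_2$ small-gain route would have to be set up quite differently from what you sketch.
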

\begin{proof}
	See Sec.~\ref{sec:sublinear}.\vspace{-0.2cm}  
\end{proof}
Theorem~\ref{thm:linear_const} states that   consensus and optimization errors of the sequence generated by ASY-SONATA vanish  at a linear rate. 
  We are not aware of any other scheme enjoying such a property in  such  a  distributed, asynchronous computing environment. 
 For general, possibly nonconvex instances of  Problem~(P),  Theorem~\ref{thm:sublinear_const} shows that   both consensus and optimization errors of the sequence generated by ASY-SONATA vanish   at $\mathcal{O}(1/\delta)$ sublinear rate. 

The choice of a proper stepsize calls for the estimates  of   $\bar{\gamma}_1$ and  $\bar{\gamma}_2$ in Theorems \ref{thm:linear_const}   and \ref{thm:sublinear_const}, which depend on the following quantities: the optimization parameters  $L_i$ (Lipschitz constants of the gradients) and  $\tau$ (strongly convexity constant), the network connectivity parameter $\rho$, and the constants $D$ and $T$ due to the asynchrony (cf. Assumption \ref{ass:delays}). Notice that the  dependence of the stepsize on $L_i$,   $\tau$, and  $\rho$ is common to all the existing distributed synchronous algorithms   and so is that on $T$ and $D$ to (even centralized) asynchronous  algorithms \cite{Bertsekas_Book-Parallel-Comp}. While $L_i$,   $\tau$, and  $\rho$ can be acquired following approaches discussed in the literature (see, e.g., \cite[Remark 4]{Nedich-geometric}), it is less clear how to estimate $D$ and $T$, as they are related to the asynchronous  model, generally not known to the agents. As an example, we  address this question    considering the following    fairly  general model for the  agents' activations and asynchronous communications.   
 Suppose that the length of any time window between consecutive ``push'' steps of any agent belongs to  $[p_{\text{min}}, p_{\text{max}}]$, for some $p_{\text{max}}\geq p_{\text{min}}>0$,  and one agent always sends out its updated information immediately after the completion of its ``push'' step. 
 The traveling time of each packet is at most    $D^{\text{tv}}$. Also,  at least one packet is   successfully received  every $D^{\text{ls}}$ successive one-hop communications. Note that there is a vast literature on how to estimate $D^{\text{tv}}$ and $D^{\text{ls}}$, based upon the  specific channel model under consideration; see, e.g., \cite{Rappaport_book,Kay_book}.   In this setting, it is not difficult to check that one can set $T = (I-1)\ceil{{p_{\text{max}}}/{p_{\text{min}}}} + 1$ and $D = I \ceil{{D^{\text{tv}}}/{p_{\text{min}}}} D^{\text{ls}}.$
To cope with the issue of estimating    $\bar{\gamma}_1$ and  $\bar{\gamma}_2$, in the next section we show how to employ in    ASY-SONATA diminishing, uncoordinated stepsizes.

\subsection{Uncoordinated diminishing step-sizes}\label{sec:dimin_step_size}

The use of a diminishing  stepsize shared across the agents is quite common in synchronous distributed algorithms. However, it is not   clear how to implement such option in an  asynchronous setting, without enforcing any coordination among the agents (they should know  the global iteration counter $k$).   
 In this section, we provide for the first time a solution to this issue. Inspired by \cite{CanelliCAMSAP17}, our model assumes  that each agent, {\it independently} and with {\it no coordination} with the others, draws the step-size from a local   sequence  {$\{\alpha^t\}_{t\in \mathbb{N}_0}$}, according to its local clock. The sequence $\{\gamma^k\}_{k\in \mathbb{N}_0}$ in \eqref{local-descent} will be thus the result of the ``uncoordinated samplings'' of the local out-of-sync sequences $\{\alpha^t\}_{t\in \mathbb{N}_0}$.  
The next theorem shows that in this setting, ASY-SONATA converges at a sublinear rate for both convex and nonconvex objectives. 

\begin{theorem}\label{thm:dimi_sublinear}
  Consider Problem (P) under Assumption~\ref{ass:cost_functions} (thus possibly nonconvex). Let   {$\{(\bd{x}_i^k)_{i = 1}^I\}_{k \in \mathbb{N}_0}$} be the sequence generated by Algorithm~\ref{alg:AsyTracking}, in the same setting of Theorem~\ref{thm:linear_const}, but with the agents using a local step-size sequence   $\{\alpha^t\}_{t\in \mathbb{N}_0}$ satisfying   $\alpha^t \downarrow 0$ and   $\sum_{t=0}^\infty \alpha^t =\infty$. Given $\delta>0$, let $T_{\delta}$ be the first iteration $k\in \mathbb{N}_0$ such that $M_F (\bx^k)  \leq \delta$. Then \vspace{-0.1cm}
 \begin{equation}\label{eq:sublinear-rate}
 T_\delta \leq \inf  \Big\{ k \in \mathbb{N}_0 \,\Big\vert \,\sum_{t=0}^{k} \gamma^t \geq c/\delta \Big\},\vspace{-0.1cm}\end{equation}
where  $c$ is a positive constant. \vspace{-0.1cm}
\end{theorem}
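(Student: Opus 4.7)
\textbf{Proof plan for Theorem~\ref{thm:dimi_sublinear}.}
The plan is to adapt the descent analysis underlying Theorem~\ref{thm:sublinear_const} to the effective step-size sequence $\{\gamma^k\}$ produced by uncoordinated local sampling, and then convert a weighted summability bound of the merit function into the desired iteration-complexity estimate~\eqref{eq:sublinear-rate}. The argument proceeds in four stages, leveraging the asynchronous gradient-tracking machinery of Corollary~\ref{cor:signal}.

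First, I would reduce the uncoordinated-sampling model to a global sequence. Each time agent $i^k$ activates, it draws the next entry of its private sequence $\{\alpha^t\}$; writing $t_{i^k}^k$ for agent $i^k$'s local counter at global iteration $k$, one has $\gamma^k = \alpha^{t_{i^k}^k}$. Assumption~\ref{ass:delays}(a) forces every agent to activate at least once in every window of $T$ global iterations, so each local counter diverges and the monotonicity $\alpha^t\downarrow 0$ yields $\gamma^k\to 0$. Moreover, since each $\alpha^t$ is used at least once by every agent that reaches count $t$, one gets $\sum_k \gamma^k \geq (1/T)\sum_t \alpha^t = \infty$. These two properties are the only ones the remainder of the argument uses.

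Second, I would establish a descent inequality for $F(\bar{\bd{x}}^k)$. Since only agent $i^k$ updates per global iteration, $\bar{\bd{x}}^{k+1}-\bar{\bd{x}}^k = (1/I)(\bd{x}_{i^k}^{k+1}-\bd{x}_{i^k}^k)$, which reduces to $-(\gamma^k/I)\bd{z}_{i^k}^k$ plus a term stemming from the row-stochastic consensus step~(S.4) with possibly delayed $\bd{v}_j^{\tau_{i^k j}^k}$. Using Assumption~\ref{ass:cost_functions}(i) and the decomposition
\begin{equation*}
\bd{z}_{i^k}^k = \nabla F(\bar{\bd{x}}^k) + \bigl(\bd{z}_{i^k}^k - \bar{\bd{z}}^k\bigr) + \bigl(\bar{\bd{z}}^k - \nabla F(\bar{\bd{x}}^k)\bigr),
\end{equation*}
yields a bound of the form
\begin{equation*}
F(\bar{\bd{x}}^{k+1}) \leq F(\bar{\bd{x}}^k) - \tfrac{c_1 \gamma^k}{I}\norm{\nabla F(\bar{\bd{x}}^k)}^2 + c_2 \gamma^k\bigl(E_{\mathrm{trk}}^k + E_{\mathrm{cons}}^k\bigr) + c_3 (\gamma^k)^2 \norm{\bd{z}_{i^k}^k}^2,
\end{equation*}
where $E_{\mathrm{trk}}^k$ is the gradient-tracking error controlled via Corollary~\ref{cor:signal} applied to the signals $\bd{u}_i^k = \nabla f_i(\bd{x}_i^k)$, and $E_{\mathrm{cons}}^k \triangleq \norm{\bd{x}^k - \bd{1}_I\otimes\bar{\bd{x}}^k}^2$.

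Third, I would couple $E_{\mathrm{trk}}^k$ and $E_{\mathrm{cons}}^k$ to $\{\gamma^t\}_{t\leq k}$. The consensus error obeys a contractive dynamics with perturbation $O(\gamma^k\norm{\bd{z}_{i^k}^k})$ driven by~(S.3); Theorem~\ref{thm:track} gives the analogous geometric decay for the tracking error with perturbation proportional to $\nabla f_{i^k}(\bd{x}_{i^k}^{k+1}) - \nabla f_{i^k}(\bd{x}_{i^k}^k) = O(\gamma^k)$ (via Assumption~\ref{ass:cost_functions}(i)). Convolving these geometric kernels against the perturbations and using that $\gamma^k\to 0$, the quadratic term $(\gamma^k)^2\norm{\bd{z}_{i^k}^k}^2$ can be absorbed into the $\gamma^k\norm{\nabla F(\bar{\bd{x}}^k)}^2$ descent once $\alpha^0$ is small enough (the tail smallness of $\gamma^k$ suffices after finitely many iterations). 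Telescoping against the lower bound Assumption~\ref{ass:cost_functions}(ii) and a parallel Lyapunov argument for the consensus dynamics then produces $\sum_k \gamma^k \norm{\nabla F(\bar{\bd{x}}^k)}^2 \leq C_1$ and $\sum_k \gamma^k E_{\mathrm{cons}}^k \leq C_2$, hence $\sum_k \gamma^k M_F(\bx^k) \leq c$ for some constant $c>0$.

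Fourth, I would convert this summability into the claimed rate. If $M_F(\bx^t)>\delta$ for every $t\leq k$, then $\delta \sum_{t=0}^{k}\gamma^t < \sum_{t=0}^{k}\gamma^t M_F(\bx^t) \leq c$, so any $k$ satisfying $\sum_{t=0}^k\gamma^t \geq c/\delta$ must already satisfy $M_F(\bx^k)\leq\delta$; this is exactly~\eqref{eq:sublinear-rate}. The main obstacle I expect is the third step: handling the small-gain coupling between the tracking and consensus contractions when the gain kernel is modulated by the non-monotone, mismatched sequence $\{\gamma^k\}$ (different agents' local counters produce different $\alpha$-values at the same $k$, so $\gamma^k$ need not be monotone even though $\alpha^t\downarrow 0$). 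This is the counterpart, under diminishing uncoordinated step-sizes, of the generalized small-gain argument used to prove Theorem~\ref{thm:linear_const}, and it is where the interplay between Assumption~\ref{ass:delays} and the summability of $\{\gamma^k E_\cdot^k\}$ needs the most care.
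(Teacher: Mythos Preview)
Your four-stage outline is sound and would succeed, but the paper executes Stages 2--3 along a cleaner path that avoids the very obstacle you flag at the end.

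\textbf{Reference point for the descent.} You run the Lyapunov argument on $F(\bar{\bd{x}}^k)$; the paper instead works with $F(\CV^k)$, where $\CV^k$ is the $\bm{\psi}$-weighted limit of the augmented consensus dynamics (Proposition~\ref{lm:PAC} and~\eqref{eq:dfz}). The point is that $\CV^k$ satisfies the \emph{exact} recursion $\CV^{k+1}=\CV^k-\gamma^k\psi_{i^k}^k z_{i^k}^k$ (cf.~\eqref{dyz}), so the descent lemma applied to $F(\CV^k)$ produces no extra ``consensus residual'' term from the delayed averaging in~(S.4). Your route via $\bar{\bd{x}}^k$ would have to carry such a term and bound it separately. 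Relatedly, because the $\phi$-rescaling is absorbed into the step-size in Algorithm~\ref{alg:AsyTracking}, $z_{i^k}^k$ tracks $\xi_{i^k}^{k-1}\bar g^k$ with $\bar g^k=\sum_i\nabla f_i(x_i^k)$ (cf.~\eqref{eq:tracking-error}), not $\bar{\bd{z}}^k$; the natural decomposition is thus through $\xi_{i^k}^{k-1}\bar g^k$ rather than the unweighted mean of the $z$-variables.

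\textbf{Coupling of the errors.} You anticipate a small-gain argument as in the proof of Theorem~\ref{thm:linear_const}, and you correctly identify the non-monotone $\{\gamma^k\}$ as the pain point. The paper sidesteps this entirely: in the nonconvex/diminishing case it never invokes Theorem~\ref{thm:SGT}. Instead it uses the square-summability estimates of Lemma~\ref{lm:sqauresum}, which bound $\sum_t(\CE^t)^2$ and $\sum_t(\TE^t)^2$ by affine functions of $\sum_t(\NT^t)^2(\gamma^t)^2$ via a direct geometric-convolution bound (Lemma~\ref{lm:lihua}). Plugging these into the telescoped descent (Proposition~\ref{eq:descent}) yields~\eqref{eq:lyapunov_eq1}; since $\gamma^k\to 0$, the coefficient $\eta-\gamma^kC_4$ is eventually $\geq\eta/2$, giving $\sum_k(\NT^k)^2\gamma^k<\infty$, and then $\sum_k M_F(\bx^k)\gamma^k<\infty$ follows from Lemma~\ref{lm:merit} combined with Lemma~\ref{lm:sqauresum}. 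Your Stage~1 and Stage~4 match the paper's Lemma~\ref{Lemma_gamma_global} and the final contradiction argument exactly.

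In short: both approaches work, but the paper's choice of $\CV^k$ and the square-summability lemma replace your anticipated small-gain step with an elementary Gronwall-type estimate, which is why the difficulty you highlight does not materialize in the paper's proof.
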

\begin{proof}
See Sec. \ref{sec:sublinear}.	\vspace{-0.2cm}
\end{proof}

\section{Numerical Results}

We test \ASYSONATA/ on the least square regression and the binary classification problems.  The MATLAB code can be found at \url{https://github.com/YeTian-93/ASY-SONATA}.
\label{sec:simulation} 

\subsection{Least square regression}
In the LS problem,  each agent $i$ aims to estimate an unknown signal $\mathbf{x}_0 \in \mathbb{R}^n$ through   linear measurements   $\mathbf{b}_i = \mathbf{M}_i\mathbf{x}_0+\mathbf{n}_i$, where  $\mathbf{M}_i \in \mathbb{R}^{ d_i \times n}$ is the sensing matrix,  and $\mathbf{n}_i \in \mathbb{R}^{d_i}$ is the additive  noise.  The  LS problem can be written in the form of (P), with  each $f_i(\mathbf{x})= \|\mathbf{M}_i\mathbf{x}-\mathbf{b}_i\|^2$.  

\textbf{Data:} We fix $\bx_0$ with its elements being  i.i.d. random variables  drawn from the  standard normal distribution.  For each $\mathbf{M}_i$, we firstly generate all its elements as i.i.d. random variables drawn from the standard normal distribution, and then normalize the matrix by multiplying it with the reciprocal of its spectral norm.  The elements of the additive  noise $\mathbf{n}_i$  are i.i.d. Gaussian distributed, with zero mean   and variance equal to $0.04$.  We set $n = 200$ and $d_i = 30$ for each agent.
\textbf{Network model:} We simulate a network of $I = 30$ agents.  Each agent $i$ has $3$ out-neighbors; one of them belongs to a directed cycle graph connecting all the agents while the other two are picked uniformly at random. 
\textbf{Asynchronous model:} Agents are activated according to a cyclic rule where the order is randomly permuted at the beginning of each round.  Once activated,  every agent  performs  all the steps as in Algorithm \ref{alg:AsyTracking} and then sends  its updates  to all its out-neighbors. Each transmitted message  has  (integer) traveling time which is drawn uniformly at random within the interval $[0, D^{\text{tv}}]$.  We set $D^{\text{tv}} = 40.$  

\begin{figure}[t] 
 \centering{\includegraphics[scale=0.35]{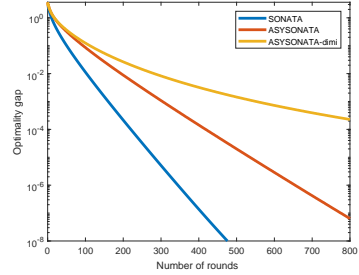}}
  \caption{\small Directed graphs: optimality gap $J^k$ versus number of rounds.}
  \label{fig:LS_dGraph}  \vspace{-.5cm}
\end{figure}

We test \ASYSONATA/   with a constant step size $\gamma = 3.5$, and also a diminishing step-size rule with each agent updating its local step size according to $\alpha^{t+1} = \alpha^{t} \left(1- 0.001\cdot \alpha^{t} \right)$ and $\alpha^0 = 3.5$; as   benchmark, we also simulate  its synchronous instance, with step size  $\gamma = 0.8$.   In Fig.~\ref{fig:LS_dGraph},  we plot $J^k \triangleq  ({1}/{I})\,\sqrt{\sum_{i=1}^I \| \mathbf{x}_i^k - \mathbf{x}^\star\|_2^2}$    versus the number of rounds (one round corresponds to one update of all the agents). The curves are averaged over $100$ Monte-Carlo simulations, with different graph and data instantiations. The plot clearly shows linear convergence of \ASYSONATA/with a constant step-size.  

\subsection{Binary classification}
In this subsection, we consider a strongly convex and nonconvex instance of  Problem~(P) over  digraphs, namely:  the regularized logistic regression    (RLR) and   the robust classification (RC) problems. 
 Both formulations  can be abstracted as:\vspace{-0.2cm} \begin{equation}\label{eq:P_sim}
\min_{\mathbf{x}} \frac{1}{\abs{\mathcal{D}}} \sum_{i=1}^I \sum_{j\in \mathcal{D}_i} V(y_j \cdot \ell_{ \mathbf{x}}(\mathbf{u}_j)) + \lambda \norm{\nabla \ell_{ \mathbf{x}}(\cdot)}_2^2,\vspace{-0.1cm} \end{equation} where   $\mathcal{D} = \cup_{i=1}^I \mathcal{D}_i$ is the set of indices of the data distributed across the agents, with agent $i$ owning  $\mathcal{D}_i$, and $\mathcal{D}_i \cap \mathcal{D}_l = \emptyset,$  for all $i \neq l$; $\mathbf{u}_j$ and $y_j\in \{-1,1\}$ are the feature vector and associated label of the $j$-th sample in $\mathcal{D}$;     $\ell_{ \mathbf{x}}(\cdot)$ is a linear function, parameterized by $\mathbf{x}$; and   $V$ is the loss function. More specifically,  if the RLR problem is considered, $V$    reads   $V(r) = \frac{1}{1+e^{-r}}$  while for the   RC problem,  we have \cite{zhao2010convex}\vspace{-0.2cm}
\begin{align*}
V(r) = 
\begin{cases}
0,  & \text{if }  r>1  ;  \\
\frac{1}{4} r^3 - \frac{3}{4} r + \frac{1}{2}, &  \text{if }  -1 \leq r \leq 1 ; \\
1,  & \text{if }  r<-1. 
\end{cases}
\end{align*} 
\noindent\textbf{Data:}  We use the following data sets for  the RLR and RC problems.  {(RLR):} We set $\ell_{ \mathbf{x}}(\mathbf{u}) = \mathbf{x}^\top \mathbf{u}$, $n={100}$, each $\abs{\mathcal{D}_i}=20,$ and $\lambda = 0.01.$  The underlying statistical model is the following:  We  generated the ground truth $\widehat{\mathbf{x}}$ with i.i.d.  $\mathcal{N}(0,1)$ components;  each training  pair $(\mathbf{u}_j,y_j)$ is generated independently, with  each element of $\mathbf{u}_j$ being  i.i.d.  $\mathcal{N}(0,1)$ and $y_j$ is set as $1$ with probability $V(\ell_{\widehat{\bx}}(\mathbf{u}_j)),$   and $-1$ otherwise. {(RC):}  We use the Cleveland Heart Disease Data set with 14 features \cite{Dua:2019}, preprocessing it by deleting observations with missing entries, scaling features between 0-1, and distributing the data to agents evenly.  We set $\ell_{ \mathbf{x}}(\mathbf{u}) = \mathbf{e}_{15}^\top \mathbf{x}  + \sum_{d=1}^{14}\mathbf{e}_d^\top \mathbf{x} \,\mathbf{e}_d^\top \mathbf{u}.$  
\textbf{Network model:}  We simulated a digraph of $I = 30$ agents.  Each agent has $7$ out-neighbors; one of them belongs to a directed cycle connecting all the agents while the other $6$ are picked uniformly at random.  One row and one column stochastic matrix with  uniform weights are generated. 
\textbf{Asynchronous model:}  a) Activation lists are generated by concatenating {\it random rounds.}  To generate one round, we first sample its length uniformly from the interval $[I, T]$, with  $T=90.$  Within a  round, we first have each agent appearing exactly once and then sample agents uniformly for the remaining spots.  Finally a random shuffle of the agents order is performed on each round; b) Each transmitted message  has  (integer) traveling time which is sampled uniformly from the interval $[0, D^{\text{tv}}]$, with $D^{\text{tv}}=90$.

We compare the performance of our algorithm with AsySubPush \cite{assran2018asynchronous} and AsySPA \cite{zhang2018asyspa}, which   appeared online  during the revision process of our paper.  AsySubPush   and AsySPA differ from \ASYSONATA/ in the following aspects: i) they do  not employ any gradient tracking mechanism; 
 ii) they cannot handle packet losses and purge out old information from the system (information is used as it is received); iii) when $F$ is strongly convex, they provably converge at {\it sublinear} rate; and iv) they cannot handle nonconvex $F$.   
The step sizes of all algorithms are manually tuned to obtain the best practical performance.  We run two instances of  \ASYSONATA/, one employing    a constant step size $\gamma=0.4$ and the other one using   the diminishing step size rule $\alpha^{t+1} = \alpha^{t} \left(1- 0.001\cdot \alpha^{t} \right)$, where $\alpha^0 = 0.5$ and  $t$ is the local iteration counter.  For AsySubPush (resp. AsySPA)  we set, for each agent $i$, $\alpha_i=0.0001$ (resp. $\rho(k)= {c}/{\sqrt{k}}$ with $c=0.01$) in RLC and $\alpha_i=0.00001$ (resp. $\rho(k)={c}/{\sqrt{k}}$ with $c=0.001$) in RC. 
The result is averaged over 20 Monte Carlo experiments with different digraph instances, and is presented in Fig.~\ref{fig:simul};   for each algorithm, we plot the merit functions $M_{\text{sc}}$ (left panel) and $M_{\text{F}}$ (right panel) evaluated in the generated trajectory   versus the global iteration counter $k$.   Consistently with the convergence theory, \ASYSONATA/ with a constant step size exhibits a linear convergence rate. Also, \ASYSONATA/ outperforms the other two algorithms; this is mainly due to i) the presence in  \ASYSONATA/    of an asynchronous gradient tracking mechanism which provides, at each iteration,  a better estimate of   $\nabla F$; and ii) the possibility in \ASYSONATA/ to discard old information  when received after a newer one [cf. (\ref{eq:purge_old})].
 
\begin{figure}[t!] 
 \centering{\includegraphics[scale=0.35]{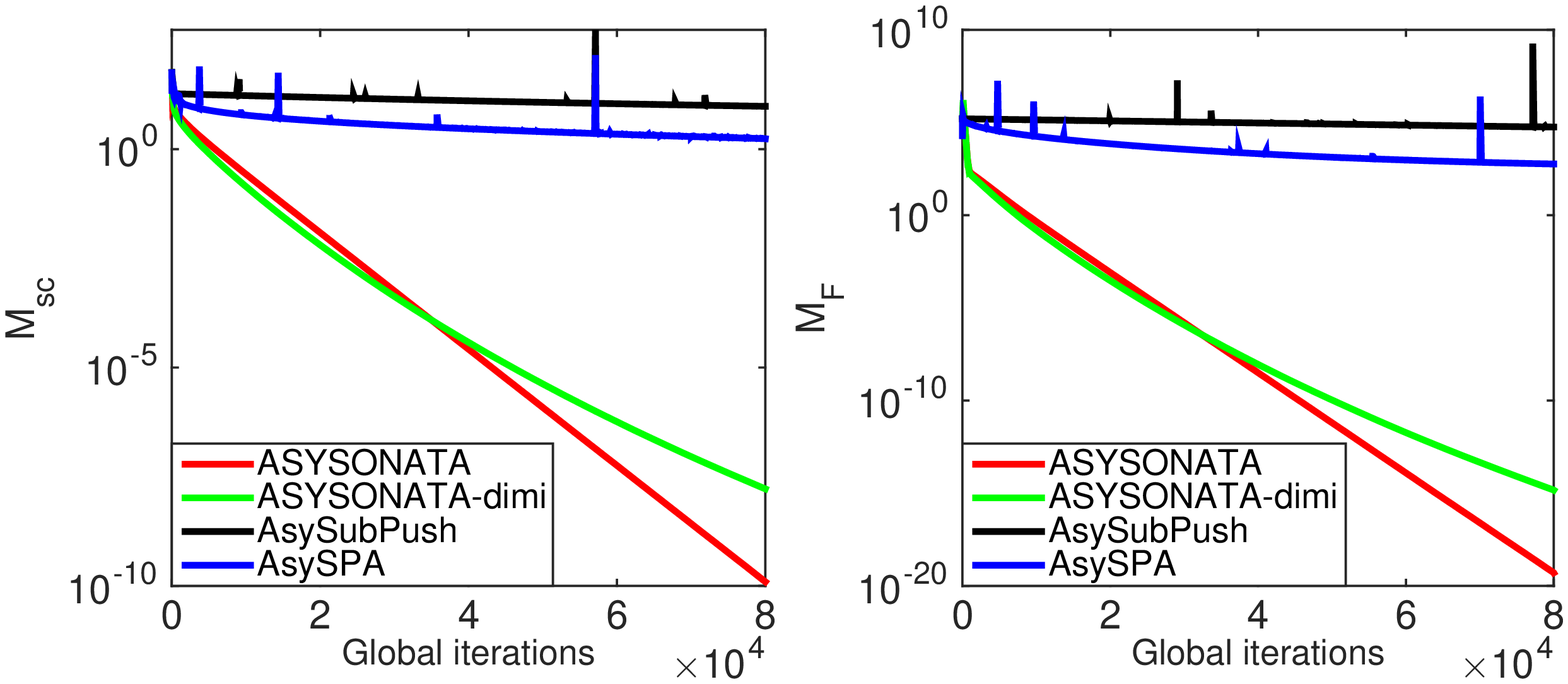}}\vspace{-.3cm}
  \caption{\small L: regularized logistic regression; R: robust classification.}
  \label{fig:simul}  \vspace{-.1cm}
\end{figure}

\section{Convergence Analysis of  \PertAvgname/}\label{sec:pertavg}
 
We prove Theorem \ref{thm:track}; we
 assume   $n=1$, without loss of generality. 
The proof   is organized in the following two steps. \textbf{Step 1:} We first reduce  the asynchronous agent system   to a synchronous ``augmented'' one with no delays. This will be done adding  virtual agents to the graph $\mathcal{G}$ along with their state variables, so that    \PertAvgname/ will be rewritten   as a  (synchronous) perturbed    push-sum algorithm   on the augmented graph.  While this idea was first explored in  \cite{dominguez2011distributed1,nedic2010convergence},   there are some important differences between the proposed enlarged systems and those used therein, see   Remark \ref{rmk_augmented_graph}. \textbf{Step 2:} We conclude the proof   establishing convergence of the   perturbed    push-sum algorithm built in Step 1. \vspace{-0.3cm}

\subsection{Step 1:  Reduction to a synchronous perturbed push-sum}


\subsubsection{The augmented graph}
We begin constructing the  augmented graph--an enlarged agent system  obtained adding virtual agents  to the original graph  $\mathcal{G}=(\mathcal{V},\mathcal{E})$.   Specifically, we associate to   each   edge  $(j,i) \in \mathcal{E}$  an ordered set of virtual nodes (agents), one for each of the possible delay values, denoted with a slight abuse of notation by $(j,i)^0,(j,i)^1,\ldots, (j,i)^D$; see Fig.~\ref{fig:original}.   Roughly speaking, these virtual nodes store the ``information on fly'' based upon  its associated delay, that is, the information that has been generated by  $j\in  \mathcal{N}^{\text{in}}_i$  for $i$  but not used (received) by $i$ yet. Adopting the terminology in \cite{nedic2010convergence},   nodes in the original graph  $\mathcal{G}$ are termed {\it computing agents} while the virtual nodes will be called  {\it noncomputing agents}. 
 With a slight abuse of notation, we define the set of computing and noncomputing  agents   as $\widehat{\mathcal{V}} \triangleq \mathcal{V} \cup \{(i,j)^d \lvert \, (i,j) \in \mathcal{E},\, d = 0,1,\ldots, D\} $, and its cardinality as $S \triangleq \abs{\widehat{\mathcal{V}}} = \left( I+(D+1) \abs{\mathcal{E}} \right)$.  We now identify the neighbors of each agent in this augmented systems. Computing agents no longer communicate among themselves; each $j\in \mathcal{V}$ 
 can only send information to the noncomputing nodes $(j,i)^0$, with $i\in \mathcal{N}^{\text{out}}_j$. Each noncomputing agent  $(j,i)^d$ can either  send information to the next noncomputing agent, that is  $(j,i)^{d+1}$ (if any), or to the computing agent  $i$; see Fig.~\ref{fig:original}(b).\begin{figure}
\centering
\includegraphics[scale=0.3]{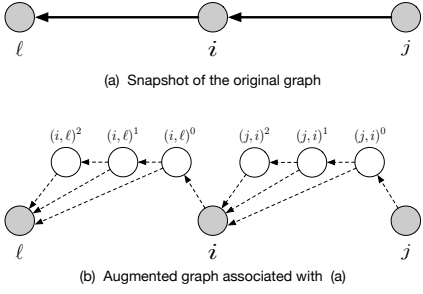}\vspace{-0.3cm}
\caption{Example of augmented graph, when the maximum delay $D=2$;  three noncomputing agents  are added for each edge $(j,i)\in \mathcal{E}$.}
\label{fig:original}\vspace{-0.5cm}
\end{figure}

To  describe the information stored by the agents in the augmented system at each iteration, let us first   introduce the following quantities:   $\mathcal{T}_i \triangleq \left\{ k  \, \big\vert \, i^{k} =i   , \, k\in \mathbb{N}_0\right\}$ is the set of global iteration indices at which the computing agent $i\in \mathcal{V}$ wakes up;  and, given $k\in \mathbb{N}_0$, let  $\mathcal{T}_i^{k} \triangleq \left\{ t\in \mathcal{T}_i \, \big\vert \, t \leq k  \right\}$.  
It  is not difficult to conclude from \eqref{eq:alg_rho} and \eqref{eq:alg_buffer} that  \vspace{-.1cm}
\begin{equation}\label{eq:rho_z}
 \hspace{-0.5cm}\rho_{ij}^{k}  =  \!\sum_{t \in \mathcal{T}^{k-1}_j} a_{ij}z^{t + 1/2}_{j}  \,\,\,\text{and}\,\,\,
 \tilde{\rho}^{k}_{ij} =  \!\rho_{ij}^{\tau_{ij}^{k-1}},\quad   (j,i)\in \mathcal{E}.\vspace{-.3cm}
\end{equation} 
At iteration $k=0$, every computing agent  $i$ stores $z_i^0$, whereas the values of the noncomputing agents are initialized to $0.$ 
At the beginning of iteration $k$, every computing agent  $i$  will store   $z_i^k$ whereas  every noncomputing agent $(j,i)^d$, with $0\leq d \leq D-1$, stores  the mass $a_{ij}z_j$  (if any)   generated by $j$ for $i$ at iteration  $k-d-1$ (thus $k-d-1 \in \mathcal{T}_j^{k-1}$), i.e., $a_{ij}z_j^{k-(d+1)+1/2}$ (cf. Step 3.2), and not been used by  $i$ yet (thus $k-d > \tau_{ij}^{k-1}$); otherwise it stores $0$. Formally, we have
				\begin{align} \hspace{-0.2cm}  z^k_{(j,i)^d} \triangleq  & a_{ij}z_j^{t + 1/2 } \nonumber \\
				\label{eq:virtual_info_d}			& \cdot \mathbbm{1}{\left[ t = k-d-1 \in \mathcal{T}_j^{k-1}\,\,  \& \,\, t +1 > \tau_{ij}^{k-1} \right].}  \end{align}
The virtual node $(j,i)^D$ cumulates  all the masses $a_{ij}z_j^{k-(d+1)+1/2}$ with   $d \geq D$, not  received by $i$ yet: 
\begin{equation}\label{eq:virtual_info}
\begin{aligned}
&   z^k_{(j,i)^D} \triangleq \sum_{t \in  \mathcal{T}_j^{k-D-1},\, t +1 > \tau_{ij}^{k-1}} a_{ij}z_j^{t + 1/2}.
\end{aligned}
\end{equation}
We   write next \PertAvgname/ on the augmented graph in terms of the $z$-variables of both the computing and noncomputing agents, absorbing the $(\rho, \tilde{\rho})$-variables using \eqref{eq:rho_z}-\eqref{eq:virtual_info}. 

\noindent\textbf{The sum-step over the augmented graph.}
 In the sum-step, the update of the $z$-variables of the computing agents reads:
 \vspace{-0.1cm}\begin{subequations} \label{eq:pf_sum}
\begin{align} 
   & z_{i^k}^{k+\frac{1}{2}}    =   z_{i^k}^{k}  +  \displaystyle \sum_{j \in \mathcal{N}_{i^k}^{\text{in}}}  \left({\rho}^{\tau_{i^k j}^k}_{i^k j}-\tilde{\rho}_{i^k j}^k\right) + \epsilon^k \vspace{-0.2cm}\nonumber\\ 
 &\,\,\quad  \overset{\eqref{eq:rho_z}-\eqref{eq:virtual_info}}{=}  z_{i^k}^{k}  +  \displaystyle \sum_{j \in \mathcal{N}_{i^k}^{\text{in}}} \sum_{d=k - \tau_{i^k j}^k}^D z^k_{(j,i^k)^d}+ \epsilon^k; \label{eq:pf_sum_1}
 \end{align}
\begin{equation} 
   z^{k+\frac{1}{2}}_{j} = z^{k }_{j},  \quad  j \in \mathcal{V}\setminus \{i^k\} .\label{eq:pf_sum_2}\vspace{-0.2cm}
 \end{equation}
 In words, node $i^k$ builds the update $z_{i^k}^{k} \!\!\to\!\! z_{i^k}^{k+\frac{1}{2}}$ based upon the masses transmitted by the noncomputing agents    
 $(j,i^k)^{k - \tau_{i^k j}^k},(j,i^k)^{k - \tau_{i^k j}^k+1},\ldots,$ $(j,i^k)^D$ [cf. \eqref{eq:pf_sum_1}].  All the other computing agents keep their masses unchanged [cf. \eqref{eq:pf_sum_2}].  The  updates of the noncomputing agents is set to
 \begin{align}
 & z^{k+\frac{1}{2}}_{(j,i^k)^d} \triangleq 0,  \quad   d=k - \tau_{i^k j}^k,\ldots,D,\quad   j \in \mathcal{N}_{i^k}^{\text{in}};\label{eq:pf_sum_3}\smallskip \\
& z^{k+\frac{1}{2}}_{(j^\prime,i)^\tau} \triangleq z^{k}_{(j^\prime,i)^\tau}, \quad   \text{for all the other }   (j^\prime,i)^\tau\in \widehat{\mathcal{V}}.\label{eq:pf_sum_4}
\vspace{-0.4cm}
\end{align} \end{subequations}
The  noncomputing agents in  \eqref{eq:pf_sum_3}    set  their    variables to zero (as they transferred their  masses to   $i^k$) while  the  other   noncomputing agents  keep their   variables unchanged [cf.\,\eqref{eq:pf_sum_4}].  Fig. \ref{fig:sum} illustrates the sum-step over the augmented graph.   
\begin{figure}[t]
\centering
\includegraphics[scale=0.3]{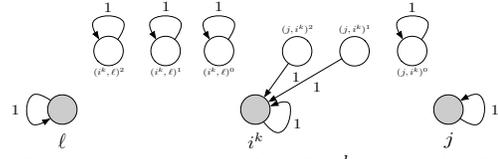} \vspace{-0.3cm}
\caption{Sum step on the augmented graph:  $\tau_{i^k j}^k = k-1$ (delay one); the two noncomputing agents, $(j,i^k)^1$ and $(j,i^k)^2$,  send  their  masses to $i^k.$}
\label{fig:sum}\vspace{-0.3cm}
\end{figure}
\begin{figure} 
\centering
\includegraphics[scale=0.3]{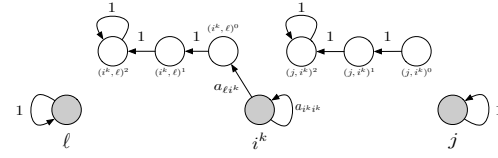}\vspace{-0.2cm}
\caption{Push step on the augmented graph:  Agent $i^k$   keeps $a_{i^ki^k}z_{i^k}^{k+1/2}$   while sending $a_{\ell i^k}z_{i^k}^{k+1/2}$ to the virtual nodes $(i^k,\ell)^0$,   $\ell \in \mathcal{N}_{i^k}^{\text{out}}$.}\vspace{-0.3cm}
\label{fig:push}
\end{figure}

\noindent\textbf{The push-step over the augmented graph.}
 In the push-step, the update of the $z$-variables of the computing agents reads:\vspace{-0.2cm}
 \begin{subequations}\label{eq:pf_push}
\begin{align}
   &z_{i^k}^{k+1} = a_{i^k i^k}\,z_{i^k}^{k+\frac{1}{2}}; \label{eq:pf_push_1}\\
    & z^{k+1}_{j} = z^{k+ \frac{1}{2}}_{j}, \qquad \text{ for } j \in \mathcal{V}\setminus \{i^k\}. \label{eq:pf_push_2}
\end{align}
In words, agent $i^k$ keeps the portion  $a_{i^k i^k} {z}_{i^k}^{k+ \frac{1}{2}}$  of the new generated mass    [cf. \eqref{eq:pf_push_1}] whereas  the other computing agents  do not change their variables [cf. \eqref{eq:pf_push_2}].  The  noncomputing agents update as:   \begin{align}
& z^{k+1}_{(i^k,\ell)^{0}} \triangleq  a_{\ell i^k}\, z_{i^k}^{k+1/2}, \quad   \ell \in \mathcal{N}_{i^k}^{\text{out}}; \label{eq:pf_push_5}\\
& z^{k+1}_{(i,j)^{0}} \triangleq 0, \quad  (i,j) \in \mathcal{E},\quad   i \neq i^k; \label{eq:pf_push_6}\\
& z^{k+1}_{(i,j)^{d}} \triangleq z^{k+\frac{1}{2}}_{(i,j)^{d-1}},  \quad d= 1 ,\ldots, D-1, \quad   (i,j) \in \mathcal{E}; \label{eq:pf_push_4} \\
& z^{k+1}_{(i,j)^{D}} \triangleq  z^{k+\frac{1}{2}}_{(i,j)^{D}} + z^{k+\frac{1}{2}}_{(i,j)^{D-1}}, \quad   (i,j) \in \mathcal{E}. \label{eq:pf_push_3} 
 \end{align}
 \end{subequations}
 In words, the computing agent $i^k$ pushes its masses  $a_{\ell i^k} {z}_{i^k}^{k+\frac{1}{2}}$    to the noncomputing agents $(i^k, \ell)^0$, with   $\ell \in \mathcal{N}_{i^k}^{\text{out}}$ [cf. \eqref{eq:pf_push_5}]. As the other noncomputing agents $(i,j)^{0}$,   $i \neq i^k$, do not receive any mass for their associated computing agents, they set their variables to zero [cf. \eqref{eq:pf_push_6}]. Finally the other  noncomputing agents $(i,j)^d$, with $0\leq d \leq D-1$, transfers  their mass to the next  noncomputing node $(j,i)^{d+1}$ [cf. \eqref{eq:pf_push_3}, \eqref{eq:pf_push_4}]. This push-step   is  illustrated in Fig. \ref{fig:push}. 
 
 The following result establishes   the equivalence between the update of the enlarged system with that of Algorithm~\ref{alg:pertavg}.

\begin{proposition}\label{prop_equivalence} Consider the setting of Theorem \ref{thm:track}.  The values of the $z$-variables of the computing agents in \eqref{eq:pf_sum}-\eqref{eq:pf_push}   coincide with those of the $z$-variables generated by \PertAvgname/  (Algorithm~\ref{alg:pertavg}), for all iterations $k\in \mathbb{N}_0$.  \end{proposition}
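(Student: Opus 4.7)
The plan is to prove Proposition~\ref{prop_equivalence} by strong induction on $k\in\mathbb{N}_0$, carrying along a slightly stronger joint invariant that controls \emph{both} the computing and the noncomputing agents' states:
\textbf{(IH-a)} for every $i\in\mathcal{V}$, the value $z_i^k$ produced by \eqref{eq:pf_sum}--\eqref{eq:pf_push} equals the $z_i^k$ produced by Algorithm~\ref{alg:pertavg}; and
\textbf{(IH-b)} for every $(j,i)\in\mathcal{E}$ and $d\in\{0,\dots,D\}$, the value $z_{(j,i)^d}^k$ of the noncomputing agent equals the right-hand side of \eqref{eq:virtual_info_d} (resp.\ \eqref{eq:virtual_info}) evaluated at the same $k$, with the $\mathcal{T}_j^{k-1}$ and $\tau_{ij}^{k-1}$ of Algorithm~\ref{alg:pertavg}. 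The base case $k=0$ is immediate from the two initializations (all virtual variables are $0$, $z_i^0$ agrees, and $\tau_{ij}^{-1}=-D$ renders the indicator in \eqref{eq:virtual_info_d} false).

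For the inductive step, the crucial algebraic fact to establish first is the identity
\begin{equation}\label{eq:key_identity}
\rho_{i^k j}^{\tau_{i^k j}^k}-\tilde\rho_{i^k j}^{k}\;=\;\sum_{d=k-\tau_{i^k j}^k}^{D} z_{(j,i^k)^d}^{k},\qquad j\in\mathcal{N}_{i^k}^{\text{in}}.
\end{equation}
This follows by substituting \eqref{eq:virtual_info_d}--\eqref{eq:virtual_info} on the right (using (IH-b)), performing the change of variable $t=k-d-1$, and recognizing that the resulting index set $\{t\in\mathcal{T}_j:\tau_{i^k j}^{k-1}\le t\le \tau_{i^k j}^k-1\}$ is precisely the set of activation epochs of $j$ whose contributions appear in $\rho_{i^k j}^{\tau_{i^k j}^k}$ but not in $\tilde\rho_{i^k j}^{k}=\rho_{i^k j}^{\tau_{i^k j}^{k-1}}$ (by \eqref{eq:rho_z}); here Assumption~\ref{ass:delays}(ii) guarantees $k-\tau_{i^k j}^k\le D$, so the sum in \eqref{eq:key_identity} is well defined. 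Armed with \eqref{eq:key_identity}, the augmented sum-step \eqref{eq:pf_sum_1} becomes term-by-term identical to step \texttt{(S.3.1)} of Algorithm~\ref{alg:pertavg}, and \eqref{eq:pf_sum_2} trivially matches the inactivity of $j\neq i^k$. Then the push-step \eqref{eq:pf_push_1}--\eqref{eq:pf_push_2} is literally step \texttt{(S.3.2)}, so (IH-a) propagates to $k+1$.

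It remains to propagate (IH-b). This is pure bookkeeping: one verifies that the formulas \eqref{eq:virtual_info_d}--\eqref{eq:virtual_info} \emph{evaluated at $k+1$} (with the updated $\tau_{i^k j}^{k}$ playing the role of the new $\tau_{ij}^{(k+1)-1}$) coincide with the outputs of the augmented dynamics. The main cases are:
\textbf{(i)} for $(j,i^k)^d$ with $j\in\mathcal{N}_{i^k}^{\text{in}}$ and $d\ge k-\tau_{i^k j}^k$, the sum-step \eqref{eq:pf_sum_3} empties the slot, so after the push-shift \eqref{eq:pf_push_4}/\eqref{eq:pf_push_3} the new value is $0$, consistent with the fact that the updated indicator in \eqref{eq:virtual_info_d} fails because $t+1>\tau_{i^k j}^k$ is violated (recall $\tau_{i^k j}^k\ge k-d$ in this regime);
\textbf{(ii)} for freshly-created slots $(i^k,\ell)^0$, $\ell\in\mathcal{N}_{i^k}^{\text{out}}$, the push-step \eqref{eq:pf_push_5} writes $a_{\ell i^k}z_{i^k}^{k+1/2}$, matching \eqref{eq:virtual_info_d} at $k+1$ with $t=k\in\mathcal{T}_{i^k}^k$ and the new $\tau_{\ell i^k}^k$ being $\le k$;
\textbf{(iii)} for all other $(i,j)^d$ the shift \eqref{eq:pf_push_4} (or the $D$-absorption \eqref{eq:pf_push_3}) precisely realizes the $(k,d)\mapsto(k+1,d+1)$ re-indexing built into \eqref{eq:virtual_info_d}--\eqref{eq:virtual_info}, while \eqref{eq:pf_push_6} zeros the $d=0$ slots that no new mass was directed to.

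The heart of the proof is the identity \eqref{eq:key_identity}; everything else is careful index tracking. The one subtlety to be alert to is that the delay cap $D$ is enforced \emph{in two different ways} on the two sides---via $\tau_{ij}^k\ge k-D$ on the algorithm side, and via the aggregating sink $(j,i)^D$ in \eqref{eq:virtual_info}---so the absorption rule \eqref{eq:pf_push_3} must be checked to preserve this accumulation under the push-shift; doing so just requires splitting \eqref{eq:virtual_info} according to whether $t$ falls below $k-D$ at iteration $k$ but above $(k+1)-D-1$ at iteration $k+1$, which is a finite boundary case handled by the $+z_{(i,j)^{D-1}}^{k+1/2}$ term in \eqref{eq:pf_push_3}.
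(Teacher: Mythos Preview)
Your proposal is correct and follows essentially the same route as the paper's proof: both verify that the computing agents' updates match by the identity $\rho_{i^k j}^{\tau_{i^k j}^k}-\tilde\rho_{i^k j}^{k}=\sum_{d=k-\tau_{i^k j}^k}^{D} z_{(j,i^k)^d}^{k}$ (which the paper embeds directly into the derivation of \eqref{eq:pf_sum_1} via \eqref{eq:rho_z}--\eqref{eq:virtual_info}), and then check that the noncomputing agents' push-step updates \eqref{eq:pf_push_5}--\eqref{eq:pf_push_3} are consistent with the defining formulas \eqref{eq:virtual_info_d}--\eqref{eq:virtual_info}. The paper's proof is a brief sketch that spot-checks one case (\eqref{eq:pf_push_5}) and asserts the rest follow similarly; your version makes the underlying induction and the case analysis for (IH-b) explicit, including the boundary handling at $d=D$, which is a welcome level of care but not a different idea.
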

\begin{proof}
By construction, the updates of the computing agents as in \eqref{eq:pf_sum_1}-\eqref{eq:pf_sum_2} and \eqref{eq:pf_push_1}-\eqref{eq:pf_push_2}   coincide with the z-updates in the sum- and push-steps of  \PertAvgname/, respectively. Therefore, we only need to show that the updates of the noncomputing agents  are consistent with those of the $(\rho,\tilde{\rho})$-variables in \PertAvgname/.  This follows   using \eqref{eq:rho_z} and noting that the updates  \eqref{eq:pf_push_5}-\eqref{eq:pf_push_3}  are compliant   with  \eqref{eq:virtual_info_d} and \eqref{eq:virtual_info}. For instance, by \eqref{eq:rho_z}-\eqref{eq:virtual_info_d}, it must be $
 z^{k+1}_{(i^k,j)^0} =   a_{ji^k}z_j^{t + 1/2 }   \cdot \mathbbm{1}[ t =k \in \mathcal{T}_{i^k}^{k} \text{ and } t +1 > \tau_{ji^k}^{k}]  
 = a_{ji^k}z_j^{k + 1/2 }$, which in fact coincides with \eqref{eq:pf_push_5}. 
The other equations \eqref{eq:pf_push_6}--\eqref{eq:pf_push_3} can be  similarly validated. 
\end{proof}

Proposition \ref{prop_equivalence} opens the way to study convergence of 
\PertAvgname/ via that of the synchronous perturbed push-sum algorithm \eqref{eq:pf_sum}-\eqref{eq:pf_push}. To do so, it is convenient to  rewrite  \eqref{eq:pf_sum}-\eqref{eq:pf_push}  in vector-matrix form, as described next.
 
We begin introducing an enumeration rule for the components of the z-vector in the augmented system. 
 We enumerate all the elements of $\mathcal{E}$ as $1,2,\ldots, \abs{\mathcal{E}}.$  The computing agents in   $\widehat{\mathcal{V}}$  are   indexed as in $\mathcal{V}$, that is, $1,2,\ldots, I$.   {Each noncomputing agent $(j,i)^d$ is indexed as $I+ d\abs{\mathcal{E}} + s$, where  $s$ is the index associated with  $(j,i)$ in $\mathcal{E}$;   we will use interchangeably $z_{I+ d\abs{\mathcal{E}} + s}$ and $z_{(j,i)^d}.$  We define the $z$-vector as $\fz = [{z}_i]_{i=1}^S$; and its value at iteration $k\in \mathbb{N}_0$ is denoted by $\fz^k$. }


The transition matrix $\C^k$ of the sum step is defined as
\[
\eC_{hm}^k  \triangleq 
\left\{
\begin{aligned}
& 1 ,  && \text{if }  m \in \{ (j, i^k)^{d} \mid  k - \tau_{i^k j}^k \leq d \leq D \} \\
&        && \text{and }  h = i^k; \\
& 1 ,  && \text{if }  m \in \widehat{\mathcal{V}} \setminus \{ (j, i^k)^{d} \mid  k - \tau_{i^k j}^k \leq d \leq D \} \\
&        && \text{and }  h = m; \\
& 0,   && \text{otherwise}.
\end{aligned}
\right.
\]
Let  $\perty^k \triangleq \epsilon^k \mathbf{e}_{i^k}$ be the  $S-$dimensional perturbation vector. The sum-step can be written in compact form as \vspace{-0.1cm}
\begin{equation}\label{eq:sum_matrix}
\fz^{k+\frac{1}{2}} = \C^k \fz^k + \perty^k.\vspace{-0.1cm}
\end{equation}

Define the transition matrix $\Sd^k$ of the push step as
\[
\eSd_{hm}^k  \triangleq 
\left\{
\begin{aligned}
& a_{j i^k} ,  && \text{if }  m = i^k \text{ and } h = (j,i^k)^0, j\in \mathcal{N}_{i^k}^{\text{out}} ; \\
& a_{i^k i^k} ,  && \text{if }  m =h= i^k; \\
& 1 ,  && \text{if }  m =h\in \mathcal{V} \setminus i^k; \\
&  1,      && \text{if }  m = (i,j)^d,\, h = (i,j)^{d+1}, \\
&        && (i,j) \in \mathcal{E}, \, 0 \leq d \leq D-1; \\
&  1,      && \text{if }  m =h= (i,j)^D,\,  (i,j) \in \mathcal{E}; \\
& 0,   && \text{otherwise}
\end{aligned}
\right.
\]
Then, the push-step can be written as 
\begin{align}\label{eq:push_matrix}
\fz^{k+1} = \Sd^k \fz^{k+\frac{1}{2}}.
\end{align}
Combing \eqref{eq:sum_matrix} and \eqref{eq:push_matrix}, yields 
\begin{align}\label{eq:tracking_dynamics-z}
\fz^{k+1} = \M^k \fz^k + {\gerror}^k,\quad   \M^k \triangleq \Sd^k \C^k,\quad {\gerror}^k \triangleq \Sd^k \perty^k.
\end{align}

The updates of  the $\phi$ variables and the definition of the $\phi$-vector are similar as above.   In summary,  the \PertAvgname/ algorithm can be rewritten in   compact form as\begin{subequations}\label{eq:augmented_consensus}
 \begin{align} 
&
 {\fz}^{k+1} = {\M} ^k {\fz}^{k}+{\gerror}^k,\quad  {\gerror}^k=   \epsilon^k\,\Sd^k  \,\mathbf{e}_{i^k};\label{eq:ordy_1} \\
& {\fm}^{k+1} = {\M}^k {\fm}^k;\label{eq:ordy_2}
\end{align}\end{subequations}
 with initialization: $z^{0}_i  \in \mathbb{R}$ and $\phi^{0}_i = 1$, for  $ i \in  \mathcal{V}$; and $z^{0}_i = 0$ and $\phi^{0}_i = 0$, for $i \in  \widehat{\mathcal{V}} \setminus \mathcal{V}$.\vspace{-0.2cm}

\begin{remark}[Comparison with \cite{dominguez2011distributed1,nedic2010convergence,lin2013constrained,BofCarli17}]\label{rmk_augmented_graph} 
The idea of reducing asynchronous (consensus) algorithms into synchronous ones over an   augmented  system   was already explored in \cite{nedic2010convergence,lin2013constrained,BofCarli17}.  However, there are several important differences between the models   therein and the proposed augmented graph.  First of all, \cite{BofCarli17} extends the analysis in \cite{dominguez2011distributed1} to deal with asynchronous activations, but both work consider only packet losses (no  delays). Second, our augmented graph model departs from that in \cite{nedic2010convergence,lin2013constrained} in the following aspects: i) in our model, the virtual nodes are associated with the {\it edges} of the original graph rather than the nodes;  ii) the noncomputing nodes store the  information {\it on fly} (i.e., generated by a sender but   not received by the intended receiver yet), while in \cite{nedic2010convergence,lin2013constrained}, each noncomputing agent owns a delayed copy of  the message generated by the associated computing agent; and iii) the dynamics \eqref{eq:augmented_consensus} over the augmented graph used to describe the P-ASY-SUM-PUSH procedure is different from those of the asynchronous consensus schemes \cite[(1)]{nedic2010convergence} and  \cite[(1)]{lin2013constrained}.
\end{remark}

 \subsection{Step 2: Proof of Theorem~\ref{thm:track}}
\subsubsection{Preliminaries}   
We begin studying some    properties of the matrix product $\M^{k:t}$, which will be instrumental to prove convergence of   the perturbed push-sum scheme \eqref{eq:augmented_consensus}.\vspace{-0.1cm} 

\begin{lemma}\label{lm:scramb}
Let $\{\M^k \}_{k\in \mathbb{N}_0}$ be the sequence of matrices in \eqref{eq:augmented_consensus},   generated by Algorithm~\ref{alg:pertavg}, under Assumption~\ref{ass:delays}, and with   $\mathbf{A}\triangleq (a_{ij})_{i,j=1}^I$ satisfying Assumption~\ref{assumption_weights} (i),(iii). The following hold: for all  $k\in \mathbb{N}_0$,  a)  $\M^k$ is column stochastic; and b)   the  entries of the  first $I$ rows  of  $\M^{k+K_1-1:k}$ are uniformly lower bounded  by $\eta \triangleq \lbm^{K_1}\in (0,1)$,    with $K_1 \triangleq (2I-1) \cdot T+I \cdot D $.\vspace{-0.1cm}
\end{lemma}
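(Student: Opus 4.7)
My plan is to treat (a) and (b) separately, with (b) being the substantive part. Throughout, I will use the convention that non-zero entries of $\mathbf{A}$ are bounded below by $\bar{a}\triangleq \bar{m}$ (as in Assumption~\ref{assumption_weights}(i)), and I will show that non-zero entries of every $\M^k$ lie in $[\bar{a},1]$.

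For part (a), I will verify column stochasticity factor-by-factor. For $\C^k$: by inspecting the three cases in its definition, every column $m\in\widehat{\mathcal{V}}$ contains exactly one nonzero entry, equal to $1$ (either in row $i^k$, if $m=(j,i^k)^d$ with $k-\tau_{i^kj}^k\le d\le D$, or in row $m$ itself otherwise). Hence $\mathbf{1}^\top\C^k=\mathbf{1}^\top$. For $\Sd^k$: the only interesting column is $m=i^k$, whose entries $a_{i^k i^k}$ and $\{a_{j i^k}\}_{j\in\mathcal{N}_{i^k}^{\text{out}}}$ sum to $1$ by the column stochasticity of $\mathbf{A}$ (Assumption~\ref{assumption_weights}(iii)); every other column is a standard basis vector. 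Thus $\Sd^k$ is column stochastic, and so is $\M^k=\Sd^k\C^k$.

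For part (b), the idea is to exhibit, for any $k$, any target computing agent $h\in\mathcal{V}$, and any source $m\in\widehat{\mathcal{V}}$, a ``path with positive flow'' of length exactly $K_1$ from $m$ to $h$ through the time-varying graph $\{\M^{k},\ldots,\M^{k+K_1-1}\}$. Since every non-zero entry of each $\M^t$ is at least $\bar{a}$, such a path automatically contributes at least $\bar{a}^{K_1}=\eta$ to the $(h,m)$-entry of $\M^{k+K_1-1:k}$, which yields the claim. The path is built in three segments: (S1) if $m$ is a noncomputing agent $(j,i)^d$, shift the mass along the delay chain (each shift contributes weight $1$ from $\Sd^t$) until it reaches position $D$; from position $D$ the absorption condition $d\ge t-\tau_{ij}^t$ is automatically satisfied because the update rule of $\tau$ together with Assumption~\ref{ass:delays}(ii) guarantees $t-\tau_{ij}^t\le D$, so the first subsequent wake-up of $i$ (which occurs within $T$ iterations by Assumption~\ref{ass:delays}(i)) absorbs it; this segment costs at most $D+T$ iterations and ends at computing agent $i$. (S2) From the current computing agent, use strong connectivity of $\mathcal{G}$ (Assumption~\ref{ass:strong_conn}) to fix a directed path of length $L\le I-1$ to $h$; for each hop $v_\ell\to v_{\ell+1}$, wait at most $T$ iterations for $v_\ell$ to wake up (push with weight $a_{v_{\ell+1}v_\ell}\ge \bar{a}$ into $(v_\ell,v_{\ell+1})^0$), then again at most $D+T$ iterations for the mass to be absorbed by $v_{\ell+1}$, for a per-hop cost of $2T+D$. (S3) Pad the remaining iterations with self-loops at $h$: one checks from the definitions that $(\M^t)_{h,h}\ge\bar{a}$ whether or not $i^t=h$ (it equals $a_{hh}$ when $i^t=h$ and $1$ otherwise).

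The arithmetic budget then matches exactly: $(D+T)+(I-1)(2T+D)=(2I-1)T+ID=K_1$, which accounts for the precise definition of $K_1$ in the statement. Every iteration in the constructed path contributes a factor at least $\bar{a}$, so the total weight is at least $\bar{a}^{K_1}=\eta\in(0,1)$. I expect the main technical obstacle to be the bookkeeping in segment (S1) and in each hop of (S2): one must be careful that the absorption indicator in $\C^t$ is actually $1$ along the chosen path. The key enabler is the observation that once the mass has been shifted into position $D$, the bound $t-\tau_{ij}^t\le D$ (a consequence of $\tau_{ij}^t\ge t-D$, which follows from Assumption~\ref{ass:delays}(ii) and the $\max$ in step \texttt{(S.2)} of Algorithm~\ref{alg:pertavg}) makes absorption automatic at the next activation of $i$. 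All other transitions—shifts along the delay chain and self-loops at a computing agent—have unambiguous lower bounds on their entries, so the path construction carries through without further complications.
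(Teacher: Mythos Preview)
Your approach is essentially the same as the paper's—path-tracing through the time-varying augmented graph—just organized by segments (S1)--(S3) rather than by the paper's four cases on the type of the pair $(m,h)$. The arithmetic $(D+T)+(I-1)(2T+D)=K_1$, the use of self-loops for padding, and the verification of column stochasticity are all correct and match the paper.

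There is, however, one concrete gap in segment (S1), and the same issue recurs inside each hop of (S2). You assert that the shift along the delay chain always contributes weight $1$, but since $\M^t=\Sd^t\C^t$ and the only row of $\Sd^t$ feeding into $(j,i)^{d+1}$ is row $(j,i)^d$, one has $(\M^t)_{(j,i)^{d+1},(j,i)^d}=(\C^t)_{(j,i)^d,(j,i)^d}$. This diagonal entry of $\C^t$ equals $0$ precisely when $i^t=i$ and $d\ge t-\tau_{ij}^t$: in that case the column $(j,i)^d$ of $\C^t$ has its unique $1$ in row $i$, not in row $(j,i)^d$. So ``shift to position $D$, then absorb'' need not define a positive-weight path—the shift can be blocked by an early absorption. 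You correctly flag the bookkeeping as the main obstacle, but your proposed resolution (the bound $t-\tau_{ij}^t\le D$ at an activation of $i$) only covers absorption \emph{from} position $D$, not the passage \emph{to} position $D$.

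The fix is simple and is exactly what the paper does, via its case split on the position $d$ at which reception occurs: let $d^\star$ be the first position along the chain at which absorption happens (i.e., the first time $i$ wakes up and the current position satisfies the threshold). The preceding shifts then have weight $1$ \emph{precisely because} absorption has not yet occurred, and the absorbing step contributes $(\M^t)_{i,(j,i)^{d^\star}}\ge a_{ii}\ge\bar a$. If no absorption occurs before position $D$ is reached, your argument applies verbatim from there. Either way the mass reaches $i$ within $D+T$ iterations, so your overall budget and the bound $\bar a^{K_1}$ stand.
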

\begin{proof}
The lemma essentially proves that    $(\M^{k+K_1-1:k})^\top$ is a SIA (Stochastic Indecomposable Aperiodic) matrix  \cite{lin2013constrained}, by showing that  for any time length of $K_1$  iterations,
there exists a path from any node $m$ in the augmented graph to any computing node $h.$ While at a high level the proof shares some  similarities with   that of \cite[Lemma 2]{nedic2010convergence} and \cite[Lemma 5 (a)]{lin2013constrained}, there are important  differences due to the distinct modeling of our augmented system.  The complete proof is in Appendix~\ref{pf:Mscramb}.
\end{proof}

 The key result of this section is stated next and shows  that as $k-t$ increases,  $\widehat{\bA}^{k:t}$ approaches a column stochastic rank one matrix at a   linear rate.  Given  Lemma~\ref{lm:scramb}, the proof  follows the path of   \cite[Lemma~4, Lemma~5]{nedic2010convergence}, \cite[Lemma 4, Lemma 5(b,c)]{lin2013constrained} and thus is omitted.
 
  \begin{lemma}\label{lm:rate1} In the setting above, 
there exists  a sequence of stochastic vectors $\{{\bm{\xi}}^k\}_{k\in \mathbb{N}_0}$ such that,    for any $ k \geq t\in \mathbb{N}_0$ and   $i,\, j\in \{1, \cdots, S\}$, there holds\vspace{-0.1cm}
\begin{equation}\label{eq:Lemma_exp_decay}\abs{{\eM} ^{k:t}_{ij} -\xi^k_i} \leq C \rho ^{k-t}, \vspace{-0.2cm}\end{equation}
with\vspace{-0.2cm} $$ C \triangleq 2\frac{1+\lbm^{-K_1}}{1-\lbm^{K_1}}, \quad   \rho\triangleq (1-\lbm^{K_1})^{\frac{1}{K_1}}\in (0,1).$$  Furthermore, $\xi_i^k\geq \lbpsi$, for all  $i\in \mathcal{V}$ and $k\in \mathbb{N}_0$. 
\end{lemma}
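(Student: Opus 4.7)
My plan is to leverage Lemma~\ref{lm:scramb} via the machinery of the Dobrushin ergodicity coefficient for column-stochastic matrices. For a column-stochastic $P\in\mathbb{R}^{S\times S}$, define
$$\tau(P) \triangleq \tfrac{1}{2}\max_{j,j'}\sum_{i=1}^S |P_{ij}-P_{ij'}|.$$
Two standard facts will drive the argument: (i) $\tau(PQ)\leq \tau(P)\tau(Q)$ for column-stochastic $P,Q$, and (ii) if $P$ has a row whose entries are all $\geq \eta$, then for any two columns $p,q$, the overlap $\sum_i\min(p_i,q_i)\geq \eta$, so $\|p-q\|_1\leq 2(1-\eta)$, whence $\tau(P)\leq 1-\eta$. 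I will quickly verify (i)-(ii) in the proof, then invoke Lemma~\ref{lm:scramb}(b) to conclude $\tau\bigl(\M^{k+K_1-1:k}\bigr)\leq 1-\eta$ for every $k$.

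Next I will chain the contraction. For $k\geq t$, partition the interval $[t,k]$ into $m=\lfloor (k-t+1)/K_1\rfloor$ consecutive blocks of length $K_1$ plus a shorter residual, and apply submultiplicativity together with the trivial bound $\tau(\cdot)\leq 1$ on the residual block to obtain
$$\tau(\M^{k:t})\;\leq\;(1-\eta)^{m}\;\leq\;(1-\eta)^{(k-t+1)/K_1 - 1}\;=\;(1-\eta)^{-1}\rho^{\,k-t+1},$$
where $\rho=(1-\eta)^{1/K_1}=(1-\lbm^{K_1})^{1/K_1}$. This is the geometric contraction driving the rate in~\eqref{eq:Lemma_exp_decay}.

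For the limit vector $\xi^k$, I will argue in the spirit of \cite{nedic2010convergence,lin2013constrained}. Fix $k$ and any $j\in\{1,\ldots,S\}$, and write $\M^{k:t}\mathbf{e}_j = \M^{k:t'+1}\bigl(\M^{t':t}\mathbf{e}_j\bigr)$ for $t\leq t'\leq k$. Since $\M^{t':t}\mathbf{e}_j$ is a probability vector and $\M^{k:t'+1}v$ is a convex combination of the columns of $\M^{k:t'+1}$ (all within $2\tau(\M^{k:t'+1})$ of each other in $\ell_1$), one gets $\|\M^{k:t}\mathbf{e}_j - \M^{k:t'+1}\mathbf{e}_{j'}\|_1 \leq 2\tau(\M^{k:t'+1})$, uniformly in $j,j'$. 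The Cauchy criterion then yields a common limit as $t\to -\infty$ (extending the matrix sequence backward by any column-stochastic choice satisfying the scrambling property, e.g.\ by repeating a fixed block), which I will call $\xi^k$. The bound $|\M^{k:t}_{ij}-\xi^k_i|\leq C\rho^{k-t}$ with $C=2(1+\lbm^{-K_1})/(1-\lbm^{K_1})$ will then drop out by combining the contraction from Step~1 with the triangle inequality against the Cauchy tail; the $\lbm^{-K_1}$ and the factor~$2$ come from absorbing the extra block used in the Cauchy bound and from the conversion between $\tau$ and entry-wise bounds. Stochasticity of $\xi^k$ (non-negativity and sum equal to $1$) follows from column stochasticity of each $\M^{k:t}$, preserved under the limit. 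The lower bound $\xi^k_i\geq \lbpsi$ for $i\in\mathcal{V}$ is read off from Lemma~\ref{lm:scramb}(b): for any $j$, $\M^{k:k-K_1+1}_{ij}\geq \eta$ whenever $i\leq I$, so passing to the limit in the factorization $\M^{k:t}=\M^{k:k-K_1+1}\M^{k-K_1:t}$ gives $\xi^k_i\geq \eta$ on computing agents; one then sets $\lbpsi = \eta$.

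The main obstacle is the asymmetric minorization: Lemma~\ref{lm:scramb} only guarantees positive entries on the first $I$ (computing) rows of $K_1$-step products, not on the virtual-agent rows. This is, however, exactly consistent with the target conclusion ($\xi^k$ may vanish on virtual coordinates) and does not obstruct the Dobrushin contraction, since one positive row is already enough to certify $\tau(P)\leq 1-\eta$. The only real care needed is in defining $\xi^k$: the matrix sequence starts at $t=0$, so to interpret ``$\lim_{t\to -\infty}$'' rigorously I will either work with a backward extension of $\{\M^t\}$ that preserves the scrambling property (existence is trivial by concatenating a fixed admissible finite word), or equivalently define $\xi^k$ through the Cauchy completion on the subsequence $\{\M^{k:0}\pi\}_{k\geq 0}$ for an arbitrary fixed probability vector $\pi$; both routes yield the same object and the bound in~\eqref{eq:Lemma_exp_decay} follows identically.
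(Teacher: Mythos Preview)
Your proposal is correct and follows essentially the same Dobrushin--coefficient route as the references the paper defers to (\cite[Lemmas~4--5]{nedic2010convergence}, \cite[Lemmas~4, 5(b,c)]{lin2013constrained}): Lemma~\ref{lm:scramb}(b) supplies a uniformly positive row and hence $\tau(\M^{k+K_1-1:k})\leq 1-\eta$, submultiplicativity of $\tau$ yields the geometric rate $\rho=(1-\eta)^{1/K_1}$, and the backward-extension/Cauchy construction of $\xi^k$ together with the factorization $\M^{k:t}=\M^{k:k-K_1+1}\M^{k-K_1:t}$ gives both \eqref{eq:Lemma_exp_decay} and $\xi_i^k\geq\eta$ on $\mathcal V$. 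One small wording fix: your alternative ``Cauchy completion on $\{\M^{k:0}\pi\}_{k\geq 0}$'' should simply read $\xi^k\triangleq \M^{k:0}\pi$ (no limit in $k$ is needed, and that sequence is $k$-indexed, not $t$-indexed); with that convention the bound for $t\geq 1$ follows since $\xi^k$ is a convex combination of the columns of $\M^{k:t}$.
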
 

 \subsubsection{Proof of Theorem~\ref{thm:track}}  
 Applying \eqref{eq:augmented_consensus}  telescopically,  yields: 
${\fz}^{k+1} = {\M} ^{k:0} {\fz}^{0}+\sum_{l=1}^k {\M} ^{k:l} {\gerror}^{l-1}+{\gerror}^k$ and  ${\fm}^{k+1} = {\M}^{k:0} {\fm}^0,$
which using the column  stochasticity of $\M^{k:t}$, yields\vspace{-0.2cm}
\begin{align}
\label{eq:z_recur} 
\hspace{-0.4cm}\bd{1}^{\top} {\fz}^{k+1} = \bd{1}^{\top} {\fz}^{0}+\sum_{l=0}^k \bd{1}^{\top}  {\gerror}^{l}   ,\quad \bd{1}^{\top} {\fm}^{k+1}  = \bd{1}^{\top} {\fm}^0 = I. 
\end{align}
Using  (\ref{eq:z_recur}) and    $\phi_i^{k+1} \geq I \eta$, for all $i \in \mathcal{V}$ and $k \geq K_1-1$ [due to Lemma \ref{lm:scramb}(b)], we have: 
for $i\in\mathcal{V}$ and $k \geq K_1-1$,
\begin{align}
&\abs{\frac{z_i^{k+1}}{\phi_i^{k+1}} - \frac{\bd{1}^{\top}{\fz}^{k+1}}{I}} \leq  \frac{1}{I \eta} \abs{z_i^{k+1} - \frac{\phi_i^{k+1}}{I}(\bd{1}^{\top}{\fz}^{k+1})}  \nonumber \\
& \leq \frac{1}{I \eta} \abs{z_i^{k+1}  - \xi_i^{k}\bd{1}^{\top}{\fz}^{k+1}}    +  \frac{1}{I \eta}\abs{\left( \xi_i^{k}- \frac{\phi_i^{k+1}}{I} \right) \bd{1}^{\top}{\fz}^{k+1}}  \nonumber   \\
& \leq \frac{1}{I \eta} \abs{z_i^{k+1}  - \xi_i^{k}\bd{1}^{\top}{\fz}^{k+1}} \nonumber\\
& \quad + \frac{1}{I \eta} \abs{\xi_i^{k} - \frac{{\M}_{i,:}^{k:0} {\fm}^0}{I}}\cdot \abs{\bd{1}^{\top} {\fz}^{0}+\sum_{l=0}^k \bd{1}^{\top}  {\gerror}^{l}}  \nonumber\\
& \overset{\eqref{eq:Lemma_exp_decay}}{\leq} \frac{1}{I \eta} \abs{z_i^{k+1}  - \xi_i^{k}\bd{1}^{\top}{\fz}^{k+1}}  
  +  \frac{C \rho^k} {\sqrt{I} \eta }  \left( \norm{{\mathbf{z}}^{0}}+\sum_{l=0}^k \abs{\epsilon^{l}} \right)    \label{eq:track_1}
\end{align}
The next lemma provides a bound of  $\abs{z_i^{k+1}  - \xi_i^{k}\bd{1}^{\top}{\fz}^{k+1}}.$
\begin{lemma}\label{lm:track}
 Let $\{ \fz^k \}_{k=0}^{\infty}$ be the sequence generated by the  perturbed system \eqref{eq:ordy_1}, under Assumption~\ref{ass:delays},     $\bd{A} = (a_{ij})_{i,j=1}^I$ satisfying Assumption~\ref{assumption_weights} (i), (iii), and given     $\{\epsilon^{k}\}_{k\in \mathbb{N}_0}$.
 For any $i\in \mathcal{V}$ and $k\geq 0$, there holds\vspace{-0.2cm}
\begin{align}\label{eq:track_2} \abs{z_i^{k+1}  - \xi_i^{k}\bd{1}^{\top}{\fz}^{k+1}}    \leq  C_0 \left(\rho^{k}\norm{\mathbf{z}^0}+\sum_{l=0}^k\rho^{k-l}\abs{\epsilon^{l}} \right),\end{align}
with  $\{{\bm{\xi}}^k\}_{n\in \mathbb{N}_0}$  defined in Lemma~\ref{lm:rate1} and   $C_0 \triangleq C \sqrt{2S}/\rho.$
\end{lemma}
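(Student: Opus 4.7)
The plan is to unroll the linear recursion $\fz^{k+1} = \M^k \fz^k + \gerror^k$ and then compare coordinatewise against the ``limiting'' stochastic vector provided by Lemma~\ref{lm:rate1}. Iterating \eqref{eq:ordy_1} gives
\begin{equation*}
\fz^{k+1} = \M^{k:0} \fz^0 + \sum_{l=1}^{k} \M^{k:l} \gerror^{l-1} + \gerror^k ,
\end{equation*}
and since each $\M^k$ is column stochastic (Lemma~\ref{lm:scramb}(a)), summing yields $\bd{1}^\top \fz^{k+1} = \bd{1}^\top \fz^0 + \sum_{l=0}^k \bd{1}^\top \gerror^l$. Subtracting $\xi_i^k$ times the latter from the $i$-th entry of the former and regrouping isolates three pieces matched to the geometric template of Lemma~\ref{lm:rate1}:
\begin{equation*}
z_i^{k+1} - \xi_i^k \bd{1}^\top \fz^{k+1} = \sum_{j=1}^S (\eM^{k:0}_{ij} - \xi_i^k)\, z_j^0 \;+\; \sum_{l=1}^{k} \sum_{j=1}^S (\eM^{k:l}_{ij} - \xi_i^k)\, \gerror^{l-1}_j \;+\; \bigl(\gerror^k_i - \xi_i^k \bd{1}^\top \gerror^k\bigr).
\end{equation*}

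I would then invoke Lemma~\ref{lm:rate1}, which bounds $|\eM^{k:t}_{ij} - \xi_i^k|$ by $C\rho^{k-t}$ uniformly in $i,j$, together with two structural facts about the perturbation. First, $\gerror^l = \epsilon^l\, \Sd^l \mathbf{e}_{i^l}$ and $\Sd^l$ is column stochastic by construction (the push step conserves total mass), so $\|\gerror^l\|_1 = |\epsilon^l|$; this converts the inner vector-valued sums into the desired scalar template $\rho^{k-l}|\epsilon^l|$. Second, only the first $I$ coordinates of $\fz^0$ are nonzero, so a Cauchy--Schwarz bound gives $\sum_j |z_j^0| \leq \sqrt{I}\,\|\mathbf{z}^0\|$. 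For the terminal summand I cannot apply Lemma~\ref{lm:rate1} directly, since no accumulated matrix product sits in front of $\gerror^k$; I would instead bound it crudely as $|\gerror^k_i - \xi_i^k \bd{1}^\top \gerror^k| \leq |\gerror^k_i| + \xi_i^k\, |\bd{1}^\top \gerror^k| \leq 2|\epsilon^k|$, using $\xi_i^k \leq 1$ (stochasticity of $\bm{\xi}^k$) and $\|\Sd^k \mathbf{e}_{i^k}\|_\infty \leq 1$.

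Collecting the three contributions produces
\begin{equation*}
\bigl|z_i^{k+1} - \xi_i^k \bd{1}^\top \fz^{k+1}\bigr| \;\leq\; C \sqrt{I}\,\rho^k \|\mathbf{z}^0\| \;+\; \frac{C}{\rho} \sum_{l=0}^{k-1} \rho^{k-l} |\epsilon^l| \;+\; 2|\epsilon^k| ,
\end{equation*}
which I would finally merge into a single geometric envelope with prefactor $C_0 = C\sqrt{2S}/\rho$: the factor $\sqrt{2S}$ absorbs the $\sqrt{I}$ of the first term, while $C/\rho \geq 2$ (valid because $C\geq 2$ by its definition in Lemma~\ref{lm:rate1}) lets the isolated $2|\epsilon^k|$ be folded as the $l=k$ summand of the geometric sum. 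The only delicate point in this plan is precisely the handling of the last-step noise $\gerror^k$, which breaks the pattern of the first two sums; once that is dispatched by the direct bound above, the rest is a triangle-inequality decomposition combined with the uniform geometric approximation from Lemma~\ref{lm:rate1}.
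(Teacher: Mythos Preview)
Your proposal is correct and follows essentially the same approach as the paper: unroll \eqref{eq:ordy_1}, subtract $\xi_i^k \bd{1}^\top \fz^{k+1}$, apply the entrywise geometric bound of Lemma~\ref{lm:rate1} to each block, and treat the last-step noise $\gerror^k$ separately via the crude bound $|\gerror_i^k|+\xi_i^k|\bd{1}^\top\gerror^k|\le 2|\epsilon^k|$. The only cosmetic difference is the norm pairing: the paper applies Cauchy--Schwarz in $\ell_2$, bounding $\|\M_{i,:}^{k:l}-\xi_i^k\bd{1}^\top\|\le\sqrt{S}\,C\rho^{k-l}$ and $\|\gerror^{l}\|\le\|\Sd^l\||\epsilon^l|\le\sqrt{2}\,|\epsilon^l|$, whereas you use the $\ell_\infty$--$\ell_1$ H\"older pairing, exploiting column stochasticity of $\Sd^l$ to get $\|\gerror^l\|_1=|\epsilon^l|$ and paying $\sqrt{I}$ on $\|\fz^0\|_1$ instead; both routes land on the same constant $C_0=C\sqrt{2S}/\rho$.
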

\begin{proof}\vspace{-0.2cm}
\begin{equation*}
\begin{aligned}
&\abs{z_i^{k+1}  - \xi_i^{k}\bd{1}^{\top}{\fz}^{k+1}}  \overset{\eqref{eq:ordy_1}}{=}  \Bigg| \left({\M}_{i,:} ^{k:0} {\fz}^{0}+\sum_{l=1}^k {\M}_{i,:}^{k:l} {\gerror}^{l-1}+{\lgerror}_i^k \right)  \\ 
& \quad - \xi_i^{k}  \left( \bd{1}^{\top} {\fz}^{0}+\sum_{l=0}^k \bd{1}^{\top}  {\gerror}^{l} \right) \Bigg|  \leq \abs{{\lgerror}_i^k } + \abs{\bd{1}^{\top}  {\gerror}^{k} } \\
& \quad + \norm{{\M}_{i,:} ^{k:0} - \xi_i^{k}  \bd{1}^{\top} } \norm{{\fz}^{0}}  + \sum_{l=1}^k \norm{{\M}_{i,:}^{k:l} -  \xi_i^{k} \bd{1}^{\top} } \norm{{\gerror}^{l-1}}    \\
& \overset{\eqref{eq:Lemma_exp_decay}}{\leq} 
    \frac{ \sqrt{S}}{ \rho} C \left( \rho^k\norm{{\fz}^{0}}+\sum_{l=0}^k \rho^{k-l} \norm{ \Sd^l } \abs{\epsilon^{l}} \right) \\
 &  \overset{(a)}{\leq}   C_0  \left(\rho^{k}\norm{\mathbf{z}^0}+\sum_{l=0}^k\rho^{k-l}\abs{\epsilon^{l}} \right),
\end{aligned}
\end{equation*}
where  in (a)  we used   $\norm{\Sd^l } \leq \sqrt{\norm{\Sd^l }_1 \norm{\Sd^l }_{\infty}} \leq \sqrt{2}.$
\end{proof}
Combing Eq.~\eqref{eq:track_1} and \eqref{eq:track_2} leads to
\begin{equation*}
\begin{aligned}
\abs{\frac{z_i^{k+1}}{\phi_i^{k+1}} - \frac{\bd{1}^{\top}{\fz}^{k+1}}{I}} \leq C_1 \left(\rho^{k}\norm{\mathbf{z}^0}+\sum_{l=0}^k\rho^{k-l}\abs{\epsilon^{l}} \right),
\end{aligned}
\end{equation*}
where we defined $C_1 \triangleq C_0\cdot 2/(I\,\eta).$   

Recalling the definition of $\mathfrak{m}^{k}_{z} \triangleq  \sum_{i=1}^I {z}_i^{k} + \sum_{(j,i)\in \mathcal{E}} (\rho_{ij}^{k} - \tilde{\rho}_{ij}^{k})$, to complete the proof, it remains to show  that \vspace{-0.2cm}  \begin{equation}\label{eq:mean-equiv}
   	\mathfrak{m}^{k}_z \overset{(I)}{=}   \sum_{i=1}^I {{z}_i^0} + \sum_{t=0}^{k-1} \epsilon^t \overset{(II)}{=}  \bd{1}^{\top} \fz^k.
   \end{equation}
We prove next the equalities  (I) and (II)   separately.

 \textit{Proof of (I)}: Since $\mathfrak{m}^{0}_{z} = \sum_{i=1}^I {{z}_i^0}$,   it suffices to show that $\mathfrak{m}^{k+1}_{z} = \mathfrak{m}^{k}_{z} + \epsilon^k$ for all $k\in \mathbb{N}_0$.   
Since agent $i^k$ triggers $k \to k+1,$ we only need to show that
\begin{align*}
& z_{i^k}^{k+1} + \sum_{j\in  \mathcal{N}_{i^k}^{\text{in}}} ({\rho}_{{i^k}j}^{k+1} - \tilde{{\rho}}_{{i^k}j}^{k+1}) + \sum_{j\in  \mathcal{N}_{{i^k}}^{\text{out}}} (\rho_{j {i^k}}^{k+1} - \tilde{\rho}_{j {i^k}}^{k+1}) \nonumber\\
 = & z_{{i^k}}^{k} + \sum_{j\in  \mathcal{N}_{{i^k}}^{\text{in}}} (\rho_{{i^k}j}^{k} - \tilde{\rho}_{{i^k} j}^{k}) + \sum_{j\in  \mathcal{N}_{i^k}^{\text{out}}} (\rho_{j {i^k} }^{k} - \tilde{\rho}_{j {i^k}}^{k}) + \epsilon^k. 
\end{align*}
We have
\begin{align*}
& z_{i^k}^{k+ 1} + \sum_{j\in  \mathcal{N}_{i^k}^{\text{in}}} (\rho_{i^kj}^{k+1} - \tilde{\rho}_{i^k j}^{k+1}) + \sum_{j\in  \mathcal{N}_{i^k}^{\text{out}}} (\rho_{j i^k }^{k+1} - \tilde{\rho}_{j i^k}^{k+1}) \nonumber\\
\overset{(a)}{=}  &  a_{i^ki^k} z_{i^k}^{k+\frac{1}{2}} + \sum_{j\in  \mathcal{N}_{i^k}^{\text{in}}} (\rho_{i^kj}^{k} -  \rho_{i^kj}^{{\tau_{i^k j}^k}} )  \\ & +\sum_{j\in  \mathcal{N}_{i^k}^{\text{out}}} (\rho_{j i^k}^k  + a_{j i^k} z_{i^k}^{k+\frac{1}{2}} - \tilde{\rho}_{j i^k}^{k})  \nonumber\\
\overset{(b)}{=}  & z_{i^k}^{k+\frac{1}{2}} + \sum_{j\in  \mathcal{N}_{i^k}^{\text{in}}} (\rho_{i^kj}^{k} -  \rho_{i^kj}^{ {\tau_{i^k j}^k}} ) + \sum_{j\in  \mathcal{N}_{i^k}^{\text{out}}} (\rho_{j i^k}^k  - \tilde{\rho}_{j i^k}^{k}) \nonumber\\
 \overset{(c)}{=}  & z_{i^k}^{k} + \sum_{j \in \mathcal{N}_{i^k}^{ {\text{in}}}} (\rho_{i^kj}^{ {\tau_{i^k j}^k}} - \tilde{\rho}_{i^kj}^{k})   + \epsilon^k + \sum_{j\in  \mathcal{N}_{i^k}^{\text{in}}} (\rho_{i^kj}^{k} -  \rho_{i^kj}^{ {\tau_{i^k j}^k}} ) \\ & + \sum_{j\in  \mathcal{N}_{i^k}^{\text{out}}} (\rho_{j i^k}^k  - \tilde{\rho}_{j i^k}^{k}) \nonumber\\
  =  &  z_{i^k}^{k} + \sum_{j\in  \mathcal{N}_{i^k}^{\text{in}}} (\rho_{i^k j}^{k} - \tilde{\rho}_{i^k j}^{k}) + \sum_{j\in  \mathcal{N}_{i^k}^{\text{out}}} (\rho_{j i^k }^{k} - \tilde{\rho}_{j i^k}^{k}) + \epsilon^k,
\end{align*}
where  in (a) we used:  the definition of the push step, $\rho_{i^kj}^{k+1} = \rho_{i^kj}^{k}$ for all $j\in \mathcal{N}_{i^k}^{\text{in}}$, and $\tilde{\rho}_{j i^k}^{k+1} = \tilde{\rho}_{j i^k}^{k}$ for all $j\in \mathcal{N}_{i^k}^{\text{out}}$;    (b) follows from   $a_{i^ki^k} + \sum_{j\in \mathcal{N}_{i^k}^{\text{out}}} a_{ji^k}=1$;  and in (c), we used the sum-step.

\textit{Proof of (II):} Using  ~\eqref{eq:z_recur}, yields 
 $\bd{1}^{\top}\fz^{k+1} =  \bd{1}^{\top} {\fz}^{0}+\sum_{l=0}^k \bd{1}^{\top}  {\gerror}^{l}  =  \bd{1}^{\top} \fz^k + \bd{1}^{\top} \perty^k = \sum_{i=1}^I {{z}_i^0} + \sum_{t=0}^{k} \epsilon^t$.\qed

\section{ASY-SONATA--Proof of Theorem~\ref{thm:linear_const}}\label{sec:theory}  We organize the proof in the following steps: \textbf{Step 1:} We  introduce  and study convergence of an auxiliary perturbed consensus scheme,  which serves as a unified model for the descent and consensus updates in  ASY-SONATA--the main result is  summarized in Proposition \ref{lm:PAC}; \textbf{Step 2:} We introduce the consensus and gradient  tracking errors along with a suitably defined optimization error; and we derive bounds connecting these quantities, building on results in Step 1 and convergence of \PertAvgname/--see Proposition \ref{lm:error_bounds}.  The goal is to prove that the aforementioned errors   vanish at a linear rate.
To do so, \textbf{Step 3} introduces a general form of the small gain theorem--Theorem \ref{thm:SGT}--along with  some technical results, which allows us to establish the desired linear convergence through  the boundedness of the solution of an associated  linear system of inequalities.  \textbf{Step 4} builds such a linear system for the error quantities introduced in Step 2  and proves the boundedness of its solution, 
proving thus Theorem \ref{thm:linear_const}.  
The rate expression \eqref{eq:rate-expression} is derived in Appendix \ref{sec:linear_2}.  
Through the proof we assume  $n=1$ (scalar variables); and   define $C_L \triangleq \max_{i=1,\ldots,I} L_i$ and $ L \triangleq  \sum_{i=1}^I L_i.$\vspace{-0.3cm}

\subsection*{\bf Step 1: A perturbed asynchronous    {consensus} scheme}\label{sec:PAC}
We introduce a unified model to study the dynamics of the consensus and optimization errors in ASY-SONATA, which consists in pulling out the tracking update (Step 5) and treat the z-variables--the term $-\gamma^k z_{i^k}^k$ in (\ref{local-descent})--as   an exogenous perturbation $\delta^k$.  More specifically, consider the following scheme (with a slight abuse of notation, we use the same symbols as in ASY-SONATA): \vspace{-0.1cm}\begin{subequations}\label{eq:PAC}
\begin{align} 
& v_{i^k}^{k+1}=x_{i^k}^k + \delta^k, \\
& x_{i^k}^{k+1} = w_{i^k i^k} v_{i^k}^{k+1} +  \sum_{j\in \mathcal{N}_{i^k}^{\text{in}}} w_{{i^k}j} v_{j}^{k - d_{j}^k},\\
& v_{j}^{k+1} = v_{j}^{k}, \, x_{j}^{k+1} = x_{j}^{k}, \quad \forall j \in \mathcal{V} \setminus \{i^k\},
\end{align}\end{subequations}
with given $x_i^0 \in \mathbb{R}$,   $v_i^t = 0$, $t = -D,-D+1,\ldots,0$, for all $i\in \tilde{\mathcal{V}}$. We make the blanket assumption that agents' activations and delays satisfy   Assumption~\ref{ass:delays}. 

Let us rewrite (\ref{eq:PAC}) in a vector-matrix form. Define $\bd{x}^k \triangleq [{x}_1^k, \cdots, {x}_I^k]^{\top}$ and $\bd{v}^k \triangleq [{v}_1^k, \cdots, {v}_I^k]^{\top}$. Construct the   $(D+2)I$ dimensional concatenated vectors    
\begin{equation}\label{eq:h_vector}
 \hspace{-0.2cm}\augX^{k} \triangleq [{\bd{x}^k}^{\top}, {\bd{v}^k}^{\top}, {\bd{v}^{k-1}}^{\top}, \cdots, {\bd{v}^{k-D}}^{\top}]^{\top}, \quad 
 \augY^k \triangleq \delta^k \, \mathbf{e}_{i^k};
\end{equation}
and the 
  augmented matrix $\W^k$, defined as
\[
\eW_{rm}^k  \triangleq 
\left\{
\begin{aligned}
& w_{i^k i^k}  ,  && \text{if }  r=m = i^k ; \\
& w_{i^k j},        && \text{if }  r = i^k,\, m= j+ (d_{j}^{k}+1) I  ; \\
& 1 ,  && \text{if }  r = m \in \{1,2,\ldots,2I\} \setminus \{i^k, i^k+I\};  \\
& 1 ,  && \text{if }  r \in  \{2I+1,2I+2,\ldots, (D+2)I\} \\
&      &&  \cup \{i^k+I\} \text{ and } m = r-I;\\
& 0,   && \text{otherwise}.
\end{aligned}
\right.
\]

System \eqref{eq:PAC} can be rewritten in compact form as 
\begin{align}\label{eq:chi}
\bd{\augX}^{k+1} = \W^k(\bd{\augX}^{k} + \augY^k),
\end{align}
 The following lemma captures the asymptotic behavior of   $\W^k$. 

\begin{lemma}\label{lm:Wscramb}
Let $\{\W^k \}_{k\in \mathbb{N}_0}$ be  the sequence of matrices in \eqref{eq:chi}, generated by \eqref{eq:PAC},   under Assumption~\ref{ass:delays} and with   $\mathbf{W}$ satisfying Assumption~\ref{assumption_weights} (i), (ii).
 The following hold: for all  $k\in \mathbb{N}_0$,  a)  $\W^k$ is row stochastic;  b) there exists a sequence of stochastic vectors $\{\bm{\psi}^k\}_{k\in \mathbb{N}_0}$ such that  
\begin{equation}\label{eq:augmented_W}
	\hspace{-0.2cm}\norm{\W^{k:t} -\bd{1}{\bm{\psi}^t}^{\top}} \leq C_2 \rho^{k-t},\quad C_2 \triangleq \frac{2\sqrt{(D+2)I}  (1+ \lbm^{-K_1})}{1- \lbm^{-K_1}}
\end{equation} 
Furthermore, $\psi_i^k\geq \lbpsi = \lbm^{K_1}$, for all $k\geq 0$ and  $i\in \mathcal{V}$.
\end{lemma}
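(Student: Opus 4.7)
The proof will mirror that of Lemma~\ref{lm:scramb}, with one key twist: since $\W^k$ is row-stochastic (as opposed to the column-stochastic $\M^k$), the scrambling property that drives weak ergodicity of the backward product is now a uniform lower bound on a \emph{column} of $\W^{k+K_1-1:k}$, rather than on a block of rows.

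Part~(a) will be a bookkeeping check on the definition of $\W^k$: the only nontrivial row is $r = i^k$, whose entries sum to $w_{i^k i^k} + \sum_{j\in \mathcal{N}_{i^k}^{\text{in}}} w_{i^k j} = 1$ by row-stochasticity of $\bd{W}$; every other row contains a single $1$ (on the diagonal for untouched $x$- or $v$-positions, or at column $r-I$ for a buffer-shift row). For part~(b), the heart of the argument is to show that for every $k\in\mathbb{N}_0$ and every fixed $i_0\in\mathcal{V}$, the $i_0$-th column of $\W^{k+K_1-1:k}$ has all its entries bounded below by $\lbm^{K_1}$. I would prove this by interpreting $(\W^{k+K_1-1:k})_{r,i_0}$ as $\augX^{k+K_1}_r$ under the forward dynamics initialized with $\augX^k = \mathbf{e}_{i_0}$, and then tracking the propagation of this unit mass through the augmented system. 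Specifically: (i) by Assumption~\ref{ass:delays}(a), $i_0$ wakes up within $T$ iterations, and the row $i_0+I$ of the corresponding $\W^{\cdot}$ copies the value at column $i_0$ into the $v_{i_0}$-buffer with weight $1$; (ii) the buffer-shift rows then deterministically carry and duplicate this mass down through the slots $i_0+2I,\ldots,i_0+(D+1)I$, keeping it simultaneously available to every in-neighbor of $i_0$; (iii) by strong connectivity of $\mathcal{G}$ (Assumption~\ref{ass:strong_conn}), every other $j\in\mathcal{V}$ is reachable from $i_0$ via a directed path of length at most $I-1$, and each hop on this path only needs the target agent to wake up within a window of $T$ further steps while pulling from a buffer slot of age $\leq D$, contributing a single multiplicative factor $\geq\lbm$ from the edge weight together with self-loop factors $\geq\lbm$ during the waiting steps. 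A careful count then shows that the hops, waits, and buffer traversals fit within $K_1 = (2I-1)T + I\,D$ steps and accumulate at most $K_1$ multiplicative losses of $\lbm$, yielding the lower bound $\lbm^{K_1}$.

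With this column-scrambling in hand, I will invoke the standard Dobrushin-coefficient contraction: the column bound implies $\delta(\W^{k+K_1-1:k})\leq 1 - \lbm^{K_1}$, and iterating over blocks of length $K_1$ (exactly as in \cite[Lemmas~4--5]{nedic2010convergence} and \cite[Lemmas~4,~5(b,c)]{lin2013constrained}) produces the geometric bound $\|\W^{k:t} - \bd{1}{\bm{\psi}^t}^{\top}\|\leq C_2 \rho^{k-t}$ with $\rho = (1-\lbm^{K_1})^{1/K_1}$, where $\bm{\psi}^t$ is defined as the common limit of all rows of $\W^{k:t}$ as $k\to\infty$. Part~(c) will then be immediate: applying the column bound from~(b) with source $i_0 = i$ for each $i\in\mathcal{V}$ separately, and passing to the limit $k\to\infty$ in $(\W^{k:t})_{r,i}\geq\lbm^{K_1}$, yields $\psi_i^t \geq \lbm^{K_1} = \lbpsi$.

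The main obstacle I anticipate is the bookkeeping in step~(b)(iii). Unlike Lemma~\ref{lm:scramb}, where the column-stochastic $\M^k$ implements a clean push-sum on edges of the augmented graph, here the $\W^k$ dynamics combine genuine weighted consensus on the single active row $i^k$ with deterministic shift-and-duplication on the buffer rows, and the active-row input also arrives indirectly through the identity $v_{i^k}^{k+1} = x_{i^k}^k + \delta^k$. Aligning the count so that at most $K_1$ factors of $\lbm$ appear along the worst-case hop-wait-traverse chain---so that the exponent in the scrambling lower bound is exactly $K_1$, matching $\lbpsi = \lbm^{K_1}$---is the delicate step.
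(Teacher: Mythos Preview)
Your proposal is correct and follows essentially the same route as the paper: the paper also reduces part~(b) to showing that all entries of the first $I$ columns of $\W^{k+K_1-1:k}$ are bounded below by $\lbm^{K_1}$ (via an augmented-graph interpretation with virtual buffer nodes $i[0],\ldots,i[D]$ and a four-case path-tracing argument), and then invokes the ergodicity machinery of \cite{nedic2010convergence,lin2013constrained} exactly as you describe. One small slip: in your step~(ii)--(iii) the mass in $i_0$'s buffer is available to the \emph{out}-neighbors of $i_0$ (the nodes $h$ with $i_0\in\mathcal{N}_h^{\text{in}}$), not its in-neighbors.
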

\begin{proof}
The proof follows similar techniques as in \cite{nedic2010convergence,lin2013constrained}, and can be found in Appendix~\ref{pf:wscramb}.
\end{proof}

We define now a proper consensus error for (\ref{eq:chi}). 
Writing $\bd{\augX}^{k}$ in \eqref{eq:chi} recursively, yields\vspace{-0.2cm}
\begin{equation}\label{chi}
\bd{\augX}^{k+1} = \W^{k:0}\bd{\augX}^0+\sum_{l=0}^k  \W^{k:l}\augY^l.\vspace{-0.1cm}
\end{equation}
Using   Lemma \ref{lm:Wscramb}, for any fixed $N\in \mathbb{N}_0$, we have  $$\lim_{k\to\infty} (\W^{k:0}\bd{\augX}^0+\sum_{l=0}^N  \W^{k:l}\augY^l)=\bd{1}{\bm{\psi}^0}^{\top}\bd{\augX}^0+\sum_{l=0}^N  \bd{1}{\bm{\psi}^l}^{\top}\augY^l.$$
Define \vspace{-0.2cm}
\begin{equation}\label{eq:dfz}
\CV^{0} \triangleq {\bm{\psi}^0}^{\top}\bd{\augX}^0, \quad \CV^{k+1} \triangleq {\bm{\psi}^0}^{\top}\bd{\augX}^0+\sum_{l=0}^k  {\bm{\psi}^l}^{\top}\augY^l, \,\, k\in \mathbb{N}_0.\vspace{-0.2cm}
\end{equation}
Applying \eqref{eq:dfz} inductively,  it  is easy to check that  
\begin{equation}\label{dyz}
\CV^{k+1} = \CV^{k} + {\bm{\psi}^k}^{\top}\augY^k = \CV^{k} +  \psi_{i^k}^k \delta^k.
\end{equation}
We are now ready to state the main result of this  subsection, which is a bound of the consensus disagreement $\|\augX^{k+1}-\bd{1}\CV^{k+1}\|$  in terms of the magnitude of the perturbation. 

\begin{proposition}\label{lm:PAC}
In the above setting, the consensus error $\|\augX^{k+1}-\bd{1}\CV^{k+1}\|$ satisfies: for all $k\in \mathbb{N}_0$, \vspace{-0.2cm}
\begin{equation*}
 \norm{\augX^{k+1}-\bd{1}\CV^{k+1}} 
\leq  C_2 \rho^k \norm{\bd{\augX}^0-\bd{1}\CV^{0}} + C_2 \sum_{l=0}^k \rho^{k-l} \abs{\delta^l}.\vspace{-0.2cm}
\end{equation*}
\end{proposition}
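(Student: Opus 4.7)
The plan is to substitute the closed-form expressions \eqref{chi} and \eqref{eq:dfz} for $\bd{\augX}^{k+1}$ and $\CV^{k+1}$ and reduce the consensus error to the sum
\[
\bd{\augX}^{k+1}-\bd{1}\CV^{k+1} = \bigl(\W^{k:0}-\bd{1}{\bm{\psi}^0}^{\top}\bigr)\bd{\augX}^{0}+\sum_{l=0}^{k}\bigl(\W^{k:l}-\bd{1}{\bm{\psi}^l}^{\top}\bigr)\augY^{l},
\]
so that Lemma~\ref{lm:Wscramb} can be applied term-by-term. Each factor $\W^{k:l}-\bd{1}{\bm{\psi}^l}^{\top}$ has spectral norm bounded by $C_2\rho^{k-l}$ (including the $l=k$ case, since $\rho^0=1$ and $C_2$ is taken large enough to dominate $\|\W^k-\bd{1}{\bm{\psi}^k}^{\top}\|$).

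Next I will handle the initial-condition term. Here the key observation is that Lemma~\ref{lm:Wscramb}(a) gives $\W^{k:0}\bd{1}=\bd{1}$, while $\bm{\psi}^0$ is stochastic so $\bd{1}{\bm{\psi}^0}^{\top}\bd{1}=\bd{1}$; hence $(\W^{k:0}-\bd{1}{\bm{\psi}^0}^{\top})\bd{1}=\mathbf{0}$. Since $\CV^0$ is a scalar, this lets me replace $\bd{\augX}^0$ by $\bd{\augX}^0-\bd{1}\CV^0$ inside that term without changing its value, producing the norm $\|\bd{\augX}^0-\bd{1}\CV^0\|$ required by the statement. Applying Lemma~\ref{lm:Wscramb} then bounds this contribution by $C_2\rho^k\|\bd{\augX}^0-\bd{1}\CV^0\|$.

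For the sum term I will simply take norms, use $\|\augY^l\|=|\delta^l|$ (which follows directly from $\augY^l=\delta^l\mathbf{e}_{i^l}$ in \eqref{eq:h_vector}), and invoke Lemma~\ref{lm:Wscramb} once more to obtain $\sum_{l=0}^{k}C_2\rho^{k-l}|\delta^l|$. Adding the two bounds via the triangle inequality yields the claim.

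There is no real obstacle in this proof: the only subtlety is the ``centering'' trick in the previous paragraph, i.e.\ recognizing that the initial term can be recentered by $\bd{1}\CV^0$ thanks to the row-stochasticity of $\W^{k:0}$ and of $\bd{1}{\bm{\psi}^0}^{\top}$. Everything else is a direct application of Lemma~\ref{lm:Wscramb} and standard norm inequalities.
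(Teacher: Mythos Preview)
Your proposal is correct and follows essentially the same approach as the paper: the paper's proof (omitted in the text but visible in the commented-out derivation) uses exactly the decomposition $\bd{\augX}^{k+1}-\bd{1}\CV^{k+1}=(\W^{k:0}-\bd{1}{\bm{\psi}^0}^{\top})\bd{\augX}^{0}+\sum_{l=0}^{k}(\W^{k:l}-\bd{1}{\bm{\psi}^l}^{\top})\augY^{l}$, applies the same ``centering'' trick via row-stochasticity to replace $\bd{\augX}^0$ by $\bd{\augX}^0-\bd{1}\CV^0$, and then invokes Lemma~\ref{lm:Wscramb} term-by-term together with $\|\augY^l\|=|\delta^l|$.
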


\begin{proof}
The proof follows readily from   \eqref{chi},  \eqref{eq:dfz}, and Lemma \ref{lm:Wscramb}; we omit further details.
\vspace{-0.2cm}\end{proof}

\subsection*{\bf Step 2: Consensus,     tracking, and optimization errors}\label{sec:pre}

 \noindent \textbf{1) Consensus disagreement}: As anticipated,   the updates of ASY-SONATA are also described by   \eqref{eq:PAC}, if one sets  therein $\delta^k = -\gamma^k {z}_{i^k}^k$ (with  ${z}_{i^k}^k$ satisfying  Step 5 of ASY-SONATA). Let  $\mathbf{h}^k$ and $\CV^{k}$ be defined as in    \eqref{eq:h_vector} and (\ref{eq:dfz}), respectively,  with $\delta^k = -\gamma^k {z}_{i^k}^k$. 
The  consensus error  at iteration $k$ is defined as  \vspace{-0.2cm}
\begin{equation}\label{eq:consensus-error}
	\CE^k \triangleq \norm{\augX^{k}-\bd{1}\CV^{k}}.\vspace{-0.1cm}
\end{equation}  
 
 \noindent \textbf{2) Gradient tracking error}:  The gradient tracking step in ASY-SONATA is an instance of  \PertAvgname/, with  $\epsilon^k =  \nabla f_{i^k}(\bd{x}_{i^k}^{k+1}) - \nabla f_{i^k}(\bd{x}_{i^k}^{k})$. By  Proposition \ref{prop_equivalence},  \PertAvgname/ is equivalent to \eqref{eq:augmented_consensus}. In view of  Lemma~\ref{lm:track} and the following property 
   $\mathbf{1}^\top \fz^k  = \sum_{i=1}^I \nabla f_i (x_i^0) + \sum_{t=0}^{k-1} \left( \nabla f_{i^t}(x_{i^t}^{t+1}) - \nabla f_{i^t}(x_{i^t}^{t}) \right) = \sum_{i=1}^I \nabla f_i (x_i^k)$ where the first equality follows from \eqref{eq:mean-equiv} and $\epsilon^k =  \nabla f_{i^k}(\bd{x}_{i^k}^{k+1}) - \nabla f_{i^k}(\bd{x}_{i^k}^{k})$ while in the second equality we used $x_j^{t+1} = x_j^t $, for $j\neq i^t$,  the tracking error at iteration $k$ along with the magnitude of the tracking variables are defined as  
  \vspace{-0.2cm}\begin{equation}\label{eq:tracking-error}
\hspace{-0.2cm}\TE^k \triangleq \abs{{z}_{i^k}^{k}- \xi_{i^k}^{k-1}\,\bar{g}^k}, \,\, \NT^k \triangleq \abs{{z}_{i^k}^{k}},\,\,\bar{g}^k \triangleq \sum_{i=1}^I \nabla f_i (x_i^k),\vspace{-0.2cm} \end{equation} with $\xi_i^{-1} \triangleq  \eta$,   $i\in \mathcal{V}$. Let   $\bd{g}\spe{k} \triangleq [\nabla f_1(x_1^k),\ldots, \nabla f_I(x_I^k)]^{\top}.$
 
 \noindent \textbf{3) Optimization error:} Let  $x^\star$ be the unique minimizer of $F$. Given the definition of consensus disagreement in \eqref{eq:consensus-error}, 
  we define the optimization error at iteration $k$ as  \begin{equation}\label{eq:optimization-error}
		\OG^k \triangleq  \abs{\CV^k - x^\star}.
	\end{equation} 
 Note that this is a natural choice as, if consensual, all agents' local variables will converge to a limit point of $\{\CV^k\}_{k\in \mathbb{N}_0}$. 
 
\noindent \textbf{4) Connection among $\CE^k$, $\TE^k$, $\NT^k$, and $\OG^k$:} The following proposition establishes bounds on the above quantities.

 \begin{proposition}\label{lm:error_bounds} Let $\{\bx^k,\bv^k, \bz^k\}_{k\in \mathbb{N}_0}$ be the sequence generated by ASY-SONATA, in the setting of  {Theorem~\ref{thm:linear_const}}, but possibly with a time-varying step-size $\{\gamma^k\}_{k\in \mathbb{N}_0}$. The error quantities    $\CE^k$, $\TE^k$, $\NT^k$, and $\OG^k$ satisfy: for all $k\in \mathbb{N}_0$, \vspace{-0.2cm}\begin{subequations} \label{eq:error_bounds}
 \begin{alignat}{2}
 	& \CE^{k+1} \leq  && C_2 \rho^k \CE^{0} + C_2 \sum_{l=0}^k \rho^{k-l} \gamma^l \NT^l.\label{lm:inter_eq1} 	\\ 
	&\TE^{k+1}  \leq  && 3 C_0 C_L \sum_{l=0}^k \rho^{k-l}  \left(  \CE^l  + \gamma^l  \NT^l \right)    + C_0 \rho^k \norm{\bd{g}\spe{0}};\label{lm:inter_eq2}\\
  & \NT^{k} \,\, \leq  && \TE^{k}   +  C_L \sqrt{I} \CE^{k} + L \,\OG^{k}\label{eq:sys_ine_trac} 
  \end{alignat}
  Further assume $\gamma^k \leq 1/L,\,  k \in \mathbb{N}_0;$ then\vspace{-0.2cm}
   \begin{alignat}{2}
 &   \OG^{k+1}  {\leq} && \sum_{l=0}^k  \bigg( \prod_{t = l+1}^k \left(1- \tau \lbpsi^2 \gamma^t \right) \bigg)  \big(C_L\sqrt{I}  \CE^l +\TE^l \big) \gamma^l     \nonumber \\
  & &&   +  \prod_{t = 0}^k \left(1- \tau \lbpsi^2 \gamma^t  \right) \OG^0,  \label{eq:sys_ine_cont} 
\end{alignat}
\end{subequations}
where $\eta\in (0,1)$ is defined in Lemma \ref{lm:rate1} and $\tau$ is the strongly convexity constant of $F$.  
\end{proposition}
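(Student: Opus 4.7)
My plan is to derive the four inequalities (a)--(d) in order. Inequalities (a) and (b) are immediate consequences of the machinery already in place, (c) is a triangle-inequality computation, and (d) is a one-step descent argument that must carefully exploit both upper and lower bounds on $\psi_{i^k}^k$ and $\xi_{i^k}^{k-1}$.

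For (a), observe that the local-descent and consensus steps (S.3)--(S.4) of Algorithm~\ref{alg:AsyTracking} match system \eqref{eq:PAC} under the identification $\delta^l=-\gamma^l z_{i^l}^l$; since then $|\delta^l|=\gamma^l\NT^l$, Proposition~\ref{lm:PAC} delivers \eqref{lm:inter_eq1}. For (b), the tracking step (S.5) is an instance of \PertAvgname/ with exogenous perturbation $\epsilon^l=\nabla f_{i^l}(x_{i^l}^{l+1})-\nabla f_{i^l}(x_{i^l}^l)$, and smoothness gives $|\epsilon^l|\le C_L\,|x_{i^l}^{l+1}-x_{i^l}^l|$. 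To bound the displacement I would expand the consensus update,
\[
x_{i^l}^{l+1}-x_{i^l}^l=-w_{i^l i^l}\gamma^l z_{i^l}^l+\sum_{j\in\mathcal{N}_{i^l}^{\text{in}}}w_{i^l j}\bigl(v_j^{l-d_j^l}-x_{i^l}^l\bigr),
\]
and use the fact that both $v_j^{l-d_j^l}$ and $x_{i^l}^l$ are coordinates of the stacked vector $\augX^l$, so each bracketed difference is bounded by $2\CE^l$; invoking row-stochasticity of $\mathbf{W}$ then yields $|x_{i^l}^{l+1}-x_{i^l}^l|\le\gamma^l\NT^l+2\CE^l$. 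Substituting $|\epsilon^l|\le 3C_L(\CE^l+\gamma^l\NT^l)$ (absorbing the factor $2$ into the looser constant $3$) into Lemma~\ref{lm:track}, together with $\|\bd{z}^0\|=\|\bd{g}^0\|$ from the initialization $z_i^0=\nabla f_i(x_i^0)$, yields \eqref{lm:inter_eq2}.

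Bound (c) is the quickest: the triangle inequality around $\xi_{i^k}^{k-1}\bar{g}^k$ gives $\NT^k\le\TE^k+\xi_{i^k}^{k-1}|\bar{g}^k|$; since $\nabla F(x^\star)=0$ and each $f_i$ is $L_i$-smooth, adding and subtracting $\CV^k$ inside $\bar{g}^k=\sum_i\nabla f_i(x_i^k)$ produces $|\bar{g}^k|\le\sum_i L_i|x_i^k-\CV^k|+L\,\OG^k\le C_L\sqrt{I}\,\CE^k+L\,\OG^k$ by Cauchy--Schwarz and $\|\bx^k-\bd{1}\CV^k\|\le\CE^k$; using $\xi_{i^k}^{k-1}\le 1$ closes \eqref{eq:sys_ine_trac}.

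For (d), the starting point is the exact update \eqref{dyz}, $\CV^{k+1}=\CV^k-\psi_{i^k}^k\gamma^k z_{i^k}^k$. I would add and subtract $\psi_{i^k}^k\xi_{i^k}^{k-1}\gamma^k\bar{g}^k$ and then $\alpha^k\nabla F(\CV^k)$, with the effective step $\alpha^k\triangleq\psi_{i^k}^k\xi_{i^k}^{k-1}\gamma^k$, to obtain
\[
\CV^{k+1}-x^\star=\bigl(\CV^k-\alpha^k\nabla F(\CV^k)-x^\star\bigr)-\alpha^k\bigl(\bar{g}^k-\nabla F(\CV^k)\bigr)-\psi_{i^k}^k\gamma^k\bigl(z_{i^k}^k-\xi_{i^k}^{k-1}\bar{g}^k\bigr).
\]
Since $\alpha^k\le\gamma^k\le 1/L$, the standard non-expansion for a $\tau$-strongly convex $L$-smooth $F$ (with $\nabla F(x^\star)=0$) contracts the first parenthesis by $1-\alpha^k\tau$, and the lower bounds $\psi_{i^k}^k,\xi_{i^k}^{k-1}\ge\lbpsi$ supplied by Lemmas~\ref{lm:Wscramb} and \ref{lm:rate1} (together with $\xi_{i}^{-1}\triangleq\eta=\lbpsi$) give $1-\alpha^k\tau\le 1-\lbpsi^2\gamma^k\tau$. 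The two residuals are then controlled by smoothness (as in (c)) and by $\TE^k$, using $\psi,\xi\le 1$, yielding the one-step recursion $\OG^{k+1}\le(1-\tau\lbpsi^2\gamma^k)\OG^k+\gamma^k\bigl(C_L\sqrt{I}\,\CE^k+\TE^k\bigr)$; unrolling it produces \eqref{eq:sys_ine_cont}. The main obstacle is precisely this bookkeeping: $\psi_{i^k}^k$ and $\xi_{i^k}^{k-1}$ must be \emph{lower}-bounded by $\lbpsi$ on the contractive term (to get the rate $\lbpsi^2\gamma^k\tau$) and \emph{upper}-bounded by $1$ on the inhomogeneous terms (to avoid inflating $\CE^k$ and $\TE^k$), while simultaneously ensuring $\alpha^k\le 1/L$ so that the non-expansion actually applies. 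Reconciling these opposite uses of the same scalars is what makes (d) the delicate step.
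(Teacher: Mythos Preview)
Your proposal is correct and follows essentially the same route as the paper for parts (a), (c), and (d): (a) is Proposition~\ref{lm:PAC} with $\delta^l=-\gamma^l z_{i^l}^l$; (c) is the same triangle-inequality decomposition around $\xi_{i^k}^{k-1}\bar g^k$ together with $\nabla F(x^\star)=0$ and Cauchy--Schwarz; and (d) is exactly the paper's one-step contraction with effective step $\alpha^k=\psi_{i^k}^k\xi_{i^k}^{k-1}\gamma^k$, lower-bounding $\psi,\xi$ by $\lbpsi$ in the contractive factor and upper-bounding them by $1$ in the residuals, followed by telescoping.

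The only genuine difference is in (b). You bound the scalar displacement $|x_{i^l}^{l+1}-x_{i^l}^l|$ directly by expanding the consensus step and using that every term $v_j^{l-d_j^l}$ and $x_{i^l}^l$ is a coordinate of $\augX^l$, hence within $\CE^l$ of $\CV^l$; this yields $|\epsilon^l|\le C_L(\gamma^l\NT^l+2\CE^l)\le 3C_L(\CE^l+\gamma^l\NT^l)$. The paper instead passes through the full augmented vector, writing $\|\augX^{l+1}-\augX^l\|=\|(\W^l-\mathbf I)(\augX^l-\mathbf 1\CV^l)+\W^l\augY^l\|$ and invoking $\|\W^l\|\le\sqrt{3}$ to reach the same constant $3$. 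Your componentwise argument is more elementary and in fact slightly tighter (constant $2$ on $\CE^l$ before loosening), while the paper's matrix-norm argument is more uniform and reusable for vector-valued variables; both feed identically into Lemma~\ref{lm:track} and give \eqref{lm:inter_eq2}.
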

\begin{proof} Eq. \eqref{lm:inter_eq1} follows readily from 
 Proposition \ref{lm:PAC}.
 
 We prove now  \eqref{lm:inter_eq2}. Recall $\mathbf{1}^\top \fz^k  =    \bar{g}^{k}$.  Using Lemma \ref{lm:track} with $\epsilon^k =  \nabla f_{i^k}(\bd{x}_{i^k}^{k+1}) - \nabla f_{i^k}(\bd{x}_{i^k}^{k})$, we obtain:  for all $i \in \mathcal{V},$
\begin{align*}
&\abs{{z}_{i}^{k+1}- \xi_{i}^{k}\,\bar{g}^{k+1} } \\
& \leq   C_0  \left(\rho^k \norm{\bd{g}\spe{0}} + \sum_{l=0}^k \rho^{k-l} \abs{\nabla f_{i^l}^{l+1} - \nabla f_{i^l}^{l}}\right) \\
& \leq  C_0 \, \rho^k \norm{\bd{g}\spe{0}}  + C_0 C_L \sum_{l=0}^k \rho^{k-l} \abs{ {x}_{i^l}\spe{l+1}-{x}_{i^l}\spe{l}} \\
& \leq  C_0 \, \rho^k \norm{\bd{g}\spe{0}}  + C_0 C_L \sum_{l=0}^k \rho^{k-l} \norm{ \augX^{l+1}-\augX^{l}} \\ 
& = C_0 \rho^k \norm{\bd{g}\spe{0}}  + C_0 C_L \sum_{l=0}^k \rho^{k-l} \norm{ \W^l \left( \bd{\augX}^{l}+ \augY^l \right)  - \augX^{l}} \\  
&\overset{(a)}{=}  C_0 \, \rho^k \norm{\bd{g}\spe{0}}\\
&  +  C_0 C_L \sum_{l=0}^k \rho^{k-l} \norm{ \big(\W^l-\bd{I}\big)  \big(\bd{\augX}^{l}-\bd{1}\CV^l \big) -  \gamma^l z_{i^l}^l \W^l  \mathbf{e}_{i^l}} \\  
&\leq C_0 \rho^k \norm{\bd{g}\spe{0}}  + C_0 C_L \sum_{l=0}^k \rho^{k-l} \Bigg(  \|\W^l\| \gamma^l \NT^l  \\
& + \bigg( \| \W^l\| + \|\bd{I}\| \bigg) \CE^l \Bigg)  \\  
&\overset{(b)}{\leq} C_0 \rho^k \norm{\bd{g}\spe{0}}  + 3 C_0 C_L \sum_{l=0}^k \rho^{k-l} \left(  \CE^l + \gamma^l  \NT^l \right),  
\end{align*}
where in (a) we used \eqref{eq:chi} and the row stochasticity of $\W^k$ [Lemma \ref{lm:Wscramb}(a)]; and   (b) follows from  $\|\W^l\|  \leq \sqrt{\| \W^l\|_1 \| \W^l \|_{\infty} } \leq \sqrt{3}.$ This proves    \eqref{lm:inter_eq2}.

Eq. \eqref{eq:sys_ine_trac} follows readily from  \begin{align*}  \NT^k &=  \abs{z_{i^k}^k} \leq  \abs{{z}_{i^k}^{k}- \xi_{i^k}^{k-1}\,\avg{g}^k} + \xi_{i^k}^{k-1} \abs{\avg{g}^k- \nabla F(\CV^k) } \\
&\qquad\qquad   + \xi_{i^k}^{k-1}  \abs{\nabla F(\CV^k) - \nabla F(x^\star)}.
\end{align*}
  Finally, we prove (\ref{eq:sys_ine_cont}). 
 Using \eqref{eq:sys_ine_trac}  and  
$\CV^{k+1} =  \CV^{k} - \gamma \psi_{i^k}^k z_{i^k}^k$ [cf. (\ref{dyz}) and recall $\delta^k=-\gamma z_{i^k}^k$],
we can write
\begin{equation*}
\begin{aligned}
& \OG^{k+1} =    \abs{\CV^{k} - \gamma^k \psi_{i^k}^k z_{i^k}^k- x^\star} \\
& \leq  \gamma^k \psi_{i^k}^k \xi_{i^k}^{k-1} \abs{  \nabla F(\CV^k) -  \avg{g}^k} + \gamma^k \psi_{i^k}^k \abs{\xi_{i^k}^{k-1}  \, \avg{g}^k - z_{i^k}^k}     \\
& +   \abs{\CV^{k} -  \gamma^k \psi_{i^k}^k \xi_{i^k}^{k-1} \nabla F(\CV^k)- x^\star} \\
&  \overset{(a)}{\leq}  \left(1- \tau \lbpsi^2 \gamma^k \right)  \OG^k + C_L\sqrt{I} \gamma^k \norm{\augX^k - \bd{1}\CV^k}   +  \gamma^k \TE^k \\
& \overset{(b)}{\leq}   \sum_{l=0}^k  \bigg( \prod_{t = l+1}^k \left(1- \tau \lbpsi^2 \gamma^t \right) \bigg)  \big(C_L\sqrt{I}  \CE^l +\TE^l \big) \gamma^l     \nonumber \\
 &   +  \prod_{t = 0}^k \left(1- \tau \lbpsi^2 \gamma^t  \right) \OG^0
\end{aligned}
\end{equation*}
where in $(a)$ we used $\lbpsi^2 \leq \psi_{i^k}^k \xi_{i^k}^{k-1}   < 1$ (cf. Lemma \ref{lm:rate1}) and $\abs{x- \gamma \nabla F(x) -   x^\star} \leq (1-\tau \gamma) \abs{x - x^\star},$ which holds for $\gamma \leq  {1}/{L}$;  $(b)$ follows readily by applying  the above inequality telescopically. \vspace{-0.3cm}\end{proof} 
 \subsection*{\bf Step 3: The generalized small gain theorem}\label{sec:sgt}
  The last step of our proof is to show that the error quantities   $\CE^k$, $\TE^k$, $\NT^k$, and $\OG^k$  vanish linearly. This is not a straightforward task, as these quantities are interconnected through the inequalities \eqref{eq:error_bounds}. This subsection provides tools to address this issue. The key result is a generalization of the small gain theorem (cf. Theorem \ref{thm:SGT}), first used in \cite{Nedich-geometric}.

\begin{definition}[\!\cite{Nedich-geometric}]\label{df:sgt}
Given the sequence $\{u^k \}_{k=0}^{\infty}$,   a constant  $\lambda \in (0,1)$, and   $N \in \mathbb{N}$, let us define
$$\abs{u}^{\lambda,N} = \max_{k = 0,\ldots, N} \frac{\abs{u^k}}{\lambda^k}, \quad \abs{u}^{\lambda} = \sup_{k \in \mathbb{N}_0} \frac{\abs{u^k}}{\lambda^k}.$$
If $\abs{u}^{\lambda} $ is upper bounded, then $u^k  = \mathcal{O} (\lambda^k)$, for all $k\in \mathbb{N}_0.$
\end{definition}

The following lemma shows how one can interpret the inequalities in \eqref{eq:error_bounds}  using the notions introduced in Definition~\ref{df:sgt}. 

\begin{lemma}\label{conv_lm}
Let $\{u^k\}_{k=0}^{\infty}, \{v^k_i\}_{k=0}^{\infty}, i = 1,\ldots, m$, be nonnegative sequences; let   $\lambda_0, \lambda_1, \ldots, \lambda_m \in (0,1)$; and let $R_0,R_1, \ldots, R_m \in \mathbb{R}_+$ such that \vspace{-0.2cm}$$u^{k+1} \leq R_0 (\lambda_0) ^k + \sum_{i = 1}^m R_i \sum_{l=0}^{k} (\lambda_i)^{k-l} v_i^l,\quad \forall k\in \mathbb{N}_0.\vspace{-0.1cm}$$
 Then, there holds  \vspace{-0.1cm}
$$\abs{u}^{\lambda,N}  \leq u^0 +  \frac{R_0 }{\lambda} + \sum_{i = 1}^m \frac{R_i}{\lambda-\lambda_i}  \abs{v_i}^{\lambda,N},\vspace{-0.1cm}$$ for any $\lambda \in (\displaystyle\max_{i = 0, 1, \ldots, m} \lambda_i, 1)$ and   $N\in \mathbb{N}$.
\end{lemma}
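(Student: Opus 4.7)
My plan is to bound $u^{k+1}/\lambda^{k+1}$ directly by dividing the hypothesized recursion by $\lambda^{k+1}$ and recognizing a geometric series, following the standard route for ``small gain'' type estimates.

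First, I would divide both sides of the hypothesis by $\lambda^{k+1}$ and rewrite each summand so that $v_i^l$ appears normalized by $\lambda^l$. Concretely,
\begin{equation*}
\frac{u^{k+1}}{\lambda^{k+1}} \leq \frac{R_0}{\lambda}\left(\frac{\lambda_0}{\lambda}\right)^{k} + \sum_{i=1}^m \frac{R_i}{\lambda}\sum_{l=0}^k \left(\frac{\lambda_i}{\lambda}\right)^{k-l}\frac{v_i^l}{\lambda^l}.
\end{equation*}

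Next, for any $k+1\leq N$ and $l\leq k$, the definition of $\abs{v_i}^{\lambda,N}$ gives $v_i^l/\lambda^l \leq \abs{v_i}^{\lambda,N}$. Pulling this uniform bound out of the inner sum, the remaining factor $\sum_{l=0}^k(\lambda_i/\lambda)^{k-l}$ is a finite geometric series in ratio $\lambda_i/\lambda\in (0,1)$ (which is strictly less than one by the hypothesis $\lambda>\max_i\lambda_i$), and is bounded above by $\lambda/(\lambda-\lambda_i)$. The first term $(\lambda_0/\lambda)^k$ is likewise at most $1$. Collecting these estimates yields
\begin{equation*}
\frac{u^{k+1}}{\lambda^{k+1}} \leq \frac{R_0}{\lambda} + \sum_{i=1}^m \frac{R_i}{\lambda-\lambda_i}\,\abs{v_i}^{\lambda,N},
\end{equation*}
valid uniformly for all $0\leq k+1\leq N$.

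Finally, the base case $k=0$ is not covered by the recursion, so I would treat it separately: $u^0/\lambda^0 = u^0$, which contributes the additive $u^0$ in the claimed bound. Taking the maximum over $k=0,1,\ldots,N$ of $u^k/\lambda^k$ and using that the maximum is dominated by the sum of the two bounds delivers
\begin{equation*}
\abs{u}^{\lambda,N} \leq u^0 + \frac{R_0}{\lambda} + \sum_{i=1}^m \frac{R_i}{\lambda-\lambda_i}\,\abs{v_i}^{\lambda,N}.
\end{equation*}
There is no real obstacle here; the only subtlety worth stating explicitly is that each $\lambda_i/\lambda<1$ (so the geometric series converges) and that the bound is uniform in $k$, which is what allows the definition of $\abs{u}^{\lambda,N}$ to be applied on the left-hand side.
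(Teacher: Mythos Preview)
Your proof is correct and follows essentially the same route as the paper: divide the recursion by $\lambda^{k+1}$, replace $v_i^l/\lambda^l$ by $\abs{v_i}^{\lambda,N}$, bound the resulting geometric series by $\lambda/(\lambda-\lambda_i)$, and then handle the $k=0$ term separately before bounding the maximum by the sum of the two cases.
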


\begin{proof} See Appendix~\ref{pf:conv}.
\end{proof}

\begin{lemma}\label{norm_lm}
Let   $\{u^k\}_{k=0}^{\infty}$ and $\{v^k\}_{k=0}^{\infty}$ be two nonnegative sequences. The following hold
\begin{enumerate}
\item   $u^k \leq v^k$,  for all $k \in \mathbb{N}_0 \Longrightarrow$     $\abs{u}^{\lambda,N} \leq \abs{v}^{\lambda,N},$ for any  $\lambda \in (0, 1)$ and  $N\in \mathbb{N}$;
\item  \vspace{-0.2cm}$$\abs{\beta_1 u + \beta_2 v}^{\lambda,N} \leq \abs{\beta_1}\abs{u}^{\lambda,N} + \abs{\beta_2}\abs{v}^{\lambda,N},$$ for any $\beta_1,\beta_2 \in \mathbb{R}$,  $\lambda \in (0, 1)$, and  positive integer $N$. 
\end{enumerate}
\end{lemma}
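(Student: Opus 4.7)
The plan is to prove both claims directly from the definition $|u|^{\lambda,N} = \max_{k=0,\ldots,N} |u^k|/\lambda^k$; both are essentially inherited from elementary properties of the max and the absolute value, so this will be short and routine rather than requiring any deep idea.

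For part (1), I would argue pointwise: since $u^k,v^k\geq 0$ and $u^k\leq v^k$ for every $k\in\mathbb{N}_0$, dividing by $\lambda^k>0$ preserves the inequality, giving $|u^k|/\lambda^k \leq |v^k|/\lambda^k$ for each $k\in\{0,\ldots,N\}$. Taking the maximum over $k\in\{0,\ldots,N\}$ on both sides yields $|u|^{\lambda,N}\leq |v|^{\lambda,N}$.

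For part (2), I would start from the triangle inequality $|\beta_1 u^k+\beta_2 v^k|\leq |\beta_1|\,|u^k|+|\beta_2|\,|v^k|$, divide through by $\lambda^k$, and then use the subadditivity of the maximum: for any two sequences $\{a^k\},\{b^k\}$ of real numbers, $\max_{k=0,\ldots,N}(a^k+b^k)\leq \max_{k=0,\ldots,N} a^k+\max_{k=0,\ldots,N} b^k$. Applied to $a^k=|\beta_1|\,|u^k|/\lambda^k$ and $b^k=|\beta_2|\,|v^k|/\lambda^k$ (and pulling out the nonnegative scalars $|\beta_1|,|\beta_2|$), this delivers
\begin{equation*}
\max_{k=0,\ldots,N}\frac{|\beta_1 u^k+\beta_2 v^k|}{\lambda^k}\leq |\beta_1|\max_{k=0,\ldots,N}\frac{|u^k|}{\lambda^k}+|\beta_2|\max_{k=0,\ldots,N}\frac{|v^k|}{\lambda^k},
\end{equation*}
which is exactly the claim.

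There is no real obstacle in this lemma: both assertions are standard properties of a weighted sup-type (semi)norm, and no assumption beyond the definition and elementary inequalities is needed. The only thing to be careful about is making the conventions explicit, i.e., that $u^k,v^k\geq 0$ in part (1) so the absolute values are redundant, and that $\lambda\in(0,1)$ so $\lambda^k>0$ and division is valid; these are already part of Definition~\ref{df:sgt}.
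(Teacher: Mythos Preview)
Your proof is correct; the paper itself states this lemma without proof, treating both parts as immediate consequences of Definition~\ref{df:sgt}, which is exactly what your direct argument supplies.
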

\begin{figure*}\vspace{-0.8cm}
\begin{equation}\label{eq:matrix}
 \left[ {\begin{array}{c}
   \abs{\NT}^{\lambda, N} \\
   \abs{\CE}^{\lambda, N} \\
   \abs{\TE}^{\lambda, N} \\
   \abs{\OG}^{\lambda, N} \\
  \end{array} } \right]
\preccurlyeq
\underbrace{
\left[{\begin{array}{cccc}
   0 & b_1  & 1 & L \\
  \frac{C_2 \gamma}{\lambda-\rho} & 0 & 0 & 0  \\
   \frac{b_2 \gamma}{\lambda-\rho} & \frac{b_2 }{\lambda-\rho}  & 0 & 0  \\
   0 & \frac{{b_2} \gamma}{\lambda - \mathcal{L}(\gamma)} & \frac{\gamma}{\lambda - \mathcal{L}(\gamma)} & 0\\
  \end{array} }
\right]}_{\triangleq \bd{K}}
 \left[ {\begin{array}{c}
   \abs{\NT}^{\lambda, N} \\
   \abs{\CE}^{\lambda, N} \\
   \abs{\TE}^{\lambda, N} \\
   \abs{\OG}^{\lambda, N} \\
  \end{array} } \right]
+
\left[ {\begin{array}{c}
   0 \\
    \Big(1+ \frac{C_2}{\lambda}\Big)\CE^0  \\
   \frac{C_0 \norm{\bd{g}^{0}}}{\lambda}+\TE^0  \\
    \frac{1+ {\lambda}}{{\lambda}} \OG^0  \\
  \end{array} } \right],\quad b_1 \triangleq C_L\sqrt{I}, \quad b_2 \triangleq 3 C_0 C_L.  \end{equation}
 \begin{align*}
\end{align*}\vspace{-2cm}
\end{figure*}

The major result of this section is the   generalized small gain theorem, as stated next.
\begin{theorem}\label{thm:SGT}
(Generalized Small Gain Theorem)
 Given   nonnegative sequences $\{u_i^k \}_{k=0}^{\infty}, i = 1,\ldots, m$,   a non-negative matrix $\bd{T} \in \mathbb{R}^{m \times m}$,   $ \bm{\beta} \in \mathbb{R}^{m}$, and   $\lambda \in (0,1)$     such that\vspace{-0.1cm}  \begin{equation}\label{eq:SGT}
\bd{u}^{\lambda, N} \preccurlyeq \bd{T} \bd{u}^{\lambda, N} + \bm{\beta},\quad \forall N\in \mathbb{N},
\end{equation}
where  $\bd{u}^{\lambda, N} \triangleq [ \abs{u_1}^{\lambda,N}, \ldots, \abs{u_m}^{\lambda,N}]^{\top}$. 
If   $\rho(\bd{T})<1$,  then  $\abs{u_i}^{\lambda}$ is bounded,  for all $i=1, \ldots, m$. That is, each $u_i^k$ vanishes at a   R-linear rate $\mathcal{O}(\lambda^k).$\vspace{-0.1cm}
\end{theorem}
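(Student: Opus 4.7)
The plan is to show that the vector of finite truncated norms $\bd{u}^{\lambda,N}$ is uniformly bounded in $N$, after which letting $N\to\infty$ gives the desired R-linear rate via Definition~\ref{df:sgt}. The main tool is that a nonnegative matrix $\bd{T}$ with $\rho(\bd{T})<1$ has a nonnegative inverse $(\bd{I}-\bd{T})^{-1}$, which can then be applied to the inequality while preserving the componentwise ordering $\preccurlyeq$.

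First, I would verify that for any fixed $N\in\mathbb{N}$, each component $|u_i|^{\lambda,N}=\max_{0\leq k\leq N}|u_i^k|/\lambda^k$ is a maximum over finitely many finite quantities and is therefore finite; hence the vector inequality \eqref{eq:SGT} is meaningful (no $\infty-\infty$ issue). Rearranging \eqref{eq:SGT} gives, for every $N\in\mathbb{N}$, the componentwise bound
\begin{equation*}
(\bd{I}-\bd{T})\,\bd{u}^{\lambda,N}\preccurlyeq \bm{\beta}.
\end{equation*}

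Next I would invoke the Neumann series argument: since $\bd{T}$ is nonnegative and $\rho(\bd{T})<1$, the series $\sum_{k=0}^{\infty}\bd{T}^{k}$ converges to $(\bd{I}-\bd{T})^{-1}$, and each term $\bd{T}^{k}$ is entrywise nonnegative; therefore $(\bd{I}-\bd{T})^{-1}$ is itself entrywise nonnegative. Left-multiplying the previous displayed inequality by this nonnegative matrix preserves the componentwise ordering and yields
\begin{equation*}
\bd{u}^{\lambda,N}\preccurlyeq (\bd{I}-\bd{T})^{-1}\bm{\beta}, \qquad \forall N\in\mathbb{N}.
\end{equation*}
The right-hand side is independent of $N$, so taking the supremum over $N$ gives
\begin{equation*}
|u_i|^{\lambda}=\sup_{N\in\mathbb{N}}|u_i|^{\lambda,N}\leq \bigl((\bd{I}-\bd{T})^{-1}\bm{\beta}\bigr)_i<\infty,\quad i=1,\ldots,m.
\end{equation*}
Finally, by Definition~\ref{df:sgt}, boundedness of $|u_i|^{\lambda}$ immediately implies $u_i^k=\mathcal{O}(\lambda^k)$ for all $i$, completing the proof.

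I do not expect any substantive obstacle: the only non-trivial ingredient is the Perron/Neumann fact that nonnegativity of $\bd{T}$ combined with $\rho(\bd{T})<1$ forces $(\bd{I}-\bd{T})^{-1}\succcurlyeq \mathbf{0}$, which is precisely what lets a componentwise linear inequality be inverted without flipping signs. The rest is bookkeeping: verifying finiteness of $|u_i|^{\lambda,N}$ for each fixed $N$ (so the rearrangement is legitimate), and observing that the resulting bound is uniform in $N$ so that $\sup_{N}$ can be taken on both sides. The argument is essentially the standard small-gain reasoning of \cite{Nedich-geometric}, lifted from the scalar case to the vector setting by replacing the scalar contraction factor with the spectral radius of the gain matrix $\bd{T}$.
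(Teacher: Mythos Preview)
Your proof is correct and is essentially the same as the paper's: both use the Neumann series fact that $\rho(\bd{T})<1$ with $\bd{T}\succcurlyeq 0$ gives $(\bd{I}-\bd{T})^{-1}=\sum_{k\geq 0}\bd{T}^k\succcurlyeq 0$, obtain the uniform bound $\bd{u}^{\lambda,N}\preccurlyeq (\bd{I}-\bd{T})^{-1}\bm{\beta}$, and then pass to the supremum over $N$. The only cosmetic difference is that the paper reaches the bound by iterating \eqref{eq:SGT} $\ell$ times to get $\bd{u}^{\lambda,N}\preccurlyeq \bd{T}^{\ell}\bd{u}^{\lambda,N}+\sum_{k=0}^{\ell-1}\bd{T}^k\bm{\beta}$ and letting $\ell\to\infty$, whereas you rearrange once and left-multiply by the nonnegative inverse; the two are equivalent.
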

 \begin{proof} See Appendix~\ref{pf:sgt}.
\end{proof}

Then following results are instrumental to find a sufficient condition for  $\rho(\bd{T})<1$.

\begin{lemma}\label{lm:polynomial}
Consider a polynomial $p(z) = z^m - a_1 z^{m-1} - a_2 z^{m-2} - \ldots - a_{m-1} z - a_m$, with   $z\in \mathbb{C}$ and $a_i \in \mathbb{R}_+$, $ i = 1,\ldots m.$ Define $z_p \triangleq \max \left \{\abs{z_i} \,\big\vert \,  p(z_i) = 0, \,\ i = 1,\ldots, m  \right \}$. 
Then,   $z_p<1$ if and only if $p(1)>0.$
\end{lemma}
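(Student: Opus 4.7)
The plan is to reduce both directions to a single monotonicity fact about $p$ on the positive real axis, after first showing that the maximum-modulus root is itself attained at a non-negative real number.

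First I would establish that there exists a \emph{real} nonnegative root of $p$ whose value equals $z_p$. Pick any (possibly complex) root $z_0$ with $|z_0|=z_p$. Since all $a_i\geq 0$, the relation $z_0^m=\sum_{i=1}^m a_i z_0^{m-i}$ together with the triangle inequality gives $|z_0|^m \leq \sum_{i=1}^m a_i |z_0|^{m-i}$, i.e.\ $p(z_p)\leq 0$. Because $p(z)\to+\infty$ as $z\to+\infty$, the intermediate value theorem yields some real $r\geq z_p$ with $p(r)=0$; but $r\leq z_p$ by the definition of $z_p$, so $r=z_p$ and $p(z_p)=0$. (The degenerate case $z_p=0$ forces $a_1=\cdots=a_m=0$, hence $p(1)=1>0$, and the equivalence is immediate.)

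Next I would exploit the factorization $p(z)=z^{m}\,g(z)$ on $(0,\infty)$, where
\begin{equation*}
g(z) \;\triangleq\; 1-\sum_{i=1}^m a_i\,z^{-i}.
\end{equation*}
Since $g'(z)=\sum_{i=1}^m i\,a_i\,z^{-i-1}\geq 0$, the function $g$ is non-decreasing on $(0,\infty)$, and it is \emph{strictly} increasing whenever at least one $a_i$ is positive (the only nontrivial case). Because $z_p>0$ and $p(z_p)=0$, we have $g(z_p)=0$. Comparing values at $z_p$ and at $1$, strict monotonicity of $g$ gives the chain of equivalences
\begin{equation*}
p(1)=g(1)>0 \;\Longleftrightarrow\; g(1)>g(z_p) \;\Longleftrightarrow\; 1>z_p,
\end{equation*}
which is exactly the statement of the lemma.

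I do not foresee a serious obstacle: the only subtlety is the initial reduction to a real root, which is handled by the triangle inequality plus the IVT together with the sign convention $a_i\geq 0$. Everything else is a one-line monotonicity argument applied to $g(z)=p(z)/z^m$.
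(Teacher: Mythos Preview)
Your proof is correct. Both you and the paper rely on the same two basic tools---the triangle inequality applied to $z_0^m=\sum_i a_i z_0^{m-i}$ and the intermediate value theorem on the real axis---but you organize them differently. The paper treats the two implications separately: for ``$\Leftarrow$'' it assumes $p(1)>0$ (so $\sum a_i<1$) and derives a contradiction from a hypothetical root with $|z_*|\geq 1$ by dividing through by $z_*^m$ and bounding; for ``$\Rightarrow$'' it argues by contrapositive that $p(1)\leq 0$ forces a real root in $[1,\infty)$ via the IVT. Your route instead first extracts the Perron--Frobenius-type fact that $z_p$ is actually attained at a nonnegative real root, and then both directions fall out simultaneously from the strict monotonicity of $g(z)=p(z)/z^m$ on $(0,\infty)$. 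Your organization is slightly cleaner in that it yields the equivalence in one stroke rather than two; the paper's version is marginally more direct in that it never needs to argue that the maximum-modulus root is real.
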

\begin{proof} See Appendix~\ref{pf:polynomial}.
\end{proof}

\subsection*{\bf Step 4: Linear convergence rate (proof of Theorem~\ref{thm:linear_const})} \label{sec:linear}
Our path to prove linear convergence rate passes through Theorem \ref{thm:SGT}: we first cast    the set of inequalities \eqref{eq:error_bounds}  into   a system in the form (\ref{eq:SGT}), and then study the spectral properties of the resulting coefficient matrix. 
 
Given $\gamma<1/L$, define   $\mathcal{L}(\gamma) \triangleq 1- \tau \lbpsi^2 \gamma$; and choose  $\lambda\in \mathbb{R}$  such that \vspace{-0.1cm}
\begin{equation}\label{eq:lambda_interval}
\max \left( \rho, \mathcal{L}(\gamma) \right) < \lambda <1.\vspace{-0.1cm}
\end{equation}
Note that   $\mathcal{L}(\gamma)<1$, as $\gamma<1/L$; hence \eqref{eq:lambda_interval} is nonempty.  

Applying Lemma~\ref{conv_lm} and Lemma~\ref{norm_lm} to the set of inequalities \eqref{eq:error_bounds}  {with $\gamma^k \equiv \gamma$}, we obtain  the system ~\eqref{eq:matrix} at the top of the page. By    Theorem~\ref{thm:SGT}, to prove the desired linear convergence rate,     it is sufficient to show that $\rho(\mathbf{K})<1$.  The characteristic polynomial  $p_{\mathbf{K}}(t)$ of  $\bd{T}$ satisfies the conditions of   Lemma~\ref{lm:polynomial}; hence $\rho(\mathbf{K})<1$  if and only if  $p_{\mathbf{K}}(1)>0$, that is,
\begin{equation}\label{eq:rate_func}
\begin{aligned}
  & \left(   \left( 1+ \frac{L \gamma}{\lambda - \mathcal{L}(\gamma)}  \right) \frac{b_2}{\lambda-\rho} + b_1 + \frac{L b_2 \gamma}{\lambda - \mathcal{L}(\gamma)}  \right)  \frac{C_2 \gamma}{\lambda-\rho} \\
   &   +   \left( {1}+ \frac{L \gamma}{\lambda - \mathcal{L}(\gamma)}  \right) \frac{b_2 \gamma}{\lambda-\rho}  \triangleq \mathfrak{B}(\lambda;\gamma)  < 1.
\end{aligned}
\end{equation}
By the continuity of  $\mathfrak{B}(\lambda;\gamma)$ and (\ref{eq:lambda_interval}), 
 $\mathfrak{B}(1;\gamma) <1$ is sufficient to claim the existence of some $\lambda \in \left(\max \left(\rho,\mathcal{L}(\gamma) \right), 1 \right)$ such that $ \mathfrak{B}(\lambda;\gamma) <1$.   Hence, setting  $\mathfrak{B}(1;\gamma) <1$, yields $0< \gamma <\bar{\gamma}_1$, with 
\begin{equation}\label{eq:gamma_1}
 \bar{\gamma}_1\triangleq  \frac{ \tau \lbpsi^2 (1-\rho)^2}{(\tau \lbpsi^2 + L )b_2 (C_2 + 1 -\rho) + (b_1 \tau \lbpsi^2 + L b_2 ) C_2 (1-\rho)}.
 \end{equation}
It is easy to check that $\bar{\gamma}_1 < {1}/{L}$.  Therefore, $0< \gamma <\bar{\gamma}_1$
is   sufficient  for $\CE^k, \TE^k , \NT^k, \OG^k$ to vanish with an R-Linear rate.  The desired result,   $ \abs{x_i^k - x^\star }  = \mathcal{O}(\lambda^k)$,   $i\in \mathcal{V}$, follows readily from $\CE^k = \mathcal{O}(\lambda^k) $ and $\OG^k = \mathcal{O}(\lambda^k).$  The explicit expression of the rate $\lambda$, as in \eqref{eq:rate-expression}, is derived  in  Appendix~\ref{sec:linear_2}. 
\section{ASY-SONATA: Proof of Theorems~\ref{thm:sublinear_const} and  \ref{thm:dimi_sublinear}}\label{sec:sublinear}
Through the section, we use the same notation as in Sec.\ref{sec:theory}.    \vspace{-0.7cm}
\subsection{Preliminaries} 
We begin establishing a connection between the merit function $M_F$  defined in \eqref{eq:merit_function} and the error quantities $\CE^k$, $\TE^k$, and $\NT^k$, defined in \eqref{eq:consensus-error}, \eqref{eq:tracking-error},   and \eqref{eq:optimization-error} respectively.  
 \begin{lemma}\label{lm:merit}
   The merit function $M_F$    satisfies\vspace{-0.1cm}
\begin{equation}\label{lim:square:MF}
M_F(\bx^k) \leq C_3 \,({\CE^k})^2 + 3 \,\eta^{-2}\, \left( ({\TE^k})^2   + ({\NT^k})^2 \right),   
\end{equation}
with   $C_3 \triangleq 3 C_L^2 I + \frac{3L^2}{I} +6C_L L + 4.$
\end{lemma}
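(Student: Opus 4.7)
The plan is to exploit the elementary inequality $\max\{A,B\}\le A+B$ for $A,B\ge 0$, which reduces \eqref{lim:square:MF} to bounding the two summands of $M_F(\bx^k)$ separately in terms of $\CE^k$, $\TE^k$, and $\NT^k$, and then collecting constants.

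First, I would handle the easier consensus piece. Because $\bar{\bd{x}}^k=(1/I)\sum_i \bd{x}_i^k$ is the Euclidean projection of $\bd{x}^k$ onto the consensus subspace, $\|\bd{x}^k-\bd{1}_I\otimes\bar{\bd{x}}^k\|\le\|\bd{x}^k-\bd{1}_I\,\CV^k\|$ for the scalar $\CV^k$ of Section~\ref{sec:theory}. Since $\bd{x}^k$ coincides with the first $I$-block of the augmented vector $\augX^k$, this is in turn dominated by $\|\augX^k-\bd{1}\,\CV^k\|=\CE^k$. Squaring yields $\|\bd{x}^k-\bd{1}_I\otimes\bar{\bd{x}}^k\|^2\le(\CE^k)^2$, contributing a $1\cdot(\CE^k)^2$ term to the final bound.

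The heart of the argument is bounding $\|\nabla F(\bar{\bd{x}}^k)\|^2$. The trick is to insert both the sum-gradient $\bar{g}^k=\sum_i\nabla f_i(\bd{x}_i^k)$ and the rescaled tracking iterate $z_{i^k}^k/\xi_{i^k}^{k-1}$ as intermediate points and use the three-term telescope
\begin{equation*}
\nabla F(\bar{\bd{x}}^k)=\bigl[\nabla F(\bar{\bd{x}}^k)-\bar{g}^k\bigr]+\bigl[\bar{g}^k-z_{i^k}^k/\xi_{i^k}^{k-1}\bigr]+\bigl[z_{i^k}^k/\xi_{i^k}^{k-1}\bigr],
\end{equation*}
followed by Jensen's inequality $\|a+b+c\|^2\le 3(\|a\|^2+\|b\|^2+\|c\|^2)$. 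The first piece is controlled via $L_i$-Lipschitz continuity of the $\nabla f_i$ together with the triangle inequality and Cauchy--Schwarz: writing $\nabla F(\bar{\bd{x}}^k)-\bar{g}^k=\sum_i[\nabla f_i(\bar{\bd{x}}^k)-\nabla f_i(\bd{x}_i^k)]$, one obtains $\|\nabla F(\bar{\bd{x}}^k)-\bar{g}^k\|^2\le(\sum_i L_i^2)\,\|\bd{x}^k-\bd{1}_I\otimes\bar{\bd{x}}^k\|^2\le(\sum_i L_i^2)(\CE^k)^2$. The other two summands are pure tracking residuals; using $\xi_{i^k}^{k-1}\ge\eta$ (from Lemma~\ref{lm:rate1}) together with $\bar{g}^k-z_{i^k}^k/\xi_{i^k}^{k-1}=(\xi_{i^k}^{k-1}\bar{g}^k-z_{i^k}^k)/\xi_{i^k}^{k-1}$ gives $\|\bar{g}^k-z_{i^k}^k/\xi_{i^k}^{k-1}\|^2\le(\TE^k)^2/\eta^2$ and, analogously, $\|z_{i^k}^k/\xi_{i^k}^{k-1}\|^2\le(\NT^k)^2/\eta^2$. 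Multiplying each by the factor $3$ coming from Jensen produces exactly the $3\eta^{-2}$ coefficient in \eqref{lim:square:MF}.

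Summing the two pieces leaves a constant of the form $3\sum_i L_i^2+1$ multiplying $(\CE^k)^2$, which is then absorbed into $C_3=3C_L^2 I+3L^2/I+6C_L L+4$ by the worst-case estimate $\sum_i L_i^2\le \min(IC_L^2,\,L C_L)$ together with the identity $(C_L\sqrt{I}+L/\sqrt{I})^2=C_L^2 I+2C_L L+L^2/I$, which yields an AM-GM compatible split accounting for the cross term $6C_L L$ and the $L^2/I$ contribution; the residual additive slack is packaged into the $+4$ constant. The argument is technically shallow, so the only obstacle worth flagging is the need to use the \emph{three}-term telescope rather than a naive two-way split $\nabla F(\bar{\bd{x}}^k)=(\nabla F(\bar{\bd{x}}^k)-\bar{g}^k)+\bar{g}^k$, which would propagate an extra factor of $2$ and end up with $4\eta^{-2}$ (or worse) instead of the tight $3\eta^{-2}$ on the tracking errors used downstream in the small-gain analysis of Section~\ref{sec:theory}.
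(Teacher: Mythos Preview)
Your argument is correct and in fact yields a tighter constant than the paper's, which you then (correctly, if a bit verbosely) absorb into $C_3$. The route differs from the paper's in two respects. First, for the consensus piece the paper inserts $\bd{1}\CV^k$ between $\bd{x}^k$ and $\bd{1}\bar{x}^k$ via the triangle inequality and the identity $\bd{1}\bar{x}^k-\bd{1}\CV^k=\bd{J}(\bd{x}^k-\bd{1}\CV^k)$, picking up a factor of $4$; you instead use the minimality of $\bar{x}^k$ as the projection onto the consensus line to get $\|\bd{x}^k-\bd{1}\bar{x}^k\|\le\|\bd{x}^k-\bd{1}\CV^k\|\le\CE^k$ directly, contributing only $1\cdot(\CE^k)^2$. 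Second, for the gradient piece the paper first replaces $\nabla F(\bar{x}^k)$ by $\nabla F(\CV^k)$ at the cost of $L|\bar{x}^k-\CV^k|\le (L/\sqrt{I})\,\CE^k$ and only then telescopes through $\bar{g}^k$ and $z_{i^k}^k/\xi_{i^k}^{k-1}$, producing the coefficient $C_L\sqrt{I}+L/\sqrt{I}$ whose square gives the $3C_L^2 I+6C_L L+3L^2/I$ portion of $C_3$; you telescope directly from $\nabla F(\bar{x}^k)$ to $\bar{g}^k$ using Cauchy--Schwarz, arriving at $3\sum_i L_i^2$. Your final absorption $3\sum_i L_i^2+1\le 3IC_L^2+4\le C_3$ is immediate from $L_i\le C_L$; the identity $(C_L\sqrt{I}+L/\sqrt{I})^2=C_L^2 I+2C_L L+L^2/I$ you mention is really the paper's computation, not something your own route needs. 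What the paper's detour through $\CV^k$ buys is that the intermediate bound \eqref{eq:merit_II} is reused elsewhere in the analysis (it mirrors the decomposition in Proposition~\ref{lm:error_bounds}); what your route buys is a cleaner constant and a shorter proof of the lemma itself.
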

\begin{proof}
Define $\bd{J} \triangleq ({1}/{I})\cdot \bd{1} \bd{1}^{\top}$ and $\bar{x}^k\triangleq ({1}/{I})\cdot\bd{1}^{\top}\bx^k$; and recall the definition of $\xi_{i}^{k}$ (cf. Lemma \ref{lm:rate1}) and that  $\CV^{k+1} =  \CV^{k} - \gamma^k \psi_{i^k}^k z_{i^k}^k.$ [cf. \eqref{dyz}].  We have 
\begin{align}
  & M_F(\bx^k)  \leq  \abs{\nabla F(\bar{x}^k) }^2 + \norm{ \bd{x}^k- \bd{1} \bar{x}^k}^2\\
& \leq  \abs{\nabla F(\bar{x}^k) }^2 + 2 \norm{ \bd{x}^k- \bd{1}\CV^k}^2 + 2 \norm{\bd{1}\CV^k - \bd{1} \bar{x}^k}^2 \\
&\leq   \abs{\nabla F(\bar{x}^k) }^2 + 2 \norm{ \bd{x}^k- \bd{1}\CV^k}^2 + 2 \norm{\bd{J} \left(  \bd{1} \CV^k - \bd{x}^k \right)}^2 \nonumber\\
\label{eq:merit_I}
& \leq  \abs{\nabla F(\bar{x}^k) }^2 + 4 \norm{ \bd{x}^k- \bd{1}\CV^k}^2 . 
\end{align}
We bound now $ \abs{\nabla F(\bar{x}^k) }$; we have 
\begin{equation}\label{eq:merit_II} \begin{aligned}
 & \abs{\nabla F(\bar{x}^k)} \leq   
   \abs{\nabla F(\CV^k)} +  L \abs{\bar{x}^k - \CV^k }\\
& \leq  \abs{\nabla F(\CV^k) - \bar{g}^k}   + \abs{\bar{g}^k  - (\xi_{i^k}^{k-1})^{-1}z_{i^k}^k} + (\xi_{i^k}^{k-1})^{-1} \abs{z_{i^k}^k} \\
& \quad  + \frac{L}{\sqrt{I}} \norm{\bd{J} \left( \bd{x}^k - \bd{1} \CV^k \right)}\\
& \leq  
 \left( C_L \sqrt{I} + \frac{L}{\sqrt{I}}  \right) \,\CE^k + \eta^{-1} \,\TE^k   + \eta^{-1}\, \NT^k,
\end{aligned}\end{equation}
where in the last inequality we used $\xi_{i^k}^{k}\geq \eta$ for all $k$ (cf. Lemma \ref{lm:rate1}) and $\|\bd{J}  ( \bd{x}^k - \bd{1} \CV^k )\|\leq \CE^k$.

Eq. (\ref{lim:square:MF}) follows readily from (\ref{eq:merit_I}) and (\ref{eq:merit_II}).  
 \end{proof}

Our ultimate goal is to show that  the RHS of \eqref{lim:square:MF} is summable.   To do so, we need two further results,  Proposition \ref{eq:descent} and Lemma \ref{lm:sqauresum} below.   Proposition \ref{eq:descent}  establishes a connection between   $F(\CV^{k+1})$ and 
$\CE^k$, $\TE^k$, and $\NT^k$. \vspace{-0.1cm}
 
 \begin{proposition}\label{eq:descent} In the above setting, there holds:   $k\in \mathbb{N}_0$,\vspace{-0.2cm}
 \begin{equation}\label{eq:lya}
\begin{aligned}
&  F(\CV^{k+1}) \leq  F(\CV^0) + \frac{1}{2} \left( L + \alpha^{-1} + \beta^{-1}  \right) \sum_{t=0}^k ({\NT^t})^2 ({\gamma^t})^2 \\
& -  \lbpsi  \sum_{t=0}^k ({\NT^t})^2 \gamma^t    +  \frac{\alpha}{2} C_L^2 I \sum_{t=0}^k ({\CE^t})^2+   \frac{\beta}{2}  \eta^{-2} \sum_{t=0}^k ({\TE^t})^2, \\
\end{aligned}
\end{equation} where $\alpha$ and  $\beta$ are two arbitrary  positive constants.  \vspace{-0.1cm}
\end{proposition}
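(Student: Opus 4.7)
\textbf{Plan for the proof of Proposition \ref{eq:descent}.} My plan is a one-step descent lemma in combination with Young's inequality, followed by telescoping. Start from the $L$-smoothness of $F$ (recall $F$ is $L$-Lipschitz differentiable with $L=\sum_i L_i$) applied at $\CV^k$ and $\CV^{k+1}$. Using the recursion $\CV^{k+1}=\CV^k-\gamma^k\psi_{i^k}^k z_{i^k}^k$ from \eqref{dyz} and the bound $\psi_{i^k}^k\le 1$, this produces
\begin{equation*}
F(\CV^{k+1})\le F(\CV^k)-\gamma^k\psi_{i^k}^k z_{i^k}^k\nabla F(\CV^k)+\tfrac{L(\gamma^k)^2}{2}(\NT^k)^2.
\end{equation*}

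The key step is extracting a $-\lbpsi\gamma^k(\NT^k)^2$ term from the inner-product, along with two error terms to be bounded via $\CE^k$ and $\TE^k$. To do this, I would rewrite
\begin{equation*}
\nabla F(\CV^k)=\frac{z_{i^k}^k}{\xi_{i^k}^{k-1}}+\bigl[\nabla F(\CV^k)-\bar g^k\bigr]+\Bigl[\bar g^k-\frac{z_{i^k}^k}{\xi_{i^k}^{k-1}}\Bigr].
\end{equation*}
Substituting and using $\psi_{i^k}^k/\xi_{i^k}^{k-1}\ge \lbpsi$ (since $\psi_{i^k}^k\ge\lbpsi$ by Lemma~\ref{lm:Wscramb} and $\xi_{i^k}^{k-1}\le 1$ from Lemma~\ref{lm:rate1}), the dominant term becomes $-\gamma^k\lbpsi(\NT^k)^2$. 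For the two residuals, I bound $|\nabla F(\CV^k)-\bar g^k|\le C_L\sqrt{I}\,\CE^k$ via Lipschitzness of each $\nabla f_i$ plus Cauchy--Schwarz, and $|\bar g^k-z_{i^k}^k/\xi_{i^k}^{k-1}|\le \TE^k/\eta$ using $\xi_{i^k}^{k-1}\ge\eta$ and the definition of $\TE^k$; both computations already appear in the proof of Proposition~\ref{lm:error_bounds}.

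Next, I apply Young's inequality $|xy|\le x^2/(2\mu)+\mu y^2/2$ with $x=\gamma^k z_{i^k}^k$ to the two cross-terms, choosing $\mu=\alpha$ for the $\CE^k$-piece and $\mu=\beta$ for the $\TE^k$-piece. This generates exactly the coefficients $\tfrac{\alpha}{2}C_L^2 I$ in front of $(\CE^k)^2$ and $\tfrac{\beta}{2}\eta^{-2}$ in front of $(\TE^k)^2$, while contributing $\tfrac{(\gamma^k)^2}{2}(\alpha^{-1}+\beta^{-1})(\NT^k)^2$ which, together with the $\tfrac{L(\gamma^k)^2}{2}(\NT^k)^2$ term from smoothness, produces the prefactor $\tfrac{1}{2}(L+\alpha^{-1}+\beta^{-1})$ appearing in \eqref{eq:lya}. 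Collecting terms yields the one-step inequality
\begin{equation*}
F(\CV^{k+1})-F(\CV^k)\le -\lbpsi\gamma^k(\NT^k)^2+\tfrac{1}{2}(L+\alpha^{-1}+\beta^{-1})(\gamma^k)^2(\NT^k)^2+\tfrac{\alpha}{2}C_L^2 I(\CE^k)^2+\tfrac{\beta}{2}\eta^{-2}(\TE^k)^2,
\end{equation*}
and telescoping from $t=0$ to $k$ gives \eqref{eq:lya}.

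There is no real obstacle beyond careful bookkeeping of the $\psi$/$\xi$ factors; the only delicate point is justifying that $\psi_{i^k}^k/\xi_{i^k}^{k-1}\ge\lbpsi$ (rather than $\lbpsi^2$), which is what allows the clean linear coefficient $\lbpsi$ in the negative dominant term. Everything else follows from Lipschitz smoothness, the already-derived residual bounds, and a standard Young's inequality split.
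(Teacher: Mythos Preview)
Your proposal is correct and follows essentially the same route as the paper: descent lemma at $\CV^k$ using the recursion $\CV^{k+1}=\CV^k-\gamma^k\psi_{i^k}^k z_{i^k}^k$, the three-term decomposition of $\nabla F(\CV^k)$ via $\bar g^k$ and $(\xi_{i^k}^{k-1})^{-1}z_{i^k}^k$, the bounds $\psi_{i^k}^k\le 1$ and $\psi_{i^k}^k/\xi_{i^k}^{k-1}\ge\lbpsi$, Young's inequality with parameters $\alpha,\beta$, and a final telescoping. The ``delicate point'' you flag about obtaining $\lbpsi$ rather than $\lbpsi^2$ is exactly the one the paper uses, and your justification (stochasticity of $\bm\xi^{k-1}$ so $\xi_{i^k}^{k-1}\le 1$, together with $\psi_{i^k}^k\ge\lbpsi$) is the right one.
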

\begin{proof}

By descent lemma, we get \vspace{-0.1cm}
\begin{align*}
 & F(\CV^{k+1}) \leq  \\
& F(\CV^k) + \gamma^k {\psi}_{i^k}^k\inn{\nabla F(\CV^k)}{-{z}_{i^k}^k}    + \frac{L(\gamma^k\psi_{i^k}^k)^2}{2} \abs{z_{i^k}^k}^2 \\%
 &\leq  F(\CV^k) + \frac{L{\gamma^k}^2}{2} \abs{z_{i^k}^k}^2 + \gamma^k \psi_{i^k}^k\inn{(\xi_{i^k}^{k-1})^{-1} z_{i^k}^k }{-z_{i^k}^k}  \\
&  + \gamma^k\psi_{i^k}^k \inn{\nabla F(\CV^k)-\avg{g}^k}{-z_{i^k}^k}   \\
& + \gamma^k\psi_{i^k}^k \inn{\avg{g}^k-(\xi_{i^k}^{k-1})^{-1} z_{i^k}^k }{-z_{i^k}^k} \\%
& \leq  F(\CV^k) + \frac{L{\gamma^k}^2}{2} \abs{z_{i^k}^k}^2 - \gamma^k \lbpsi  \abs{z_{i^k}^k}^2   \\
& + \gamma^k  C_L \sum_{j=1}^I  \abs{\CV^k-x_j^k}\abs{z_{i^k}^k}  + \gamma^k \eta^{-1} \TE^k \abs{z_{i^k}^k} \\& \leq  F(\CV^k) + \frac{L{\gamma^k}^2}{2} \abs{z_{i^k}^k}^2 - \gamma^k \lbpsi  \abs{z_{i^k}^k}^2   \\
 & +  \gamma^k C_L  \sqrt{I} \CE^k \abs{z_{i^k}^k}  + \gamma^k \eta^{-1} \TE^k \abs{z_{i^k}^k}  \\&  \leq  F(\CV^k) + \frac{L{\gamma^k}^2}{2} \abs{z_{i^k}^k}^2 - \gamma^k \lbpsi  \abs{z_{i^k}^k}^2   +  \frac{\alpha}{2} C_L^2 I  ({\CE^k})^2\\  
& +  \frac{\alpha^{-1}}{2}  \abs{z_{i^k}^k}^2 {\gamma^k}^2  + \frac{\beta}{2}  \eta^{-2} ({\TE^k})^2 + \frac{\beta^{-1} }{2} \abs{z_{i^k}^k}^2 ({\gamma^k})^2 \\
&  \leq   F(\CV^k) + \frac{1}{2} \left( L + \alpha^{-1} + \beta^{-1}  \right) ({\NT^k })^2 ({\gamma^k})^2  \\
& -  \lbpsi  ({\NT^k })^2 \gamma^k +  \frac{\alpha}{2} C_L^2 I  ({\CE^k})^2  +   \frac{\beta}{2}  \eta^{-2} ({ \TE^k})^2.  
\end{align*}\nonumber
 
Applying the above inequality inductively  one gets   \eqref{eq:lya}.
\end{proof}
 
The last result we need is a bound of    $ \sum_{t=0}^k ({\CE^t})^2$ and $\sum_{t=0}^k ({\TE^t})^2$ in (\ref{eq:lya}) in terms of $\sum_{t=0}^k ({\NT^t})^2 ({\gamma^t})^2 $.  
\begin{lemma}\label{lm:sqauresum}
Define \vspace{-0.2cm}$$\varrho_{\bd{c}} \triangleq \frac{2C_2^2}{(1-\rho)^2}\quad \text{and}\quad \varrho_{\bd{t}} \triangleq \frac{36 \left( C_0 C_L  \right)^2 \left(2 C_2^2 + (1-\rho)^2 \right)}{(1-\rho)^4}.$$ 
The following holds:   $k \in \mathbb{N}$,\vspace{-0.2cm}\begin{equation}\label{lim:square:TE}
\begin{aligned}
  & \sum_{t=0}^k ({\CE^t})^2 \leq c_{\bd{c}} + \varrho_{\bd{c}} \sum_{t=0}^{k} ({\NT^t})^2 {(\gamma^t})^2, \\
  & \sum_{t=0}^k ({\TE^t})^2 \leq   c_{\bd{t}} + \varrho_{\bd{t}} \sum_{t=0}^{k}  ({\NT^t})^2 {(\gamma^t})^2,
\end{aligned}
\end{equation}
where  $c_{\bd{c}}$ and $c_{\bd{t}}$ are some positive constants. 
\end{lemma}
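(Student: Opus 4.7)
The two bounds in \eqref{lim:square:TE} follow by squaring the recursions for $\CE^{k+1}$ and $\TE^{k+1}$ in Proposition~\ref{lm:error_bounds}, applying Cauchy--Schwarz to handle the geometric convolutions, swapping the order of summation, and finally substituting the bound on $\sum (\CE^t)^2$ into the bound for $\sum(\TE^t)^2$. The computation is mechanical; the only care needed is in tracking constants to match the stated $\varrho_{\mathbf c}$ and $\varrho_{\mathbf t}$.

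\emph{Bound on $\sum (\CE^t)^2$.} Starting from \eqref{lm:inter_eq1}, I would use $(a+b)^2 \leq 2a^2+2b^2$ together with the Cauchy--Schwarz inequality in the form
\begin{equation*}
\Big(\sum_{l=0}^k \rho^{k-l}\,\gamma^l \NT^l\Big)^2
 \leq \Big(\sum_{l=0}^k \rho^{k-l}\Big)\Big(\sum_{l=0}^k \rho^{k-l} (\gamma^l \NT^l)^2\Big)
 \leq \frac{1}{1-\rho}\sum_{l=0}^k \rho^{k-l} (\gamma^l \NT^l)^2.
\end{equation*}
Summing over $k=0,\ldots, N-1$ and swapping the order of summation gives
$\sum_{l=0}^{N-1}(\gamma^l\NT^l)^2\sum_{k\geq l}\rho^{k-l}\leq (1-\rho)^{-1}\sum_{l=0}^{N-1}(\gamma^l\NT^l)^2$, so the double sum contributes a factor $(1-\rho)^{-2}$. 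The geometric term $\sum_k \rho^{2k}(\CE^0)^2$ is summable and is absorbed into an absolute constant $c_{\mathbf c}$. This yields the first inequality with $\varrho_{\mathbf c}=2C_2^2/(1-\rho)^2$, matching the statement.

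\emph{Bound on $\sum(\TE^t)^2$.} Squaring \eqref{lm:inter_eq2} and using $(a+b+c)^2 \leq 2a^2+4b^2+4c^2$ (or a two-step $2(\cdot)^2$ argument), the $\rho^k\|\mathbf g^0\|$ term gives a summable geometric contribution. For the convolution term I apply Cauchy--Schwarz exactly as above to each of the two pieces $\CE^l$ and $\gamma^l\NT^l$ separately, then swap summation order, producing a factor $(1-\rho)^{-2}$ and yielding
\begin{equation*}
\sum_{t=0}^{k}(\TE^t)^2 \leq \tilde c + \frac{18(C_0C_L)^2}{(1-\rho)^2}\Big(\sum_{l=0}^{k}(\CE^l)^2 + \sum_{l=0}^{k}(\gamma^l\NT^l)^2\Big).
\end{equation*}
Then I substitute the bound on $\sum(\CE^l)^2$ just derived. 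Collecting the coefficient of $\sum_{l=0}^{k}(\gamma^l\NT^l)^2$,
\begin{equation*}
\frac{18(C_0C_L)^2}{(1-\rho)^2}\Big(1+\varrho_{\mathbf c}\Big)
=\frac{18(C_0C_L)^2\big((1-\rho)^2+2C_2^2\big)}{(1-\rho)^4},
\end{equation*}
and after adjusting the constant prefactor (the $2$ coming from $(a+b)^2 \leq 2a^2+2b^2$ applied before Cauchy--Schwarz), the coefficient becomes $36(C_0C_L)^2\big(2C_2^2+(1-\rho)^2\big)/(1-\rho)^4=\varrho_{\mathbf t}$. The residual terms involving $\CE^0$, $\|\mathbf g^0\|$, $\TE^0$, and $c_{\mathbf c}$ form the constant $c_{\mathbf t}$.

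\emph{Expected difficulty.} The argument is entirely a bookkeeping exercise; there is no real obstacle beyond keeping the constants straight when moving from the $\CE$-bound to the $\TE$-bound and making sure the Cauchy--Schwarz step is applied consistently so that the square of a geometric convolution of length $k+1$ really does collapse to the form $(1-\rho)^{-1}\sum \rho^{k-l}(\cdot)^2$. No new ideas are needed beyond those already used in Propositions~\ref{lm:PAC} and~\ref{lm:error_bounds}.
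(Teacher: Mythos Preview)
Your proposal is correct and takes essentially the same approach as the paper, which simply packages the ``square, Cauchy--Schwarz, swap summation'' step as a separate auxiliary result (Lemma~\ref{lm:lihua}) and applies it to the recursions \eqref{lm:inter_eq1}--\eqref{lm:inter_eq2}. Your intermediate constant $18$ is a minor bookkeeping slip---the split $(a+b+c)^2\le 2a^2+4b^2+4c^2$ already produces $4\cdot 9=36$ before Cauchy--Schwarz, so no further factor of $2$ is needed---but you arrive at the stated $\varrho_{\mathbf t}$ in the end.
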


\begin{proof}
	The proof follows from Proposition \ref{lm:error_bounds}  and Lemma \ref{lm:lihua} below, which is a variant  of \cite{xu2015augmented} (its proof is thus omitted).
 \begin{lemma}\label{lm:lihua} Let $\{u^k\}_{k=0}^{\infty}, \{v^k_i\}_{k=0}^{\infty}, i = 1,\ldots, m$, be nonnegative sequences;   $\lambda \in (0,1)$; and        $R_0\in \mathbb{R}_+$ such that \vspace{-0.1cm}\begin{equation*}
u^{k+1} \leq R \lambda^{k} + \sum_{l=0}^{k} \lambda^{k-l} v^l.\vspace{-0.1cm}\end{equation*} 
   \vspace{-0.1cm}
 Then,  there holds:   $k\in \mathbb{N},$ \vspace{-0.1cm} $$\sum_{l=0}^{k} ({u^l})^2 \leq ({u^{0}})^2 + \frac{2 R^2}{1-\lambda^2}  + \frac{2}{(1-\lambda)^2} \sum_{l=0}^{k}  ({v^l})^2.$$
\end{lemma}
\end{proof}
Using \eqref{lim:square:TE} in  \eqref{eq:lya}, we   finally obtain\vspace{-0.2cm}
\begin{equation}\label{eq:lyapunov_eq1}
\sum_{t=0}^k ({\NT^t})^2 \gamma^t  ( \lbpsi -   {\gamma^t} C_4(\alpha,\beta) )\leq      F(\CV^0)  - F^{\inf} + C_5(\alpha,\beta)
\end{equation}
with 
$C_4(\alpha,\beta)\triangleq  ({1}/{2})\,(L + \alpha^{-1} + \beta^{-1}   + C_L^2 I \alpha \varrho_{\bd{c}}    + \eta^{-2} \beta \varrho_{\bd{t}})$ and 
 $C_5(\alpha,\beta)= ({1}/{2}) \left( C_L^2 I \alpha c_{\bd{c}}  +    \eta^{-2} \beta c_{\bd{t}} \right)$; and $F^{\inf}>-\infty$ is the lower bound of $F$.

We are now ready to prove Theorems ~\ref{thm:sublinear_const} and \ref{thm:dimi_sublinear}.

\subsection{Proof of Theorem~\ref{thm:sublinear_const}}
Set $\gamma^k \equiv \gamma$, for all $k\in \mathbb{N}_0.$ By \eqref{eq:lyapunov_eq1}, one infers that  $\sum_{t=0}^{\infty} {\NT^t}^2 <\infty$ if  $\gamma$ satisfies $0 < \gamma < \bar{\gamma}_2 (\alpha,\beta)$, with 
$\bar{\gamma}_2 (\alpha,\beta) \triangleq   \lbpsi / C_4(\alpha,\beta). $
 Note that $\bar{\gamma}_2(\alpha,\beta)$ is maximized setting  $\alpha=\alpha^\star=\left( C_L \sqrt{I \varrho_{\bd{c}}} \right)^{-1}$ and $\beta=\beta^\star=\eta \varrho_{\bd{t}}^{-{1}/{2}}$, resulting in   
 \begin{equation}\label{eq:gamma_2}
 \bar{\gamma}_2 (\alpha^\star,\beta^\star) =  (2 \lbpsi) /({ L + 2 C_L \sqrt{I \varrho_{\bd{c}}} + 2 \eta^{-1} \sqrt{\varrho_{\bd{t}}} }).
 \end{equation}

Let $0 < \gamma < \bar{\gamma}_2 (\alpha^\star,\beta^\star) .$  Given $\delta>0$,  let $T_{\delta}$ be the first iteration $k\in \mathbb{N}_0$ such that $M_F (\bx^k)  \leq \delta$.
Then we have 
\begin{equation*}
\begin{aligned}
&  T_{\delta} \cdot \delta   < \sum_{k=0}^{T_{\delta}-1} M_F(\bx^k)  \leq  \sum_{k=0}^{\infty} M_F(\bx^k) \\
& \overset{\eqref{lim:square:MF}}{\leq}   C_3 \sum_{k=0}^{\infty} ({\CE^k})^2  + 3 \eta^{-2} \sum_{k=0}^{\infty} \left( ({\TE^k})^2   + ({\NT^k})^2 \right)   \\
&  \!\!\!\overset{\eqref{lim:square:TE},\eqref{eq:lyapunov_eq1}}{\leq}  \frac{F(\CV^0) - F^{\inf} +  C_5(\alpha^\star,\beta^\star) }{ \gamma   ( \lbpsi -   {\gamma} C_4(\alpha^\star,\beta^\star) )}   \cdot C_6 + C_7 <\infty  
     \end{aligned}
\end{equation*}
where $C_6\triangleq   C_3 \varrho_{\bd{c}} (\gamma)^2 + 3 \eta^{-2} \left( \varrho_{\bd{t}} (\gamma)^2 +1 \right)     $ and $C_7$ is some constant.
  Therefore,   $T_\delta = \mathcal{O} (1/\delta)$.

\subsection{Proof of Theorem \ref{thm:dimi_sublinear}.}
 
We begin showing that the  step-size sequence $\{\gamma^t\}_{t\in \mathbb{N}_0}$ induced by the local step-size sequence $\{\alpha^t\}_{t\in \mathbb{N}_0}$ and the asynchrony mechanism satisfying  Assumption~\ref{ass:delays}  is  nonsummable. The proof is straightforward and is thus omitted.  \begin{lemma}\label{Lemma_gamma_global}
  Let $\{\gamma^t\}_{t\in \mathbb{N}_0}$ be the global step-size sequence resulted from   Algorithm~\ref{alg:AsyTracking}, under Assumption~\ref{ass:delays}. Then, there hold:    $  \lim_{t \to \infty}\gamma^t = 0$ and   $\sum_{t=0}^\infty \gamma^t =\infty$. \end{lemma}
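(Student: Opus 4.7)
My plan is to exploit Assumption~\ref{ass:delays}(a)---every agent wakes up at least once in every window of $T$ consecutive global iterations---to tie the global step-size $\gamma^k$ to the local iteration counters of the agents. For each $i \in \mathcal{V}$ and $k \in \mathbb{N}_0$, let $N_i^k \triangleq |\{t \in \mathbb{N}_0 : t < k,\, i^t = i\}|$ denote the number of activations of agent $i$ during global iterations $0, 1, \ldots, k-1$. Whenever agent $i$ wakes up at global iteration $k$, by the description of the uncoordinated diminishing step-size rule it draws $\gamma^k = \alpha^{N_i^k}$ from its own local sequence. Assumption~\ref{ass:delays}(a) immediately yields $N_i^k \geq \lfloor k/T \rfloor$ for every $i \in \mathcal{V}$.

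To prove $\gamma^t \to 0$: using the monotonicity of $\{\alpha^t\}_{t\in \mathbb{N}_0}$ together with the activation bound, I would write $\gamma^k = \alpha^{N_{i^k}^k} \leq \alpha^{\lfloor k/T \rfloor}$, and since $\alpha^t \downarrow 0$ the right-hand side vanishes as $k \to \infty$.

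To prove $\sum_{t=0}^\infty \gamma^t = \infty$: I would partition the global step-sizes by the identity of the activated agent, obtaining
\[\sum_{t=0}^{k-1} \gamma^t \;=\; \sum_{i=1}^I \sum_{s=0}^{N_i^k - 1} \alpha^s \;\geq\; \sum_{s=0}^{\lfloor k/T \rfloor - 1} \alpha^s,\]
where the inequality follows from $N_i^k \geq \lfloor k/T \rfloor$ applied to any single agent (keeping just one of the $I$ inner sums). Letting $k \to \infty$ and invoking $\sum_{s=0}^\infty \alpha^s = \infty$ delivers the claim.

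Both arguments amount to bookkeeping once the local-to-global clock correspondence is made precise; I do not anticipate any significant obstacle. The only care required is to fix a consistent indexing convention between the global iteration counter $k$ and each agent's local counter driving the choice of $\alpha^t$, which is immediate from the algorithm description.
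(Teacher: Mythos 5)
Your proof is correct: the bound $N_i^k \geq \lfloor k/T\rfloor$ from Assumption~\ref{ass:delays}(a), combined with monotonicity of $\{\alpha^t\}$ for the vanishing claim and the agent-wise regrouping of the sum for the divergence claim, is exactly the bookkeeping argument needed. The paper itself omits the proof of this lemma as ``straightforward,'' and your argument is precisely the intended one, so there is nothing to compare beyond noting that you have filled in the omitted details correctly.
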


Since  $  \lim_{t \to \infty}\gamma^t = 0$,    there exists a sufficiently large $k\in \mathbb{N}$, say $\bar{k}$, such that $\lbpsi -   {\gamma^k} C_4(\alpha^\star,\beta^\star)\geq \eta/2$ for all $k>\bar{k}$. It is not difficult to check that this, together with 
   \eqref{eq:lyapunov_eq1}, yields $\sum_{k=0}^\infty ({\NT^k})^2  \gamma^k <\infty$.
   We can then write

\begin{equation}\label{eq:dimi_summable}\hspace{-0.2cm}
\begin{aligned}
& \sum_{k=0}^{\infty} M_F(\bx^k) \gamma^k  \\&\overset{\eqref{lim:square:MF}}{\leq}   C_3 \sum_{k=0}^{\infty} ({\CE^k})^2 \gamma^k    + 3 \eta^{-2} \sum_{k=0}^{\infty} \left( ({\TE^k})^2  +  ({\NT^k})^2 \right) \gamma^k   < C_8, 
\end{aligned}
\end{equation}
for some finite constant $C_8$, where in the last inequality we used \eqref{lim:square:TE}, $\sum_{k=0}^\infty ({\NT^k})^2  \gamma^k <\infty$ and $  \lim_{t \to \infty}\gamma^t = 0$.

    Let $N_{\delta} \triangleq \inf  \big\{ k\in \mathbb{N}_0  \,: \,\sum_{ t =0}^{k} \gamma^t \geq C_8/\delta \big\}$. Note that $N_{\delta}$   exists, as $\sum_{ k =0}^{\infty} \gamma^k =\infty$ (cf. Lemma \ref{Lemma_gamma_global}). Let  $T_{\delta} \triangleq \inf  \big\{ k\in \mathbb{N}_0  \,: M_F(\bx^k)  \leq \delta\big\}$. 
    It must be $T_{\delta} \leq N_{\delta}$. 
   In fact, suppose by contradiction  that   $T_{\delta} > N_{\delta}$; and thus  $M_F(\bx^{k}) > \delta$, for  $0 \leq {k} \leq N_{\delta}$.  It would imply $\sum_{k=0}^{N_{\delta}} M_F(\bx^k) \gamma^k > \delta \sum_{k=0}^{N_{\delta}} \gamma^k  \geq \delta \cdot (C_8/ \delta) = C_8,$
which contradicts \eqref{eq:dimi_summable}. This proves (\ref{eq:sublinear-rate}). 

\vspace{-0.2cm}

 \section{Conclusions} \label{sec:conclude} 

We proposed ASY-SONATA, a distributed asynchronous algorithmic framework for convex and nonconvex (unconstrained, smooth) multi-agent problems, over digraphs. The algorithm is robust against uncoordinated agents' activation and (communication/computation) (time-varying) delays.  When employing a constant step-size, ASY-SONATA achieves a linear rate for strongly convex objectives--matching  the rate of a centralized gradient algorithm--and  sublinear rate for    (non)convex problems.  Sublinear  rate is also established when  agents employ    uncoordinated diminishing step-sizes, which is more realistic in a distributed setting.  
To the best of our knowledge,  ASY-SONATA is  the first distributed algorithm enjoying the above properties, in the general   asynchronous setting described in the paper. 
\vspace{-0.2cm}

\appendix

\subsection{Proof of Lemma~\ref{lm:scramb}}\label{pf:Mscramb}
We study any entry $\eM_{hm}^{k+K_1-1}$ with $m\in \widehat{\mathcal{V}}$ and $h\in \mathcal{V}$.  We prove the result by considering the following four cases.

\noindent \textbf{(i)} Assume $h=m\in \mathcal{V}.$  It is easy to check that $\eM_{hh}^k \geq \lbm$, for any $k\in \mathbb{N}_0$ and  $h\in \mathcal{V}$. Therefore,  $\eM_{hh}^{k+s-1:k} \geq \prod_{t=k}^{k+s-1} \eM_{hh}^{t} \geq \lbm^s$, for all $k,s \in \mathbb{N}_0$ and   $h\in \mathcal{V}$.

\noindent \textbf{(ii)} Let $(m,h) \in \mathcal{E}$; and let $s$ be the first time     $m$ wakes up in the  interval $[k,k+T-1]$. We have
$
\eM_{(m,h)^0, m}^{s}  = a_{hm}.
$
The information that node $m$ sent to node $(m,h)^0$ at iteration $s$ is received by node $h$ when the information is on some virtual node $(m,h)^d$.  We discuss separately the following three sub-cases for $d$: 1) $1\leq d \leq D-1$; 2) $d=0$; and 3) $d=D$.

\noindent {\it 1) $1\leq d \leq D-1$}: We have
\begin{equation*}
\begin{aligned}
&  \eM_{(m,h)^d, (m,h)^0}^{s+d : s+1} =   \eM_{(m,h)^d, (m,h)^{d-1}}^{s+d} \cdots  \eM_{(m,h)^1, (m,h)^0}^{s+1}  =  1,\\
&  \eM_{h,(m,h)^d}^{s+d+1} =  a_{hh}.
\end{aligned}
\end{equation*}

\noindent Therefore,
 $\eM_{hm}^{s+d+1:s} = \eM_{h,(m,h)^d}^{s+d+1} \eM_{(m,h)^d, (m,h)^0}^{s+d : s+1}$  $\eM_{(m,h)^0, m}^{s} $
 $= a_{hh} a_{hm} \geq \lbm^2$.

\noindent {\it 2) $d=0$}: We simply have   
\begin{equation*}
\begin{aligned}
\eM_{hm}^{s+1:s} = \eM_{h,(m,h)^0}^{s+1} \eM_{(m,h)^0, m}^{s} = a_{hh} a_{hm} \geq \lbm^2.
\end{aligned}
\end{equation*}
Therefore, for $0\leq d\leq D-1$, 
\begin{equation*}
\begin{aligned}
& \eM_{hm}^{k+2T+D-1:k}  = \eM_{hh}^{k+2T+D-1:s+d+2} \eM_{hm}^{s+d+1:s} \eM_{mm}^{s-1:k}  \\
& \geq \lbm^{k+2T+D - s - d -2} \lbm^{2} \lbm^{s-k}  \geq \lbm^{2T+D}.
\end{aligned}
\end{equation*}

\noindent {\it 3) $d=D$}:  Before agent $j$ wakes up at time $s+D+\tau$, where $1\leq \tau \leq T$, the information will stay on virtual nodes $(m,h)^D.$  Once agent $j$ wakes up, nodes $(m,h)^D$ will send all its information to it.  Then we have 
\begin{equation*}
\begin{aligned}
 \eM_{(m,h)^D, (m,h)^0}^{s+D : s+1} =  1,  \quad \eM_{h, (m,h)^D}^{s+D+\tau  : s+D+1} =    a_{hh}. 
\end{aligned}
\end{equation*}
Similarly, we have  
\begin{equation*}
\begin{aligned}
& \eM_{hm}^{k+2T+D-1}  =  \eM_{hh}^{k+2T+D-1: s+D+\tau +1}\eM_{h, (m,h)^D}^{s+D+\tau  : s+D+1}  \\
& \cdot  \eM_{(m,h)^D, (m,h)^0}^{s+D : s+1}  \eM_{(m,h)^0, m}^{s} \eM_{mm}^{s-1:k}  
 \geq  \lbm^{2T+D}.
\end{aligned}
\end{equation*}
To summarize,  in all of the three sub-cases, we have
\begin{equation*}
\begin{aligned}
&  \eM_{hm}^{k+K_1-1} \geq \eM_{hh}^{k+K_1-1: k+2T+D} \eM_{hm}^{k+2T+D-1:k}  \\
& \geq \lbm^{K_1-2T-D} \lbm^{2T+D} = \lbm^{K_1 }.
\end{aligned}
\end{equation*}

\noindent \textbf{(iii)} Let $m \neq h $ and $(m,h) \in \mathcal{V} \times \mathcal{V} \setminus \mathcal{E}$.  Since the graph $(\mathcal{V},\mathcal{E})$ is connected, there are mutually different agents $i_1,\ldots,i_r$, with $r\leq I-2$, such that $(m,i_1),(i_1,i_2),\ldots,(i_{r-1},i_{r}),(i_r,h)\subset \mathcal{E},$
which is actually a directed path from $m$ to $h$ in the graph $(\mathcal{V},\mathcal{E})$.  Then, by result proved in (ii), we have
\begin{equation*}
\begin{aligned}
& \eM_{hm}^{k+(I-1)(2T+D)-1:k} \geq   \eM_{hh}^{k+(I-1)(2T+D)-1:k+(r+1)(2T+D)}  \\  
&\cdot \eM_{hi_r}^{k+(r+1)(2T+D)-1:k+r(2T+D)}    \cdots  \eM_{i_1 m}^{k+2T+D-1:k} \\
 &\geq  \lbm^{(I-r-2)(2T+D)} \lbm^{(r+1)(2T+D)} =  \lbm^{(I-1)(2T+D)}.
\end{aligned}
\end{equation*}
We can then  easily get  
$ \eM_{hm}^{k+K_1-1:k}    =  \eM_{hh}^{k+K_1-1: k+(I-1)(2T+D)} \eM_{hm}^{k+(I-1)(2T+D)-1:k} $ $  \geq 
  \lbm^{K_1}$.

\noindent \textbf{(iv)} If $m$ is a virtual node, it must be associated with an edge $(j,i) \in \mathcal{E}$ and there exists $0\leq d \leq D$ such that $m = (j,i)^d.$  
A similar argument as in (ii) above  shows that any information on $m$ will eventually enter node $i$ taking   $1\leq \tau \leq D+T$.  That is,    
$
\eM_{im}^{k+\tau-1:k}  =a_{ii},
$ for some $1\leq \tau \leq D+T$.
On the other hand, by the above results, we know 
\begin{equation*}
\begin{aligned}
\eM_{hi}^{k+T+D+(I-1)(2T+D)-1:k+T+D}  \geq \lbm^{(I-1)(2T+D)}.
\end{aligned}
\end{equation*}
Therefore,
\begin{equation*}
\begin{aligned}
& \eM_{hm}^{k+K_1-1:k} 
\geq  \eM_{hi}^{k+K_1-1:k+T+D}    \eM_{ii}^{k+T+D-1:k+\tau}  \eM_{im}^{k+\tau-1:k} \\
&  \geq \lbm^{(I-1)(2T+D)} \lbm^{T+D-\tau} \lbm \geq  \lbm^{K_1}
\end{aligned}
\end{equation*} \qed\vspace{-0.4cm}

\subsection{Proof of Lemma~\ref{conv_lm}}\label{pf:conv}
Fix $N\in \mathbb{N}$, and let $k$ such that $1 \leq k+1 \leq N$. We have:  
\begin{equation*}
\begin{aligned}
  \frac{u^{k+1}}{\lambda^{k+1}} &\leq  \frac{R_0 }{\lambda} \left( \frac{\lambda_0}{\lambda} \right)^k + \sum_{i = 1}^m \frac{R_i}{\lambda}  \sum_{l = 0}^k \left( \frac{\lambda_i}{\lambda} \right)^{k-l} \frac{v_i^l}{\lambda^l} \\
& 					\leq \frac{R_0 }{\lambda}  +  \sum_{i = 1}^m \frac{R_i}{\lambda}  \abs{v_i }^{\lambda,N} \sum_{l = 0}^k \left( \frac{\lambda_i}{\lambda} \right)^{k-l}    \\
&\leq \frac{R_0 }{\lambda}  +   \sum_{i = 1}^m \frac{R_i}{\lambda-\lambda_i}  \abs{v_i}^{\lambda,N}.
\end{aligned}\vspace{-0.3cm}
\end{equation*}
Hence, \vspace{-0.3cm}
 
\begin{align*}
   \abs{u}^{\lambda,N} 
& 			\leq  \max\left( u_0 , \frac{R_0 }{\lambda}  +   \sum_{i = 1}^m \frac{R_i}{\lambda-\lambda_i}  \abs{v_i}^{\lambda,N}\right) \\
& 			\leq   u^0 +  \frac{R_0 }{\lambda}  +   \sum_{i = 1}^m \frac{R_i}{\lambda-\lambda_i}  \abs{v_i}^{\lambda,N}.\qed
\end{align*} \vspace{-0.6cm}

\subsection{Proof of Theorem~\ref{thm:SGT}}\label{pf:sgt}
From   \cite[Ch. 5.6]{horn1990matrix}, we know that if $\rho(\bd{T})<1,$ then $\lim_{k\to \infty} \bd{T}^k =0$, the series $\sum_{k=0}^{\infty} \bd{T}^k$ converges (wherein we define $\bd{T}^0 \triangleq \bd{I}$), $\bd{I}-\bd{T}$ is invertible and $\sum_{k=0}^{\infty} \bd{T}^k =  (\bd{I}-\bd{T})^{-1}.$

Given $N\in \mathbb{N}$, using  \eqref{eq:SGT}  recursively,  yields: 
$\bd{u}^{\lambda, N} \leq  \bd{T} \bd{u}^{\lambda, N} + \bm{\beta}  \leq   \bd{T} \left( \bd{T} \bd{u}^{\lambda, N} + \bm{\beta} \right) + \bm{\beta} 
 =  \bd{T}^2 \bd{u}^{\lambda, N} + \left (\bd{T} + \bd{I} \right) \bm{\beta} \leq 
  \cdots  
 \leq  \bd{T}^{\ell} \bd{u}^{\lambda, N} + \sum_{k=0}^{\ell -1} \bd{T}^k \bm{\beta},$
for any   $\ell\in   \mathbb{N}$.  Let $\ell \to \infty$, we get $\bd{u}^{\lambda, N} \leq (\bd{I}-\bd{T})^{-1} \bm{\beta}.$  Since  this holds for any given   $N\in \mathbb{N}$,   we have 
$\bd{u}^{\lambda} \leq (\bd{I}-\bd{T})^{-1} \bm{\beta}.$ Hence, $\bd{u}^{\lambda}$ is bounded, and thus  each $u_i^k$ vanishes at an   R-linear rate $\mathcal{O}(\lambda^k)$.\qed\vspace{-0.3cm}

\subsection{Proof of the rate decay \eqref{eq:rate-expression} (Theorem \ref{thm:linear_const}) }\label{sec:linear_2}
 
Let   
$\lambda \geq \mathcal{L}(\gamma) + \epsilon \gamma$,
with  $\epsilon>0$ to be properly chosen.  Then,  
\begin{equation}\label{eq:suff_lambda}
\begin{aligned}
& \mathfrak{B}(\lambda;\gamma)  \leq   \left( 1+ \frac{L }{\epsilon}  \right) \frac{b_2 \gamma}{\lambda-\rho} \\ 
& + \left(   \left( 1+ \frac{L }{\epsilon}  \right) \frac{b_2}{\lambda-\rho} + b_1 + \frac{L b_2 }{ \epsilon }  \right)  \frac{C_2 \gamma}{\lambda- \rho} .
\end{aligned}
\end{equation}
Using  $\lambda - \rho <1$, a sufficient condition for Eq.~\eqref{eq:rate_func} is  [RHS less than one]
\begin{equation}\label{eq:TR_assp2}
 \left( b_1 C_2  + \frac{L b_2 C_2  }{ \epsilon }    + \left( 1+ \frac{L }{\epsilon}  \right) b_2 (1 + C_2) \right) \gamma \leq \left( \lambda - \rho \right)^2.
\end{equation}
Now set  $\epsilon =  ({\tau \lbpsi^2})/{2}$.  Since the RHS of the above inequality can be arbitrarily close to $(1-\rho)^2$,  an upper bound of $\gamma$ is 
\begin{align*}
& \hat{\gamma}_2 \triangleq \\
& (1-\rho)^2 \bigg/  \underbrace{\left( b_1 C_2  + \frac{2 L b_2 C_2  }{ \tau \lbpsi^2 }    + \left( 1+ \frac{2 L }{\tau \lbpsi^2}  \right) b_2 (1 + C_2)  \right) } _{\triangleq J_1} .
\end{align*}
According to $\lambda \geq \mathcal{L}(\gamma) + \epsilon \gamma$ and \eqref{eq:TR_assp2}, we get 
\begin{align}
\lambda = \max \left(1 -  \frac{\tau \lbpsi^2 \gamma}{2}, \quad \rho + \sqrt{ J_1 \gamma} \right).
\end{align}
Notice that when $\gamma$ goes from $0$ to $ \hat{\gamma}_2$, the first argument inside the $\max$ operator decreases from $1$ to $1 -   ({\tau \lbpsi^2 \hat{\gamma}_2})/{2}$, while the second argument increases from $\rho$ to $1$.  Letting 
$1 -  \frac{\tau \lbpsi^2 \gamma}{2} =   \rho + \sqrt{ J_1 \gamma} ,$
we get the solution as $\hat{\gamma}_1 = \left(\frac{\sqrt{J_1 + 2 \tau\eta^2 (1-\rho)} - \sqrt{J_1}}{\tau\eta^2} \right)^2.$  The expression of   $\lambda$ as in \eqref{eq:rate-expression} follows readily.\qed 

\subsection{Proof of Corollary~\ref{cor:signal}}\label{pf:signal}
We know that $\mathfrak{m}^{k}_z  =   \sum_{i=1}^I {{z}_i^0} + \sum_{t=0}^{k-1} \epsilon^t .$  Clearly $\mathfrak{m}^0_z = \sum_{i=1}^I {{z}_i^0} =  \sum_{i = 1}^I \tilde{{u}}_{i}^{0}.$  Suppose for $k = \ell$, we have that $\mathfrak{m}^\ell_z =  \sum_{i = 1}^I \tilde{{u}}_{i}^\ell.$  Then we have that
\begin{align*}
&\mathfrak{m}^{\ell +1}_z =  \mathfrak{m}^{\ell }_z + \epsilon^\ell = \left( \sum_{i = 1}^I \tilde{{u}}_{i}^\ell \right) + {u}_{i^\ell}^{\ell+1} - \tilde{{u}}_{i^\ell}^{\ell}  \\
 & =  \sum_{j \neq i^\ell} \tilde{{u}}_{j}^\ell +  {u}_{i^\ell}^{\ell+1} = \sum_{i = 1}^I \tilde{{u}}_{i}^{\ell+1}.
\end{align*}
Thus we have that $\mathfrak{m}_z^{k}= \sum_{i = 1}^I \tilde{{u}}_{i}^{k}, \quad \forall k \in \mathbb{N}_0.$

Now we assume that $\lim_{k\to\infty} \sum_{i=1}^I \abs{{u}_i^{k+1}-{u}_i^{k}} = 0.$  Notice that for $k\geq T,$
\begin{align*}
\abs{\epsilon^k} = & \abs{{u}_{i^k}^{k+1} - \tilde{{u}}_{i^k}^{k}} \leq \sum_{t=k-T+1}^k \abs{{u}_{i^k}^{t+1} - {u}_{i^k}^{t}} \\
 \leq & \sum_{t=k-T+1}^k \sum_{i=1}^I \abs{{u}_{i}^{t+1} - {u}_{i}^{t}}.
\end{align*}
Therefore we have that $\lim_{k\to\infty} \abs{\epsilon^k} = 0.$  According to Theorem~\ref{thm:track} and \cite[Lemma~7(a)]{di2016next}, we have that 
$$\lim_{k\to\infty}\abs{\tr_i\spe{k+1} -  \left( 1/I \right) \cdot \sum_{i = 1}^I \tilde{{u}}_{i}^{k+1} } = 0.$$
On the other hand, we have that
\begin{align*}
& \abs{\sum_{i = 1}^I {u}_{i}^{k+1}-\sum_{i = 1}^I \tilde{{u}}_{i}^{k+1} } \leq \sum_{i = 1}^I \abs{{u}_{i}^{k+1} - \tilde{{u}}_{i}^{k+1} } \\
& \leq \sum_{i = 1}^I \sum_{t=k-T+1}^k   \abs{{u}_{i}^{t+1} - {u}_{i}^{t}} \overset{k\to\infty}{\longrightarrow} 0.
\end{align*}
By triangle inequality, we get that
$$\lim_{k\to\infty}\abs{\tr_i\spe{k+1} -  \left( 1/I \right) \cdot \sum_{i = 1}^I {u}_{i}^{k+1} } = 0.$$
\qed

\subsection{Proof of Lemma~\ref{lm:polynomial}}\label{pf:polynomial}
``$\Longleftarrow:$'' From $p(1)>0,$ we know that $\sum_{i=1}^m a_i <1.$  We prove by contradiction.  Suppose there is a root $z_*$ of $p(z)$ satisfying $\abs{z_*} \geq 1$, then we have
$$z_*^m = a_1 z_*^{m-1} + a_2 z_*^{m-2} + \ldots + a_{m-1} z_* + a_m.$$
Clearly $z_* \neq 0$, so equivalently
$$1 = a_1 \frac{1}{z_*} + a_2 \frac{1}{z_*^2} + \ldots + a_{m-1} \frac{1}{z_*^{m-1}} + a_m \frac{1}{z_*^{m}}.$$
Further,
\begin{align*}
& 1 =  \left | a_1 \frac{1}{z_*} + a_2 \frac{1}{z_*^2} + \ldots + a_{m-1} \frac{1}{z_*^{m-1}} + a_m \frac{1}{z_*^{m}} \right| \\
&\leq  a_1 \frac{1}{\abs{z_*}} + a_2 \frac{1}{\abs{z_*}^2} + \ldots + a_{m-1} \frac{1}{\abs{z_*}^{m-1}} + a_m \frac{1}{\abs{z_*}^{m}}  \\
&\leq  a_1 + a_2 + \ldots + a_{m-1} + a_m \\
&<  1.
\end{align*}
This is a contradiction.

``$\Longrightarrow:$'' If $p(1) = 0,$ we clearly have that $z_p \geq 1.$  Now suppose $p(1) < 0.$  Because $\lim_{z\in \mathbb{R}, z\to +\infty} p(z) = + \infty$ and $p(z)$ is continuous on $\mathbb{R}$, we know that $p(z) $ has a zero in $(1,+\infty) \subset \mathbb{R}.$  Thus $z_p > 1.$
\qed

\subsection{Proof of Lemma~\ref{lm:Wscramb}}\label{pf:wscramb}

We interpret the dynamical system \eqref{eq:chi} over an augmented graph.  We begin constructing the augmented graph obtained adding virtual nodes to the original graph $\mathcal{G}=(\mathcal{V},\mathcal{E})$.  We associate each node $i \in \mathcal{V}$ with an ordered set of virtual nodes $i[0],i[1],\ldots, i[D]$; see Fig.~\ref{fig:static}.  We still call the nodes in the original graph $G$ as \textit{computing agents} and the virtual nodes as \textit{noncomputing agents}.  We now identify the neighbors of each agent in this augmented system.  Any noncomputing agent $i[d]$, $d = D, D-1, \cdots, 1$, can only receive information from the previous virtual node $i[d-1]$; $i[0]$ can only receive information from the real node $i$ or simply keep its value unchanged; computing agents cannot communicate among themselves.

\begin{figure}[H]
\centering
\includegraphics[scale=0.3]{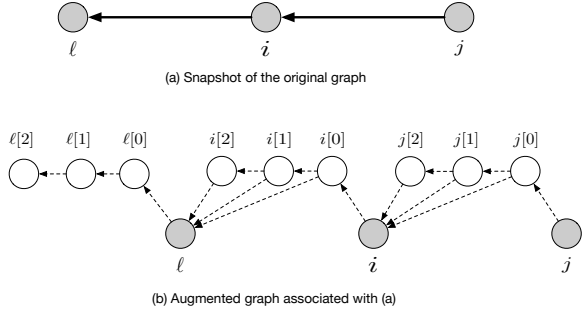}
\caption{Example of augmented graph, when the maximum delay is $D=2$; three noncomputing agents are added for each node $i\in \mathcal{V}$.}
\label{fig:static}
\end{figure}


At the beginning of each iteration $k$, every computing agent $i\in \mathcal{V}$ will store the information $x_i^k$;  whereas every noncomputing agent $i[d]$, with $d = 0,1,\cdots,D,$ will store the delayed information $v_i^{k-d}$.  The dynamics over the augmented graph happening in iteration $k$ is described by \eqref{eq:chi}.  In words, any noncomputing agent $i[d]$ with $i\in \mathcal{V}$ and $d = D,D-1,\cdots,1$ receives the information from $i[d-1]$; the noncomputing agent $i^k[0]$ receives the perturbed information $x_{i^k}^k + \delta^k$ from node $i^k$; the values of noncomputing agents $j[0]$ for $j \in \mathcal{V} \setminus \{i^k\}$ remain the same; node $i^k$ sets its new value as a weighted average of the perturbed information $x_{i^k}^k + \delta^k$  and $v_{j}^{k - d_{j}^k}$'s received from the virtual nodes $j[d_{j}^k]$'s for $ j\in \mathcal{N}_{i^k}^{\text{in}};$  and the values of the other computing agents remain the same.  The dynamics is further illustrated in Fig.~\ref{fig:active}.  The following Lemma shows that the product of a sufficiently large number of any instantiations of the matrix $\W^k$, under Assumption~\ref{ass:delays}, is a scrambling matrix.

\begin{figure}[H]
\centering
\includegraphics[scale=0.3]{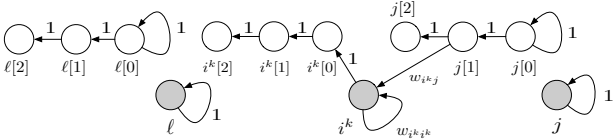}
\caption{The dynamics in iteration $k.$  Agent $i^k$ uses the delayed information $v_j^{k-1}$ from the virtual node $j[1].$}
\label{fig:active}
\end{figure}

\begin{lemma}\label{lm:Wscrambling}
Let $\{\W^k \}_{k\in \mathbb{N}_0}$ be the sequence of augmented matrices generated according to the dynamical system \eqref{eq:PAC}, under Assumption~\ref{ass:delays}, and with   $\mathbf{W}$ satisfying Assumption~\ref{assumption_weights} (i), (ii).  Then, for any $k\in \mathbb{N}_0$, $\W^k$ is row stochastic and $\W^{k+K_1-1:k}$ has the property that all entries of its first $I$ columns are uniformly lower bounded by $\eta $.
\end{lemma}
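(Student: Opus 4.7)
The plan is to verify row-stochasticity of each $\W^k$ directly from its definition, and then establish the lower bound on the first $I$ columns of $\W^{k+K_1-1:k}$ by exhibiting explicit positive ``influence paths'' through the time-expanded augmented graph described just above the lemma. Recall that computing agents sit at positions $1,\ldots,I$ of the state vector while each computing agent $i\in\mathcal{V}$ owns noncomputing delay-buffer nodes $i[0],\ldots,i[D]$ occupying positions $I+i,\,2I+i,\,\ldots,\,(D+1)I+i$.

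For the first claim I would do a case analysis on the row index $r$ of $\W^k$. For $r=i^k$ the nonzero entries are $w_{i^k i^k}$ at column $i^k$ and $w_{i^k j}$ at columns $j+(d_j^k+1)I$ for $j\in\mathcal{N}_{i^k}^{\mathrm{in}}$, which sum to $1$ by Assumption~\ref{assumption_weights}(ii). For $r=i^k+I$ there is a single entry $1$ at column $i^k$, so that $v_{i^k}^{k+1}=x_{i^k}^{k}+\delta^k$ is read off from position $i^k$ of $\augX^k+\augY^k$. All remaining rows have exactly one nonzero entry equal to $1$, encoding either the buffer shift $v_j^{k-d}\mapsto v_j^{k-d-1}$ or the fact that $x_j$ and $v_j[0]$ are unchanged when $j\neq i^k$.

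For the column lower bound, since $\W^t\succeq 0$ for every $t$, the weight of any single positive path in the time-expanded graph from column $m$ at time $k$ to row $r$ at time $k+K_1$ is a pointwise lower bound on $(\W^{k+K_1-1:k})_{rm}$. I will construct such a path of weight at least $\eta\triangleq\bar m^{K_1}$ for every $m\in\{1,\ldots,I\}$ and every $r\in\{1,\ldots,(D+2)I\}$, invoking: each computing agent activates at least once per window of length $T$ (Assumption~\ref{ass:delays}(a)); delays satisfy $d_j^k\leq D$ (Assumption~\ref{ass:delays}(b)); $\mathcal{G}$ has diameter at most $I-1$ by Assumption~\ref{ass:strong_conn}; and every in-edge and diagonal weight of $\W$ is at least $\bar m$ (Assumption~\ref{assumption_weights}(i)). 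A primitive hop $a\to b$ along an edge $(a,b)\in\mathcal{E}$ costs at most $2T+D$ iterations: the first activation of $a$ in the initial $T$-window copies $x_a$ into $v_a$, and hence into $a[0]$, with unit weight; the value then shifts rightward through $a[1],\ldots,a[D]$ one position per iteration; and the first activation of $b$ in the ensuing $T$-window reads it off some $a[d]$ with coefficient $w_{ba}\geq\bar m$. Chaining at most $I-1$ such hops along a $\mathcal{G}$-path from $m$ to any $r\in\mathcal{V}$ costs at most $(I-1)(2T+D)$ iterations; reaching a virtual destination $i[d]$ costs $T+D$ extra iterations (trigger $i$, then propagate through its buffer). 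Summing gives $(I-1)(2T+D)+T+D=(2I-1)T+ID=K_1$, with any remaining slack absorbed by self-loops of weight $\geq\bar m$. Since at most $K_1$ nontrivial factors multiply along the path, its weight is at least $\bar m^{K_1}=\eta$.

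The hard part will be the timing bookkeeping: one must verify that $K_1$ iterations are uniformly sufficient for \emph{every} adversarial activation and delay schedule compatible with Assumption~\ref{ass:delays}, and that whichever value $v_a^{t_b-d_a^{t_b}}$ actually enters $b$'s update at its activation time $t_b$ is indexed by some earlier activation of $a$ that still carries a $\bar m$-weighted dependency on $x_m^k$ through self-loops of $a$. This is precisely what dictates the $D$-padding within each hop. Once the timing constraints are checked, the path exists and the uniform lower bound $(\W^{k+K_1-1:k})_{rm}\geq\eta$ follows from nonnegativity of each $\W^t$.
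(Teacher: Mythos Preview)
Your proposal is correct and follows essentially the same approach as the paper: row-stochasticity by direct inspection of the defining cases of $\W^k$, and the column lower bound by exhibiting explicit nonnegative paths in the time-expanded augmented graph, organized into the same case split ($h=m\in\mathcal V$; $(m,h)\in\mathcal E$; $h\in\mathcal V$ reachable from $m$ via a $\mathcal G$-path of length $\le I-1$; $h$ a virtual node), with each primitive edge-hop costing $2T+D$ iterations and the final virtual-node leg costing $T+D$. The one refinement the paper makes over your sketch is precisely the ``hard part'' you flag: for the primitive hop it reasons \emph{backward} from the receiver's activation time $s$ and the realized delay $d=d_m^s$ (so the path is forced through $m[d]$ with $d$ chosen by the adversary), then locates the \emph{last} activation of $m$ in $[s-d-T,\,s-d-1]$ to connect $m[0]$ back to $m$---this backward construction is what guarantees the path exists for every admissible schedule and cleanly yields the $\bar m^{2T+D}$ bound per hop.
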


\begin{proof}
We study any entry $\eW_{hm}^{k+K_1-1}$ with $m\in \mathcal{V}$ and $h\in \widehat{\mathcal{V}}$.  We prove the result by considering the following four cases.

\noindent \textbf{(i)} Assume $h=m\in \mathcal{V}.$  Since $\eW_{hh}^k \geq \lbm$ for any $k\in \mathbb{N}_0$ and any $h\in \mathcal{V}$, we have $\eW_{hh}^{k+s-1:k} \geq \prod_{t=k}^{k+s-1} \eW_{hh}^{t} \geq \lbm^s$ for $\forall \, k \in \mathbb{N}_0, \,\forall \, s \in \mathbb{N}$ and $\forall \, h\in \mathcal{V}$.

\noindent \textbf{(ii)} Assume that $(m,h) \in \mathcal{E}.$  Suppose that the first time when agent $h$ wakes up during the time interval $[k + T + D,k + 2T + D-1]$ is $s$, and agent $h$ uses the information $v_m^{s-d}$ from the noncomputing agent $m[d].$  Then we have
$$
\eW_{h,m[0]}^{s:s-d}  \geq  \eW_{h,m[d]}^{s}  \cdots  \eW_{m[1],m[0]}^{s-d}  = w_{hm} \geq \lbm^{d+1}.
$$
Then suppose that the last time when agent $m$ wakes up during the time interval $[s-d-T,s-d-1]$ is $s-d-t$.  The noncomputing agent $m[0]$ receives some perturbed information from agent $m$ at iteration $s-d-t$ and then performs self-loop (i.e., keep its value unchanged) during the time interval $[s-d-t+1,s-d-1]$.  Thus we have 
$$
 \eW_{m[0],m}^{s-d-1:s-d-t} = \eW_{m[0],m[0]}^{s-d-1:s-d-t+1} \eW_{m[0],m}^{s-d-t}  =  1\cdot 1 \geq \lbm^{t-1}.
$$
Therefore we have 
\begin{align*}
& \eW_{hm}^{k + 2T + D-1:k}  \geq  \eW_{hh}^{k + 2T + D-1:s+1} \eW_{h,m[0]}^{s:s-d}  \\
&\cdot  \eW_{m[0],m}^{s-d-1:s-d-t}  \eW_{mm}^{s-d-t-1:k} \\
& \geq  \lbm^{k+2T+D-s-1} \lbm^{d+1} \lbm^{t-1} \lbm^{s-d-t-k}  \geq \lbm^{2T+D}.
\end{align*}
Further we have 
\begin{align*}
& \eW_{hm}^{k+K_1-1} \geq \eW_{hh}^{k+K_1-1: k+2T+D} \eW_{hm}^{k+2T+D-1:k}  \\
& \geq \lbm^{K_1-2T-D} \lbm^{2T+D} = \lbm^{K_1 }.
\end{align*}

\noindent \textbf{(iii)} Assume that $m \neq h $ and $(m,h) \in \mathcal{V} \times \mathcal{V} \setminus \mathcal{E}$.  Because the graph $(\mathcal{V},\mathcal{E})$ is connected, there are mutually different agents $i_1,\ldots,i_r$ with $r\leq I-2$ such that $$(m,i_1),(i_1,i_2),\ldots,(i_{r-1},i_{r}),(i_r,h)\subset \mathcal{E},$$
which is actually a directed path from $m$ to $h$.  Then, by result proved in (ii), we have
\begin{equation*}
\begin{aligned}
& \eW_{hm}^{k+(I-1)(2T+D)-1:k} 							=  \eW_{hh}^{k+(I-1)(2T+D)-1:k+(r+1)(2T+D)}  \\
							  & \eW_{hi_r}^{k+(r+1)(2T+D)-1:k+r(2T+D)}\cdots  \eW_{i_2 i_1}^{k+2(2T+D)-1:k+2T+D}  \\
							  & \cdot \eW_{i_1 m}^{k+2T+D-1:k} \\
& \geq  \lbm^{(I-r-2)(2T+D)} \lbm^{(r+1)(2T+D)}=   \lbm^{(I-1)(2T+D)}.
\end{aligned}
\end{equation*}
Then we can easily get
\begin{equation*}
\begin{aligned}
& \eW_{hm}^{k+K_1-1:k}  =  \eW_{hh}^{k+K_1-1: k+(I-1)(2T+D)}  \eW_{hm}^{k+(I-1)(2T+D)-1:k} \\
& \geq  \lbm^{K_1-(I-1)(2T+D)} \lbm^{(I-1)(2T+D)} = \lbm^{K_1}.
\end{aligned}
\end{equation*}

\noindent \textbf{(iv)} If $h$ is a noncomputing node, it must be affiliated with a computing agent $j \in \mathcal{V}$, i.e., there exists $0\leq d \leq D$ such that $h = j[d].$  Then we have 
$$\eW_{h,j[0]}^{k+K_1-1:k+K_1-d} = \eW_{j[d],j[d-1]}^{k+K_1 -1}   \cdots \eW_{j[1],j[0]}^{k+K_1 -d}  = 1.$$
Suppose that the last time when agent $j$ wakes up during the time interval $[k+K_1 -d - T, k+K_1 -d - 1]$ is $s$.  We have
$$\eW_{j[0],j}^{k+K_1 -d -1 : s} = \eW_{j[0],j[0]}^{k+K_1 -d -1 : s+1} \eW_{j[0],j}^{s} = 1.$$
By results proved before, we have
\begin{align*}
 &  \eW_{hm}^{k+K_1  -1 : k} \geq \eW_{h,j[0]}^{k+K_1  -1 : k+K_1  -d} \eW_{j[0],j}^{k+K_1 -d -1 : s}  \\
& \cdot \eW_{jj}^{s-1 : k+ (I-1)(2T+D)} \eW_{jm}^{k+ (I-1)(2T+D)-1:k}  \\
& \geq  1 \cdot 1 \cdot \lbm^{s-k- (I-1)(2T+D)} \lbm^{(I-1)(2T+D)} \geq \lbm^{K_1}.
\end{align*}
\end{proof}
Based on Lemma~\ref{lm:Wscrambling},  we get the following result according to the discussion in \cite{nedic2010convergence}.

\begin{lemma}
In the setting above, there exists a sequence of stochastic vectors $\{\bm{\psi}^k\}_{k\in \mathbb{N}_0}$ such that for any $k \geq t\geq 0$,
$$\norm{\W^{k:t} -\bd{1}{\bm{\psi}^t}^{\top}}_{\infty} \leq \frac{2  (1+ \lbm^{-K_1})}{1- \lbm^{-K_1}} \rho^{k-t}.$$ 
Furthermore, $\psi_i^k\geq \lbpsi = \lbm^{K_1}$ for all $k\geq 0$ and  $i\in \mathcal{V}$.
\end{lemma}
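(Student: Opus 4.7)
\textbf{Proof plan for Lemma~\ref{lm:scramb}.}
The plan is to handle the two assertions separately: (a) is a short algebraic verification, while (b) requires a case analysis that constructs, for each pair $(m,h)\in \widehat{\mathcal V}\times \mathcal V$, an explicit path of length $K_1$ in the augmented graph along which the entries of the $\M^t$'s multiply to at least $\lbm^{K_1}$.

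For part (a), I will show that both $\C^k$ and $\Sd^k$ are column stochastic, so that their product $\M^k=\Sd^k\C^k$ is column stochastic as well. For $\C^k$, inspection of the definition shows that each column is a canonical basis vector: columns indexed by $(j,i^k)^d$ with $k-\tau_{i^kj}^k\le d\le D$ have a single unit entry in row $i^k$, while every other column has a single unit entry on its diagonal. For $\Sd^k$, the only nontrivial column is that of $m=i^k$, whose nonzero entries are $a_{i^ki^k}$ on the diagonal and $a_{ji^k}$ in each row $(j,i^k)^0$, $j\in \mathcal{N}_{i^k}^{\text{out}}$; these sum to $1$ by Assumption~\ref{assumption_weights}(iii). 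All other columns of $\Sd^k$ are plainly canonical, so column stochasticity follows.

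For part (b), I will fix $k$ and show that every entry $(\M^{k+K_1-1:k})_{hm}$ with $h\in\mathcal V$ and $m\in\widehat{\mathcal V}$ is bounded below by $\lbm^{K_1}$ by exhibiting one admissible path from $m$ to $h$ in the time-varying augmented digraph. The key ingredients are: (i) every computing agent $i$ has $(\M^t)_{ii}\ge \lbm$ at every iteration, which provides a cheap ``waiting'' edge of weight $\ge \lbm$; (ii) by Assumption~\ref{ass:delays}(i), within any window of length $T$ every agent wakes up at least once, so fresh information gets launched and absorbed within bounded time; and (iii) by Assumption~\ref{ass:delays}(ii), the delay index $d$ encountered by an in-neighbor's message is at most $D$, so a packet traveling down the chain $(m,h)^0\to(m,h)^1\to\cdots\to(m,h)^d$ reaches $h$ within at most $T+D$ steps from its emission, with weight exactly $a_{hm}\ge \lbm$. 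I will split into four cases: (i) $h=m\in\mathcal V$ (pure self-loops, cost $\lbm^{K_1}$); (ii) $m\in\mathcal V$ with $(m,h)\in\mathcal E$, where I wait for $m$ to wake up in $[k,k+T-1]$, push along the chain $(m,h)^0,\ldots,(m,h)^d$, wait for $h$ to wake up and absorb, and pad the remaining iterations with $h$'s self-loop (the subcase $d=D$ needs an extra $T$ slack since information may accumulate at $(m,h)^D$ until $h$ wakes up), giving total cost $\ge \lbm^{2T+D}\cdot\lbm^{K_1-2T-D}=\lbm^{K_1}$; (iii) $m,h\in\mathcal V$ non-adjacent, where I chain together at most $I-2$ applications of case (ii) along a directed path in $\mathcal G$ (which exists by Assumption~\ref{ass:strong_conn}), each consuming $2T+D$ iterations, fitting in $(I-1)(2T+D)\le K_1$ steps; (iv) $m=(j,i)^d$ is a virtual node, where I first route the mass off the chain into the computing agent $i$ (needing at most $T+D$ steps) and then apply case (iii) from $i$ to $h$.

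The main obstacle I expect is bookkeeping in case (ii) when $d=D$, where the ``in-flight'' information may sit at the capacity-$D$ virtual node for an unknown amount of time before $h$ wakes up, because $(m,h)^D$ accumulates further mass through self-loops rather than being immediately passed on; this is precisely why $K_1$ must scale like $(2I-1)T+ID$ rather than $(I-1)(T+D)$. I will resolve this by bounding the extra delay by a further $T$-window (the waiting time until $h$ wakes up after the message has settled at depth $D$) and verifying that the constant $K_1=(2I-1)T+ID$ absorbs all the ``wait-plus-travel-plus-absorb'' accounting in each of the four cases simultaneously.
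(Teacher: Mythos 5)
Your plan does not address the statement you were asked to prove. What you outline---column stochasticity of $\M^k=\Sd^k\C^k$ and the uniform lower bound $\lbm^{K_1}$ on the first $I$ rows of $\M^{k+K_1-1:k}$---is Lemma~\ref{lm:scramb}, which concerns the matrices of the perturbed push-sum recursion \eqref{eq:augmented_consensus} over the \emph{edge}-based augmented graph (virtual nodes $(j,i)^d$ carrying in-flight mass, including the accumulation node $(j,i)^D$ you flag). The statement at hand concerns the matrices $\W^k$ of the perturbed consensus recursion \eqref{eq:chi}: these are \emph{row} stochastic, are not a product of a sum- and a push-matrix, and act on the \emph{node}-based augmented graph whose virtual nodes $i[d]$ store delayed copies $v_i^{k-d}$; there, old values are simply shifted down the delay line and discarded, so the $d=D$ accumulation issue that you single out as the main obstacle does not even arise. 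The paper's counterpart of your case analysis for this system is Lemma~\ref{lm:Wscrambling}, and its bookkeeping is genuinely different: one waits for the \emph{receiving} agent $h$ to wake up in a window $[k+T+D,\,k+2T+D-1]$, lets it read the delayed copy $v_m^{s-d}$ from $m[d]$ with weight $w_{hm}$, and then traces back to the last wake-up of $m$; moreover the roles of rows and columns are transposed (it is the first $I$ \emph{columns} of $\W^{k+K_1-1:k}$ that must be bounded below, not the first $I$ rows).

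More importantly, even a correctly transposed scrambling argument would only prove the analogue of Lemma~\ref{lm:Wscrambling}, not the statement. The statement asserts (i) the existence of stochastic vectors $\bm{\psi}^t$, (ii) the geometric bound $\norm{\W^{k:t}-\bd{1}{\bm{\psi}^t}^{\top}}_{\infty}\leq \frac{2(1+\lbm^{-K_1})}{1-\lbm^{-K_1}}\,\rho^{k-t}$ with $\rho=(1-\lbm^{K_1})^{1/K_1}$, and (iii) the uniform bound $\psi_i^k\geq\lbm^{K_1}$ for all computing agents $i\in\mathcal{V}$. Passing from the positivity of the blocks $\W^{k+K_1-1:k}$ to (i)--(iii) is the actual content of the lemma: in the paper this is done by invoking the standard theory of backward products of row-stochastic matrices whose $K_1$-step blocks have a column uniformly bounded away from zero (the ``discussion in \cite{nedic2010convergence}''), which yields the limit vectors $\bm{\psi}^t$, the contraction factor $\rho$, the stated constant, and, by letting $k\to\infty$ in the entrywise bound on the first $I$ columns, the lower bound on $\psi_i^k$. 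Your plan contains no step of this kind, so as written it cannot produce the limit vectors, the explicit constant and rate, or the bound $\psi_i^k\geq\lbm^{K_1}$.
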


The above result leads to Lemma~\ref{lm:Wscramb} by noticing that
$$\norm{\W^{k:t} -\bd{1}{\bm{\psi}^t}^{\top}} \leq \sqrt{(D+2)I} \norm{\W^{k:t} -\bd{1}{\bm{\psi}^t}^{\top}}_{\infty} \leq C_2 \rho^{k-t}.$$

\begin{footnotesize}
\bibliographystyle{IEEEtran}
\bibliography{reference}	
\end{footnotesize}

\end{document}